\pdfoutput=1
\documentclass[a4paper,oneside,12pt]{amsart}
\usepackage{typearea} 
\typearea{15} 

\usepackage[UKenglish]{babel}
\usepackage[T1]{fontenc}

\usepackage{amsmath}
\usepackage{amsthm}
\usepackage{amsfonts}
\usepackage{amssymb}
\usepackage[neverdecrease]{paralist}
\usepackage{enumitem}

\usepackage{dsfont}
\usepackage{ifthen}
\usepackage{twoopt}
\usepackage{mathtools}
\usepackage{mathrsfs}
\usepackage{etoolbox}
\usepackage{xparse}
\usepackage{graphicx}

\usepackage{microtype}
\usepackage[plainpages=false,colorlinks=false]{hyperref}

\theoremstyle{plain}
\newtheorem{theorem}{Theorem}[section]
\newtheorem{proposition}[theorem]{Proposition}
\newtheorem{corollary}[theorem]{Corollary}
\newtheorem{lemma}[theorem]{Lemma}
\theoremstyle{definition}
\newtheorem{example}[theorem]{Example}
\newtheorem{remark}[theorem]{Remark}
\newtheorem{definition}[theorem]{Definition}

\mathtoolsset{centercolon}

\newcommand{\mathdcl}[1]{{\ifstrequal{#1}{l}{l}{\textup{#1}}}}

\newcommand{\Borel}{\mathscr{B}}
\newcommand{\mbdy}{\partial^\mathdcl{m}}
\newcommand{\sing}{\mathdcl{s}}
\newcommand{\loc}{\mathdcl{loc}}
\newcommand{\nloc}{\mathdcl{inf}}
\newcommand{\cpt}{\mathdcl{c}}
\newcommand{\rect}{\mathdcl{r}}
\newcommand{\urect}{\mathdcl{u}}

\newcommand{\RR}{\mathbb{R}}
\newcommand{\NN}{\mathbb{N}}
\newcommand{\one}{\mathds{1}}
\newcommand{\cM}{\mathcal{M}}

\newcommand{\clos}[1]{\overline{#1}}
\newcommand{\setmid}{:}

\newcommand{\symdiff}{\mathbin{\vartriangle}}
\DeclareMathOperator{\linspan}{span}

\DeclareMathOperator{\supp}{supp}
\DeclareMathOperator{\graph}{gr}
\DeclareMathOperator{\diam}{diam}
\DeclareMathOperator{\dist}{dist}
\DeclareMathOperator{\inter}{int}
\DeclareMathOperator{\ext}{ext}

\providecommand{\wedgeq}{\mathrel{\widehat=}}

\DeclarePairedDelimiter\pdp{\lparen}{\rparen}
\makeatletter
\@ifpackageloaded{xparse}{
\DeclareDocumentCommand{\dx}{O{x}d()}{%
\,\mathrm{d}#1\IfValueTF{#2}{\pdp{#2}}{}}
}{
\newcommand{\dx}[1][x]{\,\mathrm{d}#1}
}
\makeatother

\newcommand{\eps}{\varepsilon}

\newcommand{\Cinfc}[1][\Omega]{\ensuremath{C^\infty_\cpt(#1)}}
\newcommand{\Cbar}[1][\Omega]{\ensuremath{C(\overline{#1})}}

\newcommand{\Ltwo}[1][\Omega]{\ensuremath{L^2(#1)}}
\newcommand{\Linf}[1][\Omega]{\ensuremath{L^\infty(#1)}}
\newcommand{\Loneloc}[1][\LonelocARG]{\ensuremath{L^1_\loc(#1)}}

\newcommand{\Lone}[1][\Omega]{\ensuremath{L^1(#1)}}
\newcommandtwoopt{\Lp}[2][p][\Omega]{\ensuremath{L^{#1}(#2)}}
\newcommandtwoopt{\Wt}[2][p][\Omega]{\ensuremath{\widetilde{W}^{1,#1}(#2)}}
\newcommandtwoopt{\Wone}[2][p][\Omega]{\ensuremath{W^{1,#1}(#2)}}
\newcommandtwoopt{\Wonez}[2][p][\Omega]{\ensuremath{W_0^{1,#1}(#2)}}
\newcommandtwoopt{\Wbarz}[2][p][\Omega]{\ensuremath{W^{1,#1}_0(\clos{#2})}} 
\newcommand{\WoneRd}[1][p]{\ensuremath{\Wone[#1][\RR^d]}}
\newcommand{\BV}{\ensuremath{\mathrm{BV}}}

\newcommand{\emphdef}[1]{\textbf{\boldmath #1\unboldmath}}

\makeatletter
\@ifpackageloaded{xparse}{
\DeclareDocumentCommand{\relcap}{sd!!O{p}O{\Omega}g}{%
  \operatorname{cap}_{#3,#4}%
  \IfValueTF{#5}{%
    \IfValueTF{#3}{%
      \mathopen{\csname#2\endcsname\lparen}#5\mathclose{\csname#2\endcsname\rparen}%
    }{(#5)}%
  }{}%
}
}{}
\makeatother

\newcommand{\rcap}{\relcap}

\makeatletter
\newcommand*{\math@version@bold}{bold}
\DeclareMathOperator{\trace}{\textrm{\upshape\ifx\math@version\math@version@bold\bfseries\else\fi Tr}}
\makeatother

\newcommand{\restrict}[2]{\ensuremath{#1|_{#2}}}
\newcommand{\meas}[2][Lebesgue]{
\ifthenelse{\equal{#1}{Lebesgue}}{\abs{#2}}{{#1(#2)}}
}
\newcommand{\Hm}[1][d-1]{\mathcal{H}^{#1}}

\DeclarePairedDelimiter\norm{\lVert}{\rVert}
\DeclarePairedDelimiter\abs{\lvert}{\rvert}

\makeatletter
\let\old@norm\norm

\@ifpackageloaded{xparse}{
\DeclareDocumentCommand{\smartnorm}{soG{\cdot}}{%
  \IfBooleanTF{#1}{\old@norm{#3}}%
  {%
    \IfValueTF{#2}{\old@norm[#2]{#3}}{\old@norm*{#3}}%
  }%
}
\def\norm{\smartnorm}
}{}
\makeatother

\newcommand{\set}[3][auto]{{
\ifthenelse{\equal{#1}{auto}}{\left\{#2\setmid #3\right\}}{}
\ifthenelse{\equal{#1}{b}}{\bigl\{#2\setmid #3\bigr\}}{}
\ifthenelse{\equal{#1}{B}}{\Bigl\{#2\setmid #3\Bigr\}}{}
}}

\newcommand{\n}[2][n]{$#1$\nobreakdash-\hspace{0pt}#2}

\makeatletter
\newtoks\protect@toks
\def\pdef#1{\protect@toks=\expandafter{\the\protect@toks
 \pdef@#1}%
 \def#1}
\def\pdef@#1{\def#1{\protect#1}}
\newdimen\mex
\mex=1ex
\pdef\niv{\mathrel{\hbox{\hglue .2\mex
  \vrule \@height 1.33\mex \@width .06\mex
  \vrule \@height .06\mex \@width 1\mex
  \hglue .5\mex}}}

\makeatother

\newlist{normenum}{enumerate}{1}
\setlist[normenum]{label=\textup{\arabic*.},ref=\textup{(\arabic*)},itemsep=0em,topsep=1ex} 

\newlist{arenum}{enumerate}{1}
\setlist[arenum]{label=\textup{(\arabic*)},ref=\textup{(\arabic*)},itemsep=0em,topsep=1ex}

\hyphenation{Lip-schitz Grund-lehren Wis-sen-schaf-ten Fried-richs mo-no-graphs semi-group semi-groups non-unique-ness}

\hfuzz=4pt

\graphicspath{{./figures/}}

\renewcommand{\phi}{\varphi}

\title{Uniqueness of the approximative trace}

\author[M. Sauter]{\sc Manfred Sauter}
\address{Manfred Sauter\\Institute of Applied Analysis\\Ulm University\\89069 Ulm\\Germany}
\email{manfred.sauter@uni-ulm.de}

\keywords{Sobolev spaces, boundary trace, approximative trace, relative capacity,
continuous boundary, rectifiable sets, measure theoretic boundary}
\subjclass[2010]{Primary: 46E35; Secondary: 31C15, 26B30, 28A75}

\begin{document}

\begin{abstract}
We study the \emph{approximative trace} for individual elements in the Sobolev space
$\Wone$ for $1\le p\le\infty$. This notion of a trace was introduced for $p=2$ in~\cite{AtE2011:DtN} in the setting of general open sets $\Omega\subset\RR^d$. The approximative trace exhibits a curious nonuniqueness phenomenon.
We provide a detailed analysis of this phenomenon based on methods of geometric measure theory
and are able to give very weak geometric conditions that are sufficient for the uniqueness of the approximative trace.
In particular, we prove that the approximative trace is unique on open sets with continuous boundary
and on arbitrary connected domains in $\RR^2$. Furthermore, we provide an example which shows that the uniqueness of the approximative trace depends on $p$.
These results answer several open questions.
\end{abstract}

\begingroup
\renewcommand{\MakeUppercase}[1]{#1}
\maketitle
\endgroup

\section{Introduction}

The purpose of this article is to study the approximative trace of individual elements in the Sobolev space $\Wone$ for $1\le p\le\infty$ on a general open set $\Omega\subset\RR^d$. On the boundary $\partial\Omega$ we consider the $(d-1)$-dimensional Hausdorff measure $\Hm$. If $\Omega$ is a bounded Lipschitz domain, there exists a unique bounded linear operator $\trace\colon\Wone\to L^p(\partial\Omega)$ such that $\trace u = \restrict{u}{\partial\Omega}$ for all $u\in\Wone\cap\Cbar$; the operator $\trace$ is called the \emph{trace operator} on $\Wone$.
This classical result plays a decisive role in analysis, allowing the usual calculus in the form of the divergence theorem and
Green's formulas in the setting of Sobolev functions.

In this paper, however, we consider $\Wone$ on a completely general open set $\Omega$.
It is well-known that the above trace operator does not exist for a domain with sufficiently irregular boundary.
For example, one cannot even expect integrable traces if $\Omega$ has a suitably sharp outward pointing cusp.

Still, it is natural to define approximative traces for individual elements as follows.
For simplicity, let us suppose for a moment that $\Omega$ is bounded and $\Hm(\partial\Omega)<\infty$.
Let $u\in\Wone$ and $\phi\in L^p(\partial\Omega)$. We call $\phi$ an \emph{approximative trace} of $u$ if there
exists a sequence $(u_n)$ in $\Wone\cap\Cbar$ such that $u_n\to u$ in $\Wone$ and $\restrict{u_n}{\partial\Omega}\to\phi$ in $L^p(\partial\Omega)$.
This notion turned out to be useful to treat boundary value problems, see the recent paper~\cite[Section~4]{CHK2015} for several striking applications, or the Dirichlet-to-Neumann operator on rough domains, see~\cite{AtE2011:DtN}.
These are good reasons to study the approximative trace systematically, which is the purpose of this article.

Clearly not every $u\in\Wone$ needs to have an approximative trace, 
but even the uniqueness can fail:
There exist bounded, connected open sets $\Omega\subset\RR^3$ such that the zero function in $\Wone$ has a multitude of nontrivial approximative traces; see~\cite[Example~4.3]{AW03} and~\cite[Section 3, last paragraph on p.\,941]{BG2010} for two examples. This lack of uniqueness is a most curious phenomenon and difficult to understand geometrically.

The \emph{uniqueness of the approximative trace}, i.e.\ that the zero function in $\Wone$ has only the trivial approximative trace, can be reformulated in terms of the closability of the Robin Dirichlet form. In
fact, it was in this context, inspired by an inequality due to Maz$'$ya~\cite[Corollary~2 in Section~6.11.1]{Maz2011},
 that the question of uniqueness occurred in~\cite{Dan2000:robin-bvp} and~\cite{AW03}.
While the lack of uniqueness of the approximative trace was always considered to be rare and pathological, 
up to now there has been no geometric criterion asserting uniqueness that really goes beyond Lipschitz boundary.

We list the four main contributions of this paper.
\begin{normenum}
\item If $\Omega$ has strictly positive Lebesgue density at $\Hm$-a.e.\ $z\in\partial\Omega$, then the approximative trace is unique; see Theorem~\ref{thm:sing-dens-0}.
\item If $\Omega$ has continuous boundary, then the approximative trace is unique; see Theorem~\ref{thm:cont-unique}.
\item If $\Omega\subset\RR^2$ is connected, then the approximative trace is unique; see Corollary~\ref{cor:uniq-two-dim}.
\item We present an example of a connected domain $\Omega\subset\RR^3$ where the approximative trace is not unique for small $p$, but unique for $p$ large; see~Example~\ref{ex:p-dep}. We additionally arrange that for certain values of $p$ the approximative trace is not unique even though every element of $\Wone$ has an approximative trace; see Example~\ref{ex:p-conj-ate}.
\end{normenum}
The first item constitutes a weak measure geometric criterion for the uniqueness of the approximative trace. The results in the second and third item are based on this criterion, but both of them additionally require sophisticated tools from geometric measure theory. In particular, for the third item we need a description of indecomposable sets of finite perimeter in two dimensions from~\cite{ACMM2001}.
Both the second and third items were conjectures in circulation for several years which are now confirmed.
The construction of the example is based on a uniform continuity property of functions in $\Wone$ for $p>d$ established in~\cite{BS2001}.

Despite being a recent notion, the approximative trace is already embedded in a rich theory. 
As we pointed out above, it is intimately connected to the Robin boundary value problem and the Dirichlet-to-Neumann operator. Moreover, there exists an associated notion of capacity, the \emph{relative capacity} as introduced and studied in~\cite{AW03} and~\cite{Bie09:relcap-usem,Bie09:cap-mod}.
The space of elements with approximative trace zero always lies between the spaces $\Wonez$ and $\Wbarz = \{ u\in \Wone : \text{the extension of $u$ by $0$ is in $\WoneRd$}\}$, see~\cite[Chapter~7]{MS2013:thesis}, 
and the stability of the Dirichlet problem is characterised by the coincidence of the three spaces~\cite[Theorem~9]{Hed2000:stab}.

We close this introductory section with a brief outline of the paper. 
In Section~\ref{sec:not-pre} we introduce notation and preliminary results.
We then characterise the approximative traces of the zero function in $\Wone$ in Section~\ref{sec:app-trace-zero} using lattice theory and the notion of the relative capacity.
In Section~\ref{sec:geo-cond} we prove our measure geometric criterion for the uniqueness of the approximative trace and establish uniqueness if $\Omega$ has continuous boundary.
The following Section~\ref{sec:uniq-2d} features the uniqueness of the approximative trace if $\Omega\subset\RR^2$ is connected.
Finally in Section~\ref{sec:ex-app} we present an example where the uniqueness of the approximative trace depends on $p$, point out connections to Maz$'$ya and Burago's \emph{rough trace} and hint on some applications for our results.

\section{Notation and preliminaries}\label{sec:not-pre}

If not explicitly stated otherwise, we consider a general nonempty open set $\Omega\subset\RR^d$. 
In particular, we do not assume that $\Omega$ is bounded, connected, has finite Lebesgue measure or
has a boundary with finite Lebesgue or Hausdorff measure.
We denote the topological boundary of $\Omega$ by $\Gamma$.
For convenience, the function spaces considered in the following are supposed to be real 
since lattice theoretic arguments are used later on.

We define the \emphdef{locally finite part} of $\Gamma$ by
\[
    \Gamma_\loc := \set{z\in\Gamma}{\text{there exists an $r>0$ such that $\Hm(\Gamma\cap B(z,r))<\infty$}}
\]
and set $\Gamma_\nloc := \Gamma\setminus\Gamma_\loc$.
Then $\Gamma_\loc$ is $\sigma$-compact and relatively open in $\Gamma$, and $(\Gamma_\loc,\Borel(\Gamma_\loc),\Hm)$ is a locally finite, $\sigma$-finite, Borel regular measure space. Moreover, if $d>1$ then this space is atomless by~\cite[Exercise~264\,Yg]{Frem2003:vol2}.

We denote by $\Loneloc=\Loneloc[\Gamma,\Borel(\Gamma),\Hm]$ the vector space of \emphdef{locally integrable functions} on $\Gamma$, 
where we identify functions that agree $\Hm$-a.e.\ on $\Gamma$. 
Note that if $\varphi\in\Loneloc$, then $\varphi=0$ $\Hm$-a.e.\ on $\Gamma_\nloc$.
Therefore one can naturally identify $\Loneloc$ 
with $\Loneloc[\Gamma_\loc,\Borel(\Gamma_\loc),\Hm]$. 
Moreover, if $A\subset\Gamma$ is a Borel set, then we consider $\Loneloc[A]$ as a subspace of $\Loneloc$ in the obvious way after extending functions by zero.
We equip $\Loneloc$ with the locally convex topology induced by the family of seminorms 
$\norm{\cdot}_K\colon\varphi\mapsto\norm{\varphi\one_K}_1$ for all compact $K\subset\Gamma_\loc$.
It is easily observed that $\Loneloc$ is completely metrizable, and hence a Fr\'echet space.
In the following we let $Y=\Loneloc$ if not explicitly specified otherwise.

\begin{definition}
For $1\le p\le\infty$, let $u\in\Wone$ and $\varphi\in Y$. Then $\varphi$ is called an \emphdef{approximative trace} of $u$ in $\Wone$ if
there exists a sequence $(u_n)$ in $\Wone\cap\Cbar$ such that $\restrict{u_n}{\Gamma}\in Y$ for all $n\in\NN$ and
\[
    \text{$u_n\to u\in\Wone$}\quad\text{and}\quad\text{$\restrict{u_n}{\Gamma}\to\varphi$ in $Y$.}
\]
Moreover, we define the 
set 
\[
    Z^p := \set{\varphi\in Y}{\text{$\varphi$ is an approximative trace of the zero function in $\Wone$}}.
\]
\end{definition}
In~\cite{AtE2011:DtN,Dan2000:robin-bvp,AW03} only the Hilbert space case $p=2$ 
was considered with approximative traces in the space $Y=\Ltwo[\Gamma]$. 
In~\cite{AW03}, however, also measures different from $\Hm$ were admitted on $\Gamma$.

It is easily observed that in general not every element of $\Wone$ has an approximative trace.
To this end, let us introduce the space
\[
    \Wt = \clos{\Wone\cap\Cbar},
\]
where the closure is taken in $\Wone$.
In general, $\Wt$ is a proper closed subspace of $\Wone$; see~\cite{Kolsrud1981:example} for such an example where $\Omega$ is topologically regular and~\cite[Section~1]{OFar97:sob-approx} for a discussion of related results.
Clearly only elements of $\Wt$ can possibly have an approximative trace.
Yet, if $\Omega$ has a continuous boundary, one has $\Wone=\Wt$ for all $1\le p<\infty$ due to the following result,
where the second statement follows from an inspection of the proof.
\begin{proposition}[see~{\cite[Theorem~V.4.7]{EE87}}]\label{prop:cont-smooth-dense}
Let $\Omega\subset\RR^d$ be open with continuous boundary and $1\le p<\infty$. Then $\{\restrict{u}{\Omega} : u\in\Cinfc[\RR^d]\}$ is dense in $\Wone$.
Moreover, if $u\in\Wone$ with $u\ge 0$ a.e.\ on $\Omega$, then there exists a sequence $(u_n)$ in $\Cinfc[\RR^d]$ such
that $\restrict{u_n}{\Omega}\to u$ in $\Wone$ and $u_n\ge 0$ for all $n\in\NN$.
\end{proposition}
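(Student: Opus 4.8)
The first assertion is the classical density theorem~\cite[Theorem~V.4.7]{EE87}, and my plan is to run the standard translation-and-mollification argument in a form that also makes the nonnegativity refinement transparent. Since $\Omega$ need not be bounded, I would first reduce to compactly supported data: choosing cut-offs $\chi_R\in\Cinfc[\RR^d]$ with $0\le\chi_R\le1$, $\chi_R=1$ on $B(0,R)$, and $\norm{\nabla\chi_R}_\infty\le C/R$, one has $\chi_R u\to u$ in $\Wone$, so it suffices to approximate $v:=\chi_R u$, whose support meets $\clos\Omega$ in a compact set; note that $\chi_R u\ge0$ whenever $u\ge0$. Next I would localise: using that $\Omega$ has continuous boundary, each $z\in\Gamma$ admits a neighbourhood $U_z$ in which, after a rotation of coordinates, $\Omega\cap U_z=\{(y',y_d):y_d>g_z(y')\}\cap U_z$ for some continuous $g_z$. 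Covering the compact set $\Gamma\cap\supp v$ by finitely many such $U_1,\dots,U_N$ and adjoining an interior patch $U_0$ with $\clos{U_0}\subset\Omega$, chosen so that $U_0,\dots,U_N$ cover $\clos\Omega\cap\supp v$, I take a nonnegative smooth partition of unity $\psi_0,\dots,\psi_N$ subordinate to this cover with $\sum_j\psi_j=1$ near $\clos\Omega\cap\supp v$, and decompose $v=\sum_{j=0}^N v_j$ with $v_j:=\psi_j v\in\Wone$ supported compactly in $U_j$.

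For the interior piece $v_0$ one simply mollifies: $v_0*\rho_\eps\in\Cinfc[\RR^d]$ and $v_0*\rho_\eps\to v_0$ in $\Wone$, where $\rho_\eps$ is a standard mollifier. For a boundary piece $v_j$ with $j\ge1$ I would first translate into $\Omega$ in the local coordinates of $U_j$, setting $v_j^t(y):=v_j(y+te_d)$ for small $t>0$. Because near $\supp v_j$ the set $\clos\Omega$ is the closed epigraph $\{y_d\ge g_j(y')\}$, the translate $v_j^t$ lies in $\Wone$ of the \emph{open} set $\{y_d>g_j(y')-t\}$, which contains $\clos\Omega$ there, while $v_j^t\to v_j$ in $\Wone$ as $t\to0$ by continuity of translations in $L^p$ (here $p<\infty$ is used). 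Multiplying $v_j^t$ by a cut-off equal to $1$ near $\clos\Omega\cap\supp v_j$, extending by zero to an element of $\WoneRd$ with compact support, and mollifying then produces functions in $\Cinfc[\RR^d]$ whose restrictions to $\Omega$ converge to $\restrict{v_j^t}{\Omega}$ as $\eps\to0$. Letting $\eps\to0$, then $t\to0$, and finally $R\to\infty$ along a diagonal sequence yields the required approximants of $u$ in $\Cinfc[\RR^d]$.

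The nonnegativity statement then comes essentially for free. If $u\ge0$ a.e.\ on $\Omega$, every operation in the construction preserves nonnegativity: multiplication by the nonnegative functions $\chi_R$, $\psi_j$ and the auxiliary cut-offs, translation, extension by zero, and convolution against the nonnegative mollifier $\rho_\eps$. Since the final approximant is a finite sum of such terms, it is nonnegative, and one obtains a sequence $(u_n)$ in $\Cinfc[\RR^d]$ with $\restrict{u_n}{\Omega}\to u$ in $\Wone$ and $u_n\ge0$ for all $n\in\NN$.

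The step I expect to be the real obstacle is the inward translation in the boundary charts: one must verify that, for small $t$, the shifted function is genuinely defined and of class $\Wone$ on a \emph{full open neighbourhood of $\clos\Omega$} inside the chart, so that its mollifications are smooth up to $\Gamma$ and converge in $\Wone$. This is precisely where the hypothesis of a continuous boundary enters; and it is worth noting that mere continuity of the local graphs $g_j$ is enough here, as all that is needed is the set inclusion $\{y_d\ge g_j(y')\}\subset\{y_d>g_j(y')-t\}$, not any Lipschitz-type estimate.
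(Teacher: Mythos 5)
Your argument is correct and is precisely the translation-and-mollification proof of the cited result~\cite[Theorem~V.4.7]{EE87}; the paper gives no independent proof but merely remarks that the nonnegativity refinement ``follows from an inspection of the proof'', which is exactly the observation you make (every step --- cut-off, partition of unity, inward translation, zero extension, convolution with a nonnegative mollifier --- preserves nonnegativity). Nothing further is needed.
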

Moreover, if $\Omega$ has a sufficiently sharp outward pointing cusp with tip at $0$, then
the restriction of an element of $\Wone\cap C(\clos{\Omega}\setminus\{0\})$ to the boundary does not need to be 
locally integrable at the tip of the cusp. Such an example is given in~\cite[Example~9.1]{AtE2011:DtN}.
This suggests that in such a case it would be more natural to consider weighted local integrability of the trace, where
the weight depends on the local geometry of $\Omega$ at the boundary. We will not pursue this here and
always consider the measure $\Hm$ without an additional density on $\Gamma$.

The phenomenon that is of foremost interest here is that in general the zero function in $\Wone$ may have nontrivial approximative traces; 
in other words, $Z^p$ does not need to be trivial.
If $Z^p$ is trivial, we say that the \emphdef{approximative trace is unique} in $\Wone$,
and if $Z^p$ is not trivial, then we say that the \emphdef{approximative trace is not unique} in $\Wone$.
In Figure~\ref{fig:forest} a particularly simple example of a domain in $\RR^2$ is depicted, where the approximative trace is not unique provided the size of the balls decreases sufficiently quickly towards the line segment on the left.

We describe the uniqueness of the approximative trace in $\Wone$ in another way.
Consider the operator 
\[
    T_0\colon \{ u\in\Wone\cap \Cbar : \restrict{u}{\Gamma}\in\Loneloc\}\to\Loneloc
\]
given by $T_0 u=\restrict{u}{\Gamma}$. Then the approximative trace is unique in $\Wone$ if and only if $T_0$ is a closable operator in $\Wone\times\Loneloc$.
\begin{figure}
\centering
\includegraphics{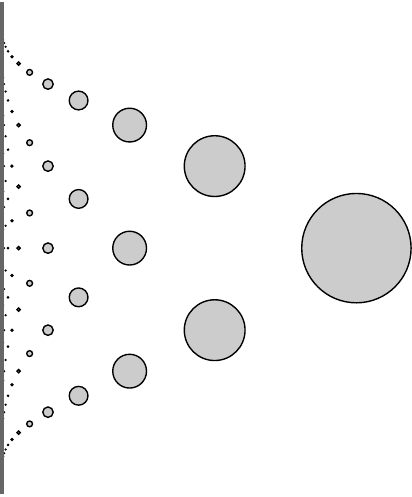}
\caption{An example of an open set $\Omega\subset\RR^2$ where the approximative trace is not unique.}
\label{fig:forest}
\end{figure}

\begin{remark}\label{rem:unique-pinf}
It is easily seen that the approximative trace is always unique in $\Wone[\infty]$. In fact, if $(u_n)$ is a sequence in $\Wone[\infty]\cap\Cbar$ that converges to $0$ in $\Wone[\infty]$, then $(u_n)$ converges uniformly to $0$ on $\clos{\Omega}$. So $Z^\infty = \{0\}$.
\end{remark}

We begin with a basic lattice theoretic property.
\begin{lemma}\label{lem:lat-approx-tr}
The set $Z^p$ is a closed vector sublattice of $\Loneloc$.
\end{lemma}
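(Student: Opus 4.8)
The plan is to verify three things in turn: that $Z^p$ is a linear subspace, that it is a sublattice, and that it is closed in $\Loneloc$. The linearity is immediate from the definition: if $\varphi,\psi\in Z^p$ with approximating sequences $(u_n)$ and $(v_n)$ in $\Wone\cap\Cbar$ having $u_n\to 0$, $v_n\to 0$ in $\Wone$ and $\restrict{u_n}{\Gamma}\to\varphi$, $\restrict{v_n}{\Gamma}\to\psi$ in $Y$, then for scalars $\alpha,\beta$ the sequence $(\alpha u_n+\beta v_n)$ lies in $\Wone\cap\Cbar$, tends to $0$ in $\Wone$, and its restriction to $\Gamma$ tends to $\alpha\varphi+\beta\psi$ in $Y$; hence $\alpha\varphi+\beta\psi\in Z^p$. (One also notes $0\in Z^p$ trivially.)

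For the lattice property it suffices, by linearity, to show that $\varphi\in Z^p$ implies $\varphi^+\in Z^p$, since $\varphi^-=(-\varphi)^+$, $|\varphi|=\varphi^++\varphi^-$, and for two elements $\varphi\vee\psi = \varphi + (\psi-\varphi)^+$ and $\varphi\wedge\psi = \varphi - (\varphi-\psi)^+$. So take $\varphi\in Z^p$ with an approximating sequence $(u_n)$ in $\Wone\cap\Cbar$ as above. The natural candidate is the sequence $(u_n^+)$: each $u_n^+$ is continuous and bounded on $\clos\Omega$, and it is a standard fact that $u\mapsto u^+$ maps $\Wone$ continuously into itself with $\|u^+\|_{W^{1,p}}\le\|u\|_{W^{1,p}}$, so $u_n^+\to 0^+=0$ in $\Wone$. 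It remains to check that $\restrict{u_n^+}{\Gamma}=(\restrict{u_n}{\Gamma})^+\to\varphi^+$ in $Y=\Loneloc$. Here one uses that the lattice operation $\psi\mapsto\psi^+$ is continuous on $\Loneloc$: for any compact $K\subset\Gamma_\loc$ one has the pointwise inequality $\bigl|\,|a|-|b|\,\bigr|\le|a-b|$, hence $\|\,(\restrict{u_n}{\Gamma})^+ - \varphi^+\,\|_K \le \|\restrict{u_n}{\Gamma}-\varphi\|_K\to 0$; one should also observe that $\restrict{u_n^+}{\Gamma}\in Y$ so that $(u_n^+)$ is an admissible approximating sequence in the sense of the definition. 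This yields $\varphi^+\in Z^p$, and the sublattice assertion follows.

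For closedness, suppose $\varphi_k\to\varphi$ in $\Loneloc$ with each $\varphi_k\in Z^p$. Since $\Loneloc$ is metrizable (as noted in the excerpt), a diagonal argument will do: for each $k$ pick $u_k\in\Wone\cap\Cbar$ with $\restrict{u_k}{\Gamma}\in Y$, $\|u_k\|_{W^{1,p}}\le 1/k$, and $\restrict{u_k}{\Gamma}$ within distance $1/k$ of $\varphi_k$ in the metric of $\Loneloc$ (possible because $\varphi_k\in Z^p$ means it is a limit of such restrictions with the $\Wone$-norms of the corresponding functions tending to $0$). Then $u_k\to 0$ in $\Wone$ and $\restrict{u_k}{\Gamma}\to\varphi$ in $\Loneloc$ by the triangle inequality, so $\varphi\in Z^p$.

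The routine parts are the continuity of $u\mapsto u^+$ on $\Wone$ and of $\psi\mapsto\psi^+$ on $\Loneloc$, together with the diagonal extraction; none of these presents a genuine difficulty. The only point that requires a small amount of care — and is the closest thing to an obstacle — is confirming that the truncated functions $u_n^+$ still satisfy the membership condition $\restrict{u_n^+}{\Gamma}\in Y$ built into the definition of approximative trace, so that $(u_n^+)$ is a legitimate competitor sequence; this is immediate here since $(\restrict{u_n}{\Gamma})^+$ is dominated by $|\restrict{u_n}{\Gamma}|\in Y$, but it is worth stating explicitly.
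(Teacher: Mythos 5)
Your proof is correct, and the linear-subspace and closedness parts coincide with the paper's argument (the same $1/k$-selection plus triangle inequality in the Fr\'echet metric). Where you genuinely diverge is the sublattice step. The paper works directly with $u_n\vee v_n$: it passes to subsequences along which the boundary restrictions converge $\Hm$-a.e., finds an $L^1$-dominating function on each compact $K\subset\Gamma_\loc$, applies dominated convergence, and then runs a diagonal argument over a compact exhaustion of $\Gamma_\loc$. You instead reduce everything to the positive part via $\varphi\vee\psi=\varphi+(\psi-\varphi)^+$ and exploit that $t\mapsto t^+$ is a contraction at both ends: $\norm{u_n^+}_{1,p}\le\norm{u_n}_{1,p}\to 0$ (which is all you need since the limit is zero, so no appeal to continuity of the lattice operations at a general point of $\Wone$ is required), and $\abs{a^+-b^+}\le\abs{a-b}$ gives $\norm{(\restrict{u_n}{\Gamma})^+-\varphi^+}_K\le\norm{\restrict{u_n}{\Gamma}-\varphi}_K\to 0$ on every compact $K$. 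This buys you norm convergence on each seminorm without any subsequence extraction, a.e.\ convergence, or domination argument, so it is the more elementary and arguably cleaner route; the paper's method has the mild advantage of treating $u_n\vee v_n$ in one stroke without the lattice identities. Two cosmetic points: the inequality you quote should be $\abs{a^+-b^+}\le\abs{a-b}$ rather than $\bigl|\abs{a}-\abs{b}\bigr|\le\abs{a-b}$ (the latter is about absolute values, though both are true and the argument is unaffected), and your observation that $\restrict{u_n^+}{\Gamma}\in\Loneloc$ by domination is indeed worth stating, as you did.
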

\begin{proof}
It is readily seen that $Z^p$ is a vector subspace of $\Loneloc$.

Suppose the Fr\'echet space $\Loneloc$ is equipped with the metric $d$.
Let $\phi\in\Loneloc$ and $\phi_n\in Z^p$ for all $n\in \NN$ be such that $\phi_n\to\phi$ in $\Loneloc$.
Let $u_n\in\Wone\cap\Cbar$ be such that $\restrict{u_n}{\Gamma}\in\Loneloc$, $\norm{u_n}_{\Wone}\le\frac{1}{n}$, $d(\phi_n,\restrict{u_n}{\Gamma})\le\frac{1}{n}$ for all $n\in\NN$.
Then $u_n\to 0$ in $\Wone$ and $d(\phi,\restrict{u_n}{\Gamma})\le d(\phi,\phi_n)+d(\phi_n,\restrict{u_n}{\Gamma})\to 0$. Hence $\phi\in Z^p$ and $Z^p$ is closed in $\Loneloc$.

It remains to show that $Z^p$ is a sublattice.
As $\Loneloc$ is a vector lattice, is suffices to show that $\phi\vee\psi\in Z^p$ for all $\phi,\psi\in Z^p$.
Let $\phi,\psi\in Z^p$ and $(u_n)$, $(v_n)$ be sequences in $\Wone\cap\Cbar$ such that $u_n\to 0$ and $v_n\to 0$ in $\Wone$ and
$\restrict{u_n}{\Gamma}\to\phi$ and $\restrict{v_n}{\Gamma}\to\psi$ in $\Loneloc$.
After passing to a subsequence we may in addition suppose that
$\restrict{u_n}{\Gamma}\to\phi$ and $\restrict{v_n}{\Gamma}\to\psi$ $\Hm$-a.e.\ on $\Gamma$.
Then $u_n\vee v_n\to 0$ in $\Wone$ and $u_n\vee v_n\to \phi\vee\psi$ $\Hm$-a.e.\ on $\Gamma$.
Let $K\subset\Gamma_\loc$ be compact. Then, again passing to a subsequence, we may suppose that there exists a $g\in\Lone[K]$ such that $\max\{\abs{u_n},\abs{v_n}\}\le g$ on $K$ for all $n\in\NN$.
By Lebesgue dominated convergence we obtain $u_n\vee v_n\to\phi\vee\psi$ in $\Lone[K]$.
As $\Gamma_\loc$ is $\sigma$-compact, a diagonal sequence argument yields $\phi\vee\psi\in Z^p$.
\end{proof}

\begin{remark}
In~\cite[Section~4]{CHK2015} the approximative trace is used in a slightly more general setting.
The functions there are elements of $L^q(\Omega)$ with weak partial derivatives in $L^p(\Omega)$, and the approximative trace is defined in $Y=L^r(\partial\Omega)$. While in~\cite[Definition~4.2]{CHK2015} all values $1\le p,q,r\le\infty$ are admitted, in the applications given there only a certain range of parameters governed by Maz$'$ya's inequality~\cite[Corollary~2 in Section~6.11.1]{Maz2011} plays a role.

In~\cite[Remark~4.3.(b)]{CHK2015} the authors express that they expect the uniqueness of the approximative trace to depend nontrivially on $r$ in general. This expectation is unfounded, which we can see as follows. We use the notation from~\cite{CHK2015}.
Firstly, we exclude the (mostly trivial) cases when $p=\infty$, $q=\infty$ or $r=\infty$.
Of course, individual elements of $W^1_{p,q}(\Omega)$ can have a unique approximative trace in $Y=L^r(\partial\Omega)$, without admitting an approximative trace in $Y=L^s(\partial\Omega)$ for an $s\ne r$. We show that whether an element has more than one approximative trace in $L^r(\partial\Omega)$ is independent of $r$.
Because of linearity, it suffices to consider approximative traces of the zero function in $W^1_{p,q}(\Omega)$.
In the following let $(u_n)$ be a sequence in $W^1_{p,q}(\Omega)$ such that $u_n\to 0$ in $W^1_{p,q}(\Omega)$.
Suppose that $\restrict{u_n}{\partial\Omega} \to \phi\ne 0$ in $L^r(\partial\Omega)$.
Then obviously the approximative trace in $W^1_{p,q}(\Omega)$ for $Y=\Loneloc[\partial\Omega]$ is not unique.
Conversely, suppose that $\restrict{u_n}{\partial\Omega} \to \phi\ne 0$ in $\Loneloc[\partial\Omega]$.
By a straightforward truncation and cut-off argument we may assume in addition that $\Omega$ is bounded, $\Hm(\partial\Omega)<\infty$, $0\le u_n\le 1$, $0\le\phi\le 1$ and $\restrict{u_n}{\partial\Omega} \to \phi$ in $L^1(\partial\Omega)$.
Moreover, after passing to a subsequence we may assume that we have pointwise convergence $\Hm$-a.e.\ on the boundary. By Lebesgue dominated convergence, we obtain
$\restrict{u_n}{\partial\Omega} \to \phi$ in $L^r(\partial\Omega)$. As $\phi\ne0$, the approximative trace in $W^1_{p,q}(\Omega)$ for $Y=L^r(\partial\Omega)$ is not unique.
In other words, the approximative trace for $Y=L^r(\partial\Omega)$ is unique if and only if this is the case for $Y=\Loneloc[\partial\Omega]$.
\end{remark}

\section{Approximative traces of the zero function}\label{sec:app-trace-zero}

In this section we describe the approximative traces of the zero function in $\Wone$.
We extend the presentation from~\cite{AW03} to the $p$-dependent setting, where we always choose the Hausdorff measure $\Hm$ on $\Gamma$.
Since some modifications are required in our setting, we give the details.
The case $p=\infty$ can be excluded by Remark~\ref{rem:unique-pinf}.
We fix $1\le p<\infty$ and an open set $\Omega\subset\RR^d$.

For the following lemma we use the argument from~\cite[Lemma~4.14]{AtE12:sect-form}.
In~\cite[Lemma~3.4]{Dan2000:robin-bvp} a different proof is given. 
In both of these references the treatment is focused on the case $p=2$.
\begin{lemma}\label{lem:Xs-ind-mult}
Suppose that $\varphi\in Z^p$ and $K\subset\Gamma_\loc$ is compact.
Then $\one_K\varphi\in Z^p$.
\end{lemma}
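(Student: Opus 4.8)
The plan is to combine the multiplication-by-a-cutoff trick of~\cite[Lemma~4.14]{AtE12:sect-form} with the fact, established in Lemma~\ref{lem:lat-approx-tr}, that $Z^p$ is closed in $Y=\Loneloc$. Concretely, I would first show that $\restrict{\eta}{\Gamma}\,\varphi\in Z^p$ for every $\eta\in\Cinfc[\RR^d]$, and then exhibit $\one_K\varphi$ as a limit in $Y$ of such functions. Note that $\one_K\varphi$ does lie in $Y$: since $K$ is a compact subset of $\Gamma_\loc$ we have $\Hm(K)<\infty$, and $\varphi\in\Loneloc$, so $\one_K\varphi\in\Lone\subseteq\Loneloc=Y$.

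For the first step, let $(u_n)$ in $\Wone\cap\Cbar$ be a sequence witnessing $\varphi\in Z^p$, so $\restrict{u_n}{\Gamma}\in Y$, $u_n\to 0$ in $\Wone$ and $\restrict{u_n}{\Gamma}\to\varphi$ in $Y$, and set $w_n:=\eta u_n$ (with $\eta$ restricted to $\clos\Omega$). One checks that $w_n\in\Wone\cap\Cbar$ and, by the product rule $\nabla w_n=\eta\,\nabla u_n+u_n\,\nabla\eta$,
\[
    \norm{w_n}_{\Wone}\le\norm{\eta}_\infty\,\norm{u_n}_{\Wone}+\norm{\nabla\eta}_\infty\,\norm{u_n}_{\Lp}\longrightarrow 0,
\]
where one uses $1\le p<\infty$, so that $u_n\to 0$ in $\Lp$ as well. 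Moreover $\restrict{w_n}{\Gamma}=\restrict{\eta}{\Gamma}\,\restrict{u_n}{\Gamma}\in Y$, and since $\norm{\restrict{\eta}{\Gamma}\psi}_{K'}\le\norm{\eta}_\infty\,\norm{\psi}_{K'}$ for every compact $K'\subset\Gamma_\loc$, one obtains $\restrict{w_n}{\Gamma}\to\restrict{\eta}{\Gamma}\,\varphi$ in $Y$. Hence $\restrict{\eta}{\Gamma}\,\varphi\in Z^p$.

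For the second step, I would choose for each $j\in\NN$ a function $\eta_j\in\Cinfc[\RR^d]$ with $\one_K\le\eta_j\le\one_{U_j}$, where $U_j:=\set{x\in\RR^d}{\dist(x,K)<1/j}$; such $\eta_j$ exist by mollifying a Urysohn function since $K$ is compact. Then $0\le\eta_j\le 1$ and $\restrict{\eta_j}{\Gamma}\to\one_K$ pointwise on $\Gamma$ (clear on $K$; and if $z\in\Gamma\setminus K$ then $\dist(z,K)>0$ since $K$ is closed, so $\eta_j(z)=0$ for large $j$). Fixing a compact $K'\subset\Gamma_\loc$ and using $\abs{\restrict{\eta_j}{\Gamma}\varphi}\le\abs{\varphi}$ together with $\abs{\varphi}\one_{K'}\in\Lone$, dominated convergence gives $\restrict{\eta_j}{\Gamma}\varphi\to\one_K\varphi$ in $\Lone[K']$, hence in $Y$. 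Since each $\restrict{\eta_j}{\Gamma}\varphi\in Z^p$ by the first step and $Z^p$ is closed in $Y$, we conclude $\one_K\varphi\in Z^p$.

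The argument is essentially routine. The points that need a little care are the verification that $w_n$ stays in $\Wone\cap\Cbar$ with boundary restriction in $Y$ and converges to $0$ in $\Wone$ — where the standing assumption $p<\infty$ enters through $u_n\to 0$ in $\Lp$ — and the choice of cutoffs $\eta_j$ so that $\restrict{\eta_j}{\Gamma}\to\one_K$ pointwise while staying uniformly bounded, which makes dominated convergence applicable on each compact subset of $\Gamma_\loc$. I do not anticipate a genuine obstacle.
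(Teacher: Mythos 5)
Your proof is correct and its overall architecture coincides with the paper's: first one shows $\psi\varphi\in Z^p$ for every $\psi\in\Cinfc[\RR^d]$ by multiplying the witnessing sequence (your first step is the paper's almost verbatim), and then one approximates $\one_K\varphi$ inside the closed subspace $Z^p$ of $\Loneloc$. The two arguments diverge only in how the approximation of $\one_K$ by smooth multipliers is produced. The paper chooses a bounded open $V\supset K$ with $\dist(V,\Gamma_\nloc)>0$, sets $M:=\clos{V}\cap\Gamma$ (a compact subset of $\Gamma_\loc$), and invokes Stone--Weierstrass together with the density of $C(M)$ in $\Lone[M]$ to find smooth $\psi_n$ with $\restrict{\psi_n}{M}\to\one_K$ in $\Lone[M]$, which are then localised by a fixed cutoff $\eta$. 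You instead take uniformly bounded smooth Urysohn-type cutoffs $\eta_j$ supported in shrinking neighbourhoods of $K$, get pointwise convergence $\restrict{\eta_j}{\Gamma}\to\one_K$ on $\Gamma$, and conclude by dominated convergence on each compact $K'\subset\Gamma_\loc$. Your route is slightly more elementary: it never needs the separation $\dist(K,\Gamma_\nloc)>0$ explicitly, and the uniform bound $0\le\eta_j\le 1$ makes the passage from convergence of the multipliers to convergence of $\restrict{\eta_j}{\Gamma}\varphi$ immediate, whereas the paper's argument has to be read with a little care at exactly this point since the $\psi_n$ produced by Stone--Weierstrass are a priori only controlled in $\Lone[M]$. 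Both arguments finish identically, via the closedness of $Z^p$ from Lemma~\ref{lem:lat-approx-tr}.
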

\begin{proof}
Let $\varphi\in Z^p$ and $\psi\in\Cinfc[\RR^d]$. 
Suppose $(u_n)$ is a sequence in $\Wone\cap\Cbar$ such that $u_n\to 0$ in $\Wone$ and $\restrict{u_n}{\Gamma}\to\phi$ in $\Loneloc$.
Then $\psi u_n\to 0$ in $\Wone$ and $\restrict{(\psi u_n)}{\Gamma}\to\psi\phi$ in $\Loneloc$.
Hence $\psi\phi\in Z^p$ for all $\psi\in\Cinfc[\RR^d]$.

As $\dist(K,\Gamma_\nloc)>0$, there exists a bounded open set $V\subset\RR^d$ such that $K\subset V$ and
$\dist(V,\Gamma_\nloc)>0$.
Moreover, there exists an $\eta\in\Cinfc[V]$ such that $0\le\eta\le 1$ and $\eta=1$ on $K$.
Let $M := \clos{V}\cap\Gamma$. Then $M$ is compact and $M\subset\Gamma_\loc$.
By the Stone--Weierstrass theorem $\{\restrict{\psi}{M} : \psi\in\Cinfc[\RR^d]\}$ is dense in $C(M)$.
As $C(M)$ is dense in $\Lone[M]$, there exists a sequence $(\psi_n)$ in $\Cinfc[\RR^d]$ such that $\restrict{\psi_n}{M}\to\one_K$ in
$\Lone[M]$. Then $\restrict{(\eta\psi_n)}{\Gamma}\to\one_K$ in $\Lone[\Gamma]$.
It follows from the first part of the proof that $\eta\psi_n\phi\in Z^p$ for all $n\in\NN$.
As $Z^p$ is closed in $\Loneloc$ and $\eta\psi_n\phi\to\one_K\phi$ in $\Lone[\Gamma]$, one obtains $\one_K\phi\in
Z^p$.
\end{proof}

The next proposition and its corollary extend~\cite[Lemma~3.4 and Proposition~3.3]{Dan2000:robin-bvp}
to our setting.
\begin{proposition}\label{prop:Xs-ideal}
One has $\Linf[\Gamma_\loc] Z^p \subset Z^p$.
Moreover, $Z^p$ is a closed lattice ideal of $\Loneloc$.
\end{proposition}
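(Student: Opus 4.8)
The plan is to derive both assertions from Lemma~\ref{lem:Xs-ind-mult} together with the lattice structure of $Z^p$ from Lemma~\ref{lem:lat-approx-tr}, using the $\sigma$-compactness of $\Gamma_\loc$ to pass from compactly supported multipliers to all of $\Linf[\Gamma_\loc]$. First I would prove that $\one_A\varphi\in Z^p$ whenever $\varphi\in Z^p$ and $A\subset\Gamma_\loc$ is an arbitrary Borel set. Write $\Gamma_\loc=\bigcup_n K_n$ with $K_n$ compact and $K_n\subset K_{n+1}$. For fixed $n$, approximate $\one_{A\cap K_n}$ in $\Lone[K_n]$ by finite linear combinations of indicators of compact subsets of $\Gamma_\loc$ — more simply, use inner regularity of $\Hm$ on $\Gamma_\loc$ to find compact sets $C_j\subset A\cap K_n$ with $\Hm((A\cap K_n)\setminus C_j)\to 0$, so that $\one_{C_j}\varphi\to\one_{A\cap K_n}\varphi$ in $\Lone[K_n]$ and hence in $\Loneloc$. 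By Lemma~\ref{lem:Xs-ind-mult} each $\one_{C_j}\varphi\in Z^p$, and $Z^p$ is closed, so $\one_{A\cap K_n}\varphi\in Z^p$; letting $n\to\infty$ gives $\one_{A\cap K_n}\varphi\to\one_A\varphi$ in $\Loneloc$ (since on each compact $K_m$ the functions agree for $n\ge m$), whence $\one_A\varphi\in Z^p$.

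Next I would bootstrap from Borel indicators to arbitrary $g\in\Linf[\Gamma_\loc]$. Given such a $g$ and $\varphi\in Z^p$, approximate $g$ uniformly on $\Gamma_\loc$ by simple functions $g_j=\sum_k c_{k,j}\one_{A_{k,j}}$ with $A_{k,j}\in\Borel(\Gamma_\loc)$ and $\norm{g_j}_\infty$ bounded. By the previous paragraph and linearity, $g_j\varphi\in Z^p$. Since $g_j\to g$ uniformly, for any compact $K\subset\Gamma_\loc$ we have $\norm{(g_j-g)\varphi\one_K}_1\le\norm{g_j-g}_\infty\norm{\varphi\one_K}_1\to 0$, so $g_j\varphi\to g\varphi$ in $\Loneloc$; as $Z^p$ is closed, $g\varphi\in Z^p$. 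This proves $\Linf[\Gamma_\loc]Z^p\subset Z^p$.

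For the second assertion, recall $Z^p$ is a closed vector sublattice of $\Loneloc$ by Lemma~\ref{lem:lat-approx-tr}, so it remains to verify the ideal property: if $\varphi\in Z^p$, $\psi\in\Loneloc$ and $0\le\psi\le\abs{\varphi}$ $\Hm$-a.e., then $\psi\in Z^p$. On the set $\{\abs{\varphi}>0\}$ define $g:=\psi/\abs{\varphi}$, extended by $0$ elsewhere; then $0\le g\le 1$, so $g\in\Linf[\Gamma]$, and since $\varphi$ vanishes $\Hm$-a.e.\ on $\Gamma_\nloc$ we may regard $g$ as an element of $\Linf[\Gamma_\loc]$ after modification on a null set. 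Now $g\abs{\varphi}=\psi$ $\Hm$-a.e., and $\abs{\varphi}\in Z^p$ because $Z^p$ is a sublattice (so $\varphi^+,\varphi^-\in Z^p$ and $\abs{\varphi}=\varphi^++\varphi^-$). By the first part, $\psi=g\abs{\varphi}\in\Linf[\Gamma_\loc]Z^p\subset Z^p$, completing the proof.

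The main obstacle I anticipate is purely bookkeeping: ensuring the reduction from compact multipliers $\one_K$ (the only case Lemma~\ref{lem:Xs-ind-mult} gives) to general Borel indicators is carried out correctly, using inner regularity of $\Hm$ on the $\sigma$-finite space $\Gamma_\loc$ and the fact that $\Loneloc$-convergence is tested only against compact subsets of $\Gamma_\loc$. The lattice-ideal step is then routine once one observes that the Radon–Nikodym-type quotient $\psi/\abs{\varphi}$ is a bona fide bounded function supported in $\Gamma_\loc$.
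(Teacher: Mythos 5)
Your proposal is correct and follows essentially the same route as the paper: extend the compact-indicator multipliers from Lemma~\ref{lem:Xs-ind-mult} to all of $\Linf[\Gamma_\loc]$ using the closedness of $Z^p$, then obtain the ideal property by writing the dominated function as $h\abs{\varphi}$ with the bounded quotient $h=\psi/\abs{\varphi}$. The only difference is cosmetic: where you pass from compact to general Borel indicators via inner regularity and then to $\Linf$ via uniform approximation by simple functions, the paper compresses both steps into a single monotone class argument; both are standard and equally valid.
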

\begin{proof}
The first statement follows from Lemma~\ref{lem:Xs-ind-mult} by a 
monotone class argument.
In fact, consider the set
\[
    \cM := \{ h\in\Linf[\Gamma_\loc]:h\phi\in Z^p\text{ for all }\phi\in Z^p \}.
\]
Clearly $\cM$ is a vector subspace of $\Linf[\Gamma_\loc]$.
Moreover, $\one_K\in\cM$ for all compact sets $K\subset\Gamma_\loc$ by Lemma~\ref{lem:Xs-ind-mult}.
Let $h\in\Linf[\Gamma_\loc]$ and $(h_n)$ be a sequence in $\cM$ such that $h_n\ge 0$ and $h_n\uparrow h$ as
$n\to\infty$. Suppose $\phi\in Z^p$. Then it follows from Lebesgue dominated convergence that
$h_n\phi\to h\phi$ in $\Loneloc$. As $Z^p$ is closed, one deduces $h\phi\in Z^p$ and therefore $h\in\cM$.
By the monotone class theorem, it follows that $\cM=\Linf[\Gamma_\loc]$.
Therefore $\Linf[\Gamma_\loc] Z^p\subset Z^p$.

We already know that $Z^p$ is a closed sublattice of $\Loneloc$ by Lemma~\ref{lem:lat-approx-tr}.
It remains to show that $Z^p$ is an ideal.
Let $\phi\in Z^p$ and $g\in\Loneloc$ be such that $0\le g\le\phi$.
Define the function $h\in\Linf[\Gamma_\loc]$ by setting 
\[
    h(x):= \begin{cases}
   \frac{g(x)}{\phi(x)} & \text{if $\phi(x)>0$,} \\
   0 & \text{if $\phi(x)=0$.}
   \end{cases}
\]   
Then by the first part of the proof one has $g=h\phi\in Z^p$.
This shows that $Z^p$ is an ideal of $\Loneloc$.
\end{proof}

\begin{corollary}\label{cor:Gamma_s}
There exists a Borel set $\Gamma_\sing^p\subset\Gamma_\loc$ such that $Z^p=\Loneloc[\Gamma_\sing^p]$.
\end{corollary}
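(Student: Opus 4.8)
The plan is to establish the familiar structure theorem that a closed lattice ideal of $\Loneloc$ over the $\sigma$-finite measure space $(\Gamma_\loc,\Borel(\Gamma_\loc),\Hm)$ is of the form $\Loneloc[A]$ for some Borel set $A$, and to apply it to $Z^p$, which is such an ideal by Proposition~\ref{prop:Xs-ideal}. Since $\Gamma_\loc$ is $\sigma$-compact, say $\Gamma_\loc=\bigcup_{n\in\NN}K_n$ with each $K_n$ compact and hence of finite $\Hm$-measure, one first fixes a strictly positive Borel function $w\in\Lone[\Gamma_\loc]$, for instance $w:=\sum_{n\in\NN}2^{-n}(1+\Hm(K_n))^{-1}\one_{K_n}$.

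Next I would build a ``support function'' for $Z^p$. The family $\mathcal{F}:=\set{\phi\wedge w}{\phi\in Z^p,\ \phi\ge 0}$ is bounded above by $w$ in $\Lone[\Gamma_\loc]$ and is directed upward, because $Z^p$ is a sublattice (so $\phi_1\vee\phi_2\in Z^p$ whenever $\phi_1,\phi_2\in Z^p$ are nonnegative). By Dedekind completeness of $\Lone[\Gamma_\loc]$ the supremum $s:=\sup\mathcal{F}$ exists, and by upward directedness it is the $\Hm$-a.e.\ increasing limit of a sequence $(\psi_k)$ drawn from $\mathcal{F}$. Each $\psi_k$ has the form $\phi\wedge w$ with $\phi\in Z^p$ and $0\le\phi\wedge w\le\phi$, so $\psi_k\in Z^p$ by the ideal property; since $\psi_k\to s$ in $\Lone[\Gamma_\loc]$ and a fortiori in $\Loneloc$, closedness of $Z^p$ gives $s\in Z^p$. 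Fixing a Borel representative of $s$, one then puts
\[
    \Gamma_\sing^p:=\set{z\in\Gamma_\loc}{s(z)>0},
\]
a Borel subset of $\Gamma_\loc$, and it remains to check $Z^p=\Loneloc[\Gamma_\sing^p]$.

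For $Z^p\subset\Loneloc[\Gamma_\sing^p]$, let $\phi\in Z^p$. Then $\abs{\phi}\in Z^p$, so $\abs{\phi}\wedge w\in Z^p$ by the ideal property, and since $0\le\abs{\phi}\wedge w\le w$ the definition of $s$ as a supremum gives $\abs{\phi}\wedge w\le s$. Hence $\abs{\phi}\wedge w=0$ $\Hm$-a.e.\ on $\Gamma_\loc\setminus\Gamma_\sing^p$, and as $w>0$ this forces $\phi=0$ $\Hm$-a.e.\ there, i.e.\ $\phi\in\Loneloc[\Gamma_\sing^p]$. For the reverse inclusion, let $g\in\Loneloc[\Gamma_\sing^p]$; writing $g=g^+-g^-$ and noting $g^\pm\in\Loneloc[\Gamma_\sing^p]$, we may assume $g\ge 0$. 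For $n\in\NN$ the function $ns$ belongs to $Z^p$ (a vector space), hence $g\wedge(ns)\in Z^p$ by the ideal property, since $0\le g\wedge(ns)\le ns$. As $n\to\infty$ one has $g\wedge(ns)\uparrow g$ $\Hm$-a.e.: on $\Gamma_\sing^p$ because $s>0$ there, and on $\Gamma_\loc\setminus\Gamma_\sing^p$ trivially because $g=0$ there. On each compact $K\subset\Gamma_\loc$, dominated convergence with the $\Lone[K]$-bound $0\le g\wedge(ns)\le g\one_K$ gives $g\wedge(ns)\to g$ in $\Loneloc$, so $g\in Z^p$ by closedness. This establishes the claim, and with it the corollary.

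The only points requiring care are the passage from the abstract order supremum $s$ to a genuine element of the closed ideal $Z^p$ — for which the upward directedness coming from the sublattice property is essential — and the truncation device $g\wedge(ns)$, which is needed precisely because $g/s$ need not be bounded on $\Gamma_\sing^p$; everything else is routine, and indeed the statement is just the standard identification of the closed ideals of $\Lone$ (or $\Loneloc$) over a $\sigma$-finite measure space with the subspaces of functions vanishing off a fixed measurable set.
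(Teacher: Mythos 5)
Your proof is correct, but it takes a genuinely different route from the paper's. The paper does not reprove the structure theorem for closed ideals: it invokes Schaefer's classification of closed ideals in $L^q$ spaces~\cite[Example~2 on p.\,157]{Sch74} on each $\Lone[K_n]$ for an exhausting sequence of compacts $K_n\subset\Gamma_\loc$, obtaining Borel sets $A_n\subset K_n$ with $\Lone[A_n]=\one_{K_n}Z^p$, and then spends most of the argument gluing: checking that $\Hm(A_k\cap K_n\setminus A_n)=0$ so that $\Gamma_\sing^p:=\bigcup_n A_n$ is consistent with every $A_n$ modulo nullsets, before verifying the two inclusions. You instead give a self-contained proof of the classification adapted directly to the local space, via the standard exhaustion device: a strictly positive weight $w\in\Lone[\Gamma_\loc]$, the upward directed family $\set{\phi\wedge w}{\phi\in Z^p,\ \phi\ge 0}$ (directedness coming from the sublattice property), its supremum $s$ realised as an a.e.\ increasing limit of a sequence from the family and hence lying in $Z^p$ by the ideal property plus closedness, and $\Gamma_\sing^p:=\{s>0\}$; the truncation $g\wedge(ns)$ then handles the reverse inclusion exactly as needed. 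The one step you assert without proof --- that an upward directed family in $\Lone[\Gamma_\loc]$ bounded above by $w$ has a supremum attained along an increasing sequence from the family --- is the standard consequence of $\sigma$-finiteness (maximise the integral and use directedness), so there is no gap. What your approach buys is independence from the cited structure theorem and the elimination of the compatibility bookkeeping for the sets $A_n$; what the paper's approach buys is brevity at the price of an external reference and the local-to-global patching. Both use exactly the same inputs, namely that $Z^p$ is a closed lattice ideal of $\Loneloc$ (Lemma~\ref{lem:lat-approx-tr} and Proposition~\ref{prop:Xs-ideal}).
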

The subscript `$\sing$' in the notation stands for `singular'.
\begin{proof}
The claim follows from Proposition~\ref{prop:Xs-ideal} and Schaefer's description of closed ideals in $L^q$
spaces~\cite[Example~2 on p.\,157]{Sch74}. Since Schaefer's description is not given for local $L^q$ spaces,
we shall address this detail.

As $(\Gamma_\loc,\Borel(\Gamma_\loc),\Hm)$ is $\sigma$-finite, it is localisable and does not have infinite
atoms. Clearly $\one_K Z^p$ is a closed ideal in $\Lone[K]$ for every compact $K\subset\Gamma_\loc$.
Let $(K_n)$ be an increasing sequence of compact subsets of $\Gamma_\loc$ with $\bigcup_{n\in\NN} K_n=\Gamma_\loc$.
By~\cite[Example~2 on p.\,157]{Sch74} there exists a Borel set $A_n\subset K_n$ such that $\Lone[A_n]=\one_{K_n}Z^p$
for all $n\in\NN$. Fix $n,k\in\NN$. 
As $\one_{A_{k}\cap K_n\setminus A_n}\in\Lone[A_{k}]\subset Z^p$, it follows that
$\one_{A_{k}\cap K_n\setminus A_n}\in \one_{K_n}Z^p=\Lone[A_n]$. Hence $\Hm(A_{k}\cap K_n\setminus A_n)=0$.
Set $\Gamma_\sing^p=\bigcup_{n\in\NN} A_n$.
It follows that
\[
    \Hm((\Gamma_\sing^p\cap K_n)\symdiff A_n) = \Hm(\Gamma_\sing^p\cap K_n\setminus A_n) \le\sum_{k=1}^\infty\Hm(A_k\cap
K_n\setminus A_n)=0
\]
for all $n\in\NN$. Hence we may replace $A_n$ by $\Gamma_\sing^p\cap K_n$ to obtain $L^1(\Gamma_\sing^p\cap K_n)=\one_{K_n}Z^p$ for all $n\in\NN$.

If $\phi\in Z^p$, then $\one_{K_n}\phi=0$ $\Hm$-a.e.\ on $K_n\setminus A_n$ for all $n\in\NN$.
So $\phi=0$ $\Hm$-a.e.\ on $\Gamma_\loc\setminus\Gamma_\sing^p$. Thus $\phi\in\Loneloc[\Gamma_\sing^p]$.
Conversely, suppose $\phi\in\Loneloc[\Gamma]$ such that $\phi=0$ $\Hm$-a.e.\ on $\Gamma_\loc\setminus\Gamma_\sing^p$.
Then $\one_{K_n}\phi\in\Lone[A_n]=\one_{K_n}Z^p\subset Z^p$ for all $n\in\NN$.
As $Z^p$ is closed and $\one_{K_n}\phi\to\phi$ in $\Loneloc$ as $n\to\infty$ by Lebesgue dominated convergence, one deduces $\phi\in Z^p$.
\end{proof}

Obviously the approximative trace is not unique in $\Wone$ if and only if $\Hm(\Gamma^p_\sing)>0$.
We introduce a localised notion of uniqueness and nonuniqueness of the approximative trace on a part of the boundary.
\begin{definition}
Let $A$ be a Borel subset of $\Gamma_\loc$.
The approximative trace is said to be \emphdef{unique on $A$} if $\Hm(A\cap\Gamma^p_\sing)=0$.
Conversely, the approximative trace is called \emphdef{nonunique on $A$} if $\Hm(A\cap\Gamma^p_\sing)>0$.
\end{definition}

\begin{lemma}\label{lem:nonuni-witness}
Let $\Omega\subset\RR^d$ be open and $A\subset\Gamma_\loc$ a Borel set. 
Suppose that the approximative trace is nonunique on $A$.
Then there exist a compact set $K\subset A$ with $\Hm(K)>0$, $U\subset\RR^d$ open, bounded and smooth with $K\subset U$ and $\dist(U,\Gamma_\nloc)>0$, $\phi\in L^1(\Gamma)$ and a sequence $(u_n)$ in $\Wone\cap\Cbar$ such that
$u_n\to 0$ in $\Wone$, $0\le u_n\le 1$, $\supp u_n\Subset U$, $\restrict{u_n}{\Gamma}\to\phi$ in $L^1(\Gamma)$ and $u_n=1$ on $K$ for all $n\in\NN$.
\end{lemma}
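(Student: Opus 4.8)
The plan is to extract a concrete approximating sequence witnessing nonuniqueness on $A$, and then progressively improve its properties — localisation, boundedness between $0$ and $1$, and the normalisation $u_n = 1$ on a compact piece — using the structural results already established, primarily Corollary~\ref{cor:Gamma_s}, Lemma~\ref{lem:Xs-ind-mult} and Proposition~\ref{prop:Xs-ideal}. First I would observe that, since the approximative trace is nonunique on $A$, we have $\Hm(A\cap\Gamma^p_\sing)>0$, and by inner regularity of $\Hm$ on the $\sigma$-compact set $\Gamma_\loc$ there is a compact $K_0\subset A\cap\Gamma^p_\sing$ with $0<\Hm(K_0)<\infty$. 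Then $\one_{K_0}\in\Loneloc[\Gamma^p_\sing] = Z^p$ by Corollary~\ref{cor:Gamma_s}, so $\one_{K_0}$ is an approximative trace of the zero function: there is a sequence $(v_n)$ in $\Wone\cap\Cbar$ with $v_n\to 0$ in $\Wone$ and $\restrict{v_n}{\Gamma}\to\one_{K_0}$ in $\Loneloc$. Passing to a subsequence we may assume the boundary convergence holds $\Hm$-a.e.\ on $\Gamma$, and also $v_n\to 0$ a.e.\ on $\Omega$ along a further subsequence.

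Next I would carry out the localisation and the uniform bounds. Since $K_0\subset\Gamma_\loc$ and $\dist(K_0,\Gamma_\nloc)>0$, choose a bounded smooth open set $U\subset\RR^d$ with $K_0\subset U$ and $\dist(U,\Gamma_\nloc)>0$, together with $\eta\in\Cinfc[\RR^d]$ with $0\le\eta\le 1$, $\supp\eta\Subset U$ and $\eta\equiv 1$ on a neighbourhood of $K_0$. Replacing $v_n$ by $\eta v_n$ keeps the sequence in $\Wone\cap\Cbar$, preserves convergence to $0$ in $\Wone$ (multiplication by a fixed $\Cinfc$ function is bounded on $\Wone$, as used in Lemma~\ref{lem:Xs-ind-mult}), and yields $\restrict{(\eta v_n)}{\Gamma}\to\eta\one_{K_0}=\one_{K_0}$ in $\Loneloc$; moreover $\supp(\eta v_n)\Subset U$. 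To get $0\le\cdot\le 1$, apply the truncation $w_n := (0\vee(\eta v_n))\wedge 1 = (\eta v_n)^+\wedge 1$. This operation is a normalised lattice operation, so $w_n\in\Wone\cap\Cbar$, $w_n\to 0$ in $\Wone$ (truncation is continuous on $\Wone$ and $0^+\wedge 1 = 0$), $0\le w_n\le 1$, $\supp w_n\Subset U$, and on $\Gamma$ we have $\restrict{w_n}{\Gamma}\to(\one_{K_0})^+\wedge 1 = \one_{K_0}$ $\Hm$-a.e.; since $0\le\restrict{w_n}{\Gamma}\le\one_{\clos{U}\cap\Gamma}\in\Lone[\Gamma]$, dominated convergence on the compact set $\clos{U}\cap\Gamma\subset\Gamma_\loc$ upgrades this to $L^1(\Gamma)$-convergence. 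Set $\phi := \one_{K_0}$, which lies in $L^1(\Gamma)$.

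The remaining, and slightly more delicate, step is to arrange $u_n\equiv 1$ on a compact set $K\subset A$ of positive $\Hm$-measure. By Egorov's theorem applied on $\clos{U}\cap\Gamma$ (finite measure), the convergence $\restrict{w_n}{\Gamma}\to\one_{K_0}$ is uniform off a set of arbitrarily small $\Hm$-measure; combined with inner regularity of $\Hm$ we obtain a compact set $K\subset K_0$ with $\Hm(K)>0$ on which $\restrict{w_n}{\Gamma}\to 1$ uniformly, hence $\min_K\restrict{w_n}{\Gamma}\ge 1-\delta_n$ with $\delta_n\to 0$ (discard the finitely many $n$ with $\delta_n\ge 1$). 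Since $0\le w_n\le 1$, rescaling by $u_n := (1-\delta_n)^{-1} w_n$ would overshoot $1$; instead set $u_n := \bigl((1-\delta_n)^{-1} w_n\bigr)\wedge 1$, again a continuous normalised lattice/scalar operation on $\Wone$. Then $u_n\in\Wone\cap\Cbar$, $u_n\to 0$ in $\Wone$, $0\le u_n\le 1$, $\supp u_n\Subset U$, $u_n\equiv 1$ on $K$, and $\restrict{u_n}{\Gamma}\to\one_{K_0}=\phi$ in $L^1(\Gamma)$ (the scalar factors tend to $1$ and the truncation is continuous, so the limit is unchanged). Replacing $\phi$ by $\one_{K_0}$ — which is supported in $\clos{U}\cap\Gamma$, so genuinely in $L^1(\Gamma)$ — and noting $K\subset K_0\subset A$ finishes the argument. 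The main obstacle is the very last normalisation: naïvely dividing by $(1-\delta_n)$ to force the boundary values up to exactly $1$ on $K$ destroys the bound $u_n\le 1$, so one must interleave rescaling with a further truncation and check that this still converges to $0$ in $\Wone$ and leaves the boundary limit equal to $\phi$; this is where continuity of the lattice operations and of scalar multiplication on $\Wone$, together with $\delta_n\to 0$, must be used carefully.
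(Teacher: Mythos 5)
Your argument is correct and follows essentially the same route as the paper: reduce via Corollary~\ref{cor:Gamma_s} to the indicator of a subset of $A\cap\Gamma_\sing^p$ of positive finite measure, take a witnessing sequence, apply Egorov to obtain a compact $K$ of positive measure with uniform convergence to $1$, localise with a smooth cut-off supported in $U$, and force the value $1$ on $K$ by rescaling and truncating. The only (cosmetic) difference is that the paper uses the fixed rescaling $0\vee 2w_n\wedge 1$ after arranging $w_n\ge\tfrac12$ on $K$, whereas you use the $n$-dependent factor $(1-\delta_n)^{-1}$ followed by $\wedge\,1$; both are valid.
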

\begin{proof}
Suppose the approximative trace is nonunique on $A$. By Corollary~\ref{cor:Gamma_s} one has $Z^p =
\Loneloc[\Gamma_\sing^p]$ with $\Hm(\Gamma_\sing^p\cap A)>0$. 
As $\Hm$ is $\sigma$-finite on $\Gamma_\loc$, there exists a Borel set $A'\subset\Gamma_\sing^p\cap A$ such that
$0<\Hm(A')<\infty$. Then the $(d-1)$-dimensional Hausdorff measure restricted to $A'$ is a Radon measure
in $\RR^d$ by~\cite[Theorem~1.1.3]{EG92}.

As $\phi' := \one_{A'}\in Z^p$, there exists a sequence $(w_n)$ in $\Wone\cap\Cbar$ such that $w_n\to 0$ in $\Wone$ and
$\restrict{w_n}{\Gamma}\to\phi$ in $\Loneloc$. After passing to a subsequence, we may assume that $w_n\to 1$
$\Hm$-a.e.\ on $A'$. By Egorov's theorem~\cite[Theorem~1.2.3]{EG92} and as $\Hm$ is a Radon measure on $A'$, there exists
a compact set $K\subset A'$ such that $\Hm(K)>0$ and $w_n\to 1$ uniformly on $K$.
By discarding finitely many elements, we may suppose that $w_n\ge\frac{1}{2}$ on $K$ for all $n\in\NN$.

Let $U\subset\RR^d$ be a smooth, bounded open set such that $K\subset U$ and $\dist(U,\Gamma_\nloc)>0$. Take $\eta\in\Cinfc[U]$ such that $0\le\eta\le 1$ and $\eta=1$ on $K$.
It suffices now to note that $\phi:=\eta\phi'\in L^1(\Gamma)$ and the sequence $(u_n)$ in $\Wone\cap\Cbar$ given by $u_n := \eta(0\vee 2w_n\wedge 1)$ have the desired properties.
\end{proof}

\begin{lemma}\label{lem:Gsing-incl}
Let $1\le p\le q\le\infty$. Then $Z^q$ is a subspace of $Z^p$ and $\Hm(\Gamma_\sing^q\setminus\Gamma_\sing^p)=0$.
\end{lemma}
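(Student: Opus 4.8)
The plan is to show $Z^q \subset Z^p$ directly from the definition of the approximative trace, and then to conclude the statement about the singular boundary sets using Corollary~\ref{cor:Gamma_s}. The key observation is that a sequence $(u_n)$ in $\Wone[q]\cap\Cbar$ witnessing an approximative trace in $\Wone[q]$ should, after the usual truncation and localisation, also witness the same approximative trace in $\Wone[p]$ for $p\le q$, because on bounded sets with finite boundary measure the higher integrability $L^q$ dominates $L^p$.

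First I would take $\varphi\in Z^q$ and a sequence $(u_n)$ in $\Wone[q]\cap\Cbar$ with $\restrict{u_n}{\Gamma}\in Y$, $u_n\to 0$ in $\Wone[q]$ and $\restrict{u_n}{\Gamma}\to\varphi$ in $Y=\Loneloc$. I want to produce a sequence witnessing $\varphi\in Z^p$. Here one must be a little careful, since $\Wone[q]\cap\Cbar$ is not a priori contained in $\Wone[p]\cap\Cbar$ when $\Omega$ is unbounded or has infinite measure. The remedy is a cut-off argument: fix $\psi\in\Cinfc[\RR^d]$; then $\psi u_n\in\Wone[p]\cap\Cbar$ for every $n$ (the support is compact, so $L^q$-control plus finite measure of the support gives $L^p$-control of $\psi u_n$ and its gradient), $\psi u_n\to 0$ in $\Wone[p]$, and $\restrict{(\psi u_n)}{\Gamma}\to\psi\varphi$ in $\Loneloc$ since convergence in $\Loneloc$ is tested against compact subsets of $\Gamma_\loc$ and multiplication by the bounded compactly supported $\psi$ preserves it. Hence $\psi\varphi\in Z^p$ for every $\psi\in\Cinfc[\RR^d]$. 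Since $Z^p$ is closed in $\Loneloc$ by Lemma~\ref{lem:lat-approx-tr}, and one may choose $\psi_k\in\Cinfc[\RR^d]$ with $0\le\psi_k\le 1$ and $\psi_k\uparrow 1$ pointwise, dominated convergence on each compact $K\subset\Gamma_\loc$ gives $\psi_k\varphi\to\varphi$ in $\Loneloc$, whence $\varphi\in Z^p$. This proves $Z^q\subset Z^p$.

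For the second assertion, Corollary~\ref{cor:Gamma_s} gives Borel sets $\Gamma_\sing^p,\Gamma_\sing^q\subset\Gamma_\loc$ with $Z^p=\Loneloc[\Gamma_\sing^p]$ and $Z^q=\Loneloc[\Gamma_\sing^q]$. The inclusion $Z^q\subset Z^p$ then reads $\Loneloc[\Gamma_\sing^q]\subset\Loneloc[\Gamma_\sing^p]$. To extract the measure-theoretic consequence, I would localise: fix an exhausting increasing sequence of compact sets $K_n\subset\Gamma_\loc$ with $\bigcup_n K_n=\Gamma_\loc$, and note $\one_{\Gamma_\sing^q\cap K_n}\in\Loneloc[\Gamma_\sing^q]\subset\Loneloc[\Gamma_\sing^p]$, so that $\one_{\Gamma_\sing^q\cap K_n}=0$ $\Hm$-a.e.\ on $\Gamma_\loc\setminus\Gamma_\sing^p$; this forces $\Hm\bigl((\Gamma_\sing^q\setminus\Gamma_\sing^p)\cap K_n\bigr)=0$ for every $n$, and letting $n\to\infty$ yields $\Hm(\Gamma_\sing^q\setminus\Gamma_\sing^p)=0$.

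The only genuinely delicate point is the cut-off step: one must verify that multiplication by $\psi\in\Cinfc[\RR^d]$ maps $\Wone[q]\cap\Cbar$ into $\Wone[p]\cap\Cbar$ and is continuous in the relevant topologies, which rests on the elementary inclusion $L^q(A)\subset L^p(A)$ for $\abs{A}<\infty$ together with the product rule $\nabla(\psi u)=\psi\nabla u+u\nabla\psi$; the rest is bookkeeping with the Fréchet topology on $\Loneloc$. Everything else is a routine application of results already in the excerpt.
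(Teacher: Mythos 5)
Your proof is correct and follows essentially the same route as the paper: a cut-off in \Cinfc[\RR^d] reduces matters to compactly supported functions, where H\"older's inequality on a set of finite measure embeds $\Wone[q]$ into $\Wone[p]$, and the measure-theoretic statement then comes out of Corollary~\ref{cor:Gamma_s}. The only cosmetic difference is that the paper reduces at once to indicators $\one_K$ of compact $K\subset\Gamma_\sing^q$ (choosing the cut-off equal to $1$ near $K$ so the trace is unchanged), whereas you treat a general $\varphi\in Z^q$ and invoke the closedness of $Z^p$ to remove the cut-off at the end.
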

\begin{proof}
Let $K\subset\Gamma_\sing^q$ be compact. It suffices to show that $\one_K\in Z^p$.
There exists a sequence $(u_n)$ in $\Wone[q]\cap\Cbar$ such that $u_n\to 0$ in $\Wone[q]$ and $\restrict{u_n}{\Gamma}\to\one_K$ in $\Loneloc$.
By multiplying with a suitable cut-off function in $\Cinfc[\RR^d]$, we can assume that there exists a compact set $M\subset\RR^d$ such that $\supp u_n\subset M$ for all $n\in\NN$. It follows that $(u_n)$ is contained in $\Wone\cap\Cbar$ and $u_n\to 0$ in $\Wone$. Hence $\one_K\in Z^p$.
\end{proof}

For the remainder of this section, we suppose that $1<p<\infty$
and use the relative capacity and its properties as established in~\cite{Bie09:relcap-usem,Bie09:cap-mod}.
The \emphdef{relative $p$-capacity} of a subset $A\subset\clos{\Omega}$ is defined as
\begin{multline*}
 \rcap{A} = \inf\bigl\{\norm{u}^p_{1,p}\setmid\text{$u\in\Wt$ such that there exists an open $V\subset\RR^d$}\\
     \text{with $A\subset V$ and $u\ge 1$ a.e.\ on $V\cap\Omega$}\bigr\}.
\end{multline*}
If $\rcap(A)=0$, then $A$ is called \emphdef{relatively $p$-polar}, and if a property holds outside of a relatively $p$-polar set, it is said to hold \emphdef{relatively $p$-quasi everywhere}. 
We note that the relative $p$-capacity for $\Omega=\RR^d$ is the usual $p$-capacity.

We exclude the nonreflexive case $p=1$ since it is unclear whether the relative capacity is a Choquet capacity in this case.
We define admissible subsets of $\Gamma_\loc$ as in~\cite[(13)]{AW03}.
\begin{definition}
A Borel set $M\subset\Gamma_\loc$ is called \emphdef{$p$-admissible} if for every 
Borel set $A\subset\Gamma_\loc$ one has that $\rcap(A)=0$ implies $\Hm(M\cap A)=0$.
\end{definition}

By utilising the relative capacity, the following theorem allows to select a finer version of the set $\Gamma^p_\sing$ that enjoys additional properties.
The set $\Gamma^p_\sing$ as chosen above does not need to be relatively $p$-polar in general.
\begin{theorem}\label{thm:char-Xs-cap}
Let $\Omega\subset\RR^d$ be open.
There exists a Borel set $S\subset\Gamma_\loc$ such that $Z^p = \Loneloc[S]$,
$\rcap(S)=0$ and $\Gamma_\loc\setminus S$ is $p$-admissible.
Moreover, if $S'\subset\Gamma_\loc$ is a relatively $p$-polar Borel set, then $\Gamma_\loc\setminus S'$ is $p$-admissible
if and only if $\Hm(S'\symdiff S)=0$.
\end{theorem}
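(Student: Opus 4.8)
The plan is to identify $\Gamma^p_\sing$ (from Corollary~\ref{cor:Gamma_s}), up to an $\Hm$\nobreakdash-null set, with an $\Hm$\nobreakdash-essentially largest relatively $p$\nobreakdash-polar Borel subset of $\Gamma_\loc$, and to read off everything from this. Since $(\Gamma_\loc,\Borel(\Gamma_\loc),\Hm)$ is $\sigma$\nobreakdash-finite and inner regular and the relative $p$\nobreakdash-capacity is countably subadditive (see~\cite{Bie09:relcap-usem,Bie09:cap-mod}), one first produces a relatively $p$\nobreakdash-polar Borel set $S\subset\Gamma_\loc$ with $\Hm(A\setminus S)=0$ for \emph{every} relatively $p$\nobreakdash-polar Borel set $A\subset\Gamma_\loc$: fix a countable compact exhaustion $(L_k)$ of $\Gamma_\loc$, for each $k$ choose relatively $p$\nobreakdash-polar Borel sets $A_{k,j}\subset\Gamma_\loc$ with $\Hm(A_{k,j}\cap L_k)$ increasing to $\sup\{\Hm(A\cap L_k):A\subset\Gamma_\loc\text{ relatively }p\text{-polar Borel}\}$, and set $S:=\bigcup_{k,j}A_{k,j}$. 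Then $\rcap(S)=0$ by countable subadditivity, and $\Gamma_\loc\setminus S$ is $p$\nobreakdash-admissible directly from the displayed property of $S$. It remains to prove $\Hm(S\symdiff\Gamma^p_\sing)=0$, for then $\Loneloc[S]=\Loneloc[\Gamma^p_\sing]=Z^p$ by Corollary~\ref{cor:Gamma_s}, so that $S$ has all three asserted properties.

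For $\Hm(\Gamma^p_\sing\setminus S)=0$, write $\Gamma^p_\sing$, up to an $\Hm$\nobreakdash-null set, as a countable union of compacts $K_j$ (inner regularity and $\sigma$\nobreakdash-finiteness); it then suffices to produce for each $j$ a relatively $p$\nobreakdash-polar Borel set $N_j$ with $\Hm(K_j\setminus N_j)=0$, because $\bigcup_jN_j$ is relatively $p$\nobreakdash-polar Borel, contains $\Gamma^p_\sing$ up to an $\Hm$\nobreakdash-null set, and hence lies in $S$ up to an $\Hm$\nobreakdash-null set by the defining property of $S$. Fix $j$. As $\one_{K_j}\in\Loneloc[\Gamma^p_\sing]=Z^p$, choose $(u_n)$ in $\Wone\cap\Cbar$ with $u_n\to 0$ in $\Wone$ and $\restrict{u_n}{\Gamma}\to\one_{K_j}$ in $\Loneloc$. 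Passing to a subsequence, $u_n\to 0$ relatively $p$\nobreakdash-quasi-everywhere on $\clos{\Omega}$ (convergence in $\Wt$ forces relatively $p$\nobreakdash-quasi-everywhere convergence of the relatively $p$\nobreakdash-quasi-continuous representatives along a subsequence by~\cite{Bie09:relcap-usem,Bie09:cap-mod}, and the continuous $u_n$ are their own such representatives); passing to a further subsequence, $\restrict{u_n}{\Gamma}\to\one_{K_j}$ $\Hm$\nobreakdash-a.e.\ on $\Gamma$. Then $N_j:=\{z\in\Gamma_\loc:u_n(z)\not\to 0\}$ is a Borel set, relatively $p$\nobreakdash-polar (a subset of the relatively $p$\nobreakdash-polar exceptional set), and $\Hm(K_j\setminus N_j)=0$ since $u_n(z)\to 1$ for $\Hm$\nobreakdash-a.e.\ $z\in K_j$.

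The reverse inclusion $\Hm(S\setminus\Gamma^p_\sing)=0$ is the main obstacle. As $S$ is relatively $p$\nobreakdash-polar, and as $Z^p$ is a closed sublattice of $\Loneloc$ (Lemma~\ref{lem:lat-approx-tr}) while $\Hm$ is inner regular on $\Gamma_\loc$, it reduces to proving $\one_K\in Z^p$ for every compact $K\subset\Gamma_\loc$ with $\rcap(K)=0$. The plan is to construct, for each $n\in\NN$, some $\phi_n\in\Wone\cap\Cbar$ with $0\le\phi_n\le 1$, $\norm{\phi_n}_{1,p}<\frac1n$, $\supp\phi_n\subset\{x\in\RR^d:\dist(x,K)<\frac1n\}$ and $\phi_n\equiv 1$ on a neighbourhood of $K$ in $\RR^d$. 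Then $\phi_n\to 0$ in $\Wone$, while $\restrict{\phi_n}{\Gamma}\to\one_K$ pointwise on $\Gamma$ and, for large $n$, is dominated by the indicator of a fixed compact subset of $\Gamma_\loc$ (of finite $\Hm$\nobreakdash-measure, since $K\subset\Gamma_\loc$ is compact), so $\restrict{\phi_n}{\Gamma}\to\one_K$ in $\Lone[\Gamma]\subset\Loneloc$ by dominated convergence, whence $\one_K\in Z^p$. To build $\phi_n$: since $\rcap(K)=0$, pick $u\in\Wt$ with $u\ge 1$ a.e.\ on $V\cap\Omega$ for some open $V\supset K$ and with $\norm{u}_{1,p}$ as small as required; truncating $u$ to $[0,1]$ and multiplying by a fixed smooth plateau function equal to $1$ near $K$ and supported in $\{\dist(\cdot,K)<\frac1n\}$ yields $v_n\in\Wt$ with $0\le v_n\le 1$, $v_n=1$ a.e.\ on $U_n\cap\Omega$ for some open $U_n$ with $K\subset U_n\subset\{\dist(\cdot,K)<\frac1n\}$, $\supp v_n\subset\{\dist(\cdot,K)<\frac1n\}$ and $\norm{v_n}_{1,p}<\frac1n$ (the cutoff cost being absorbed by the freedom in $\norm{u}_{1,p}$). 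Approximate $v_n$ in $\Wone$ by $v_{n,m}\in\Wone\cap\Cbar$ with the same range and support constraints, fix $\chi_n\in\Cinfc[\RR^d]$ with $0\le\chi_n\le 1$, $\chi_n\equiv 1$ near $K$ and $\supp\chi_n\subset U_n$, and put $\phi_n:=v_{n,m}\vee\chi_n$ for $m$ large. The key point is that $v_n\vee\chi_n=v_n$ a.e.\ on $\Omega$ --- where $\chi_n>0$ one has $\chi_n\le 1\le v_n$ a.e., and elsewhere $\chi_n=0\le v_n$ --- so by continuity of the lattice operations on $\Wone$, $\norm{v_{n,m}\vee\chi_n}_{1,p}\to\norm{v_n}_{1,p}<\frac1n$ as $m\to\infty$, and a large enough $m$ works; moreover $\phi_n\ge\chi_n\equiv 1$ near $K$ and $\supp\phi_n\subset\{\dist(\cdot,K)<\frac1n\}$, as needed. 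The only care required is the routine bookkeeping that truncation, multiplication by $\Cinfc[\RR^d]$\nobreakdash-functions and suprema preserve membership in $\Wt$, are continuous on $\Wone$, and map $\Wone\cap\Cbar$ into itself.

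Finally the equivalence in the last statement follows from the first part. If $\Hm(S'\symdiff S)=0$, then for any Borel $A\subset\Gamma_\loc$ with $\rcap(A)=0$ the sets $(\Gamma_\loc\setminus S')\cap A$ and $(\Gamma_\loc\setminus S)\cap A$ differ by an $\Hm$\nobreakdash-null set, so $\Hm((\Gamma_\loc\setminus S')\cap A)=\Hm((\Gamma_\loc\setminus S)\cap A)=0$; thus $\Gamma_\loc\setminus S'$ is $p$\nobreakdash-admissible. Conversely, if $\Gamma_\loc\setminus S'$ is $p$\nobreakdash-admissible and $S'$ is relatively $p$\nobreakdash-polar Borel, then $S\setminus S'$ is a relatively $p$\nobreakdash-polar Borel subset of $\Gamma_\loc\setminus S'$, so $\Hm(S\setminus S')=0$; and $S'\setminus S$ is a relatively $p$\nobreakdash-polar Borel subset of $\Gamma_\loc\setminus S$, so $\Hm(S'\setminus S)=0$ by the $p$\nobreakdash-admissibility of $\Gamma_\loc\setminus S$ established above. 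Hence $\Hm(S'\symdiff S)=0$.
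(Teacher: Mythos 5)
Your proof is correct, and its overall skeleton coincides with the paper's: a maximal relatively $p$-polar Borel set $S$ (your first paragraph is exactly the construction behind Proposition~\ref{prop:S-polar}, and your maximality property of $S$ is literally the $p$-admissibility of $\Gamma_\loc\setminus S$), the two inclusions $\Hm(\Gamma_\sing^p\setminus S)=0$ and $\Hm(S\setminus\Gamma_\sing^p)=0$ (Lemmas~\ref{lem:Gs-subset-S} and~\ref{lem:S-subset-Gs}), and then the equivalence. The one place where you genuinely diverge is the inclusion $\Hm(S\setminus\Gamma_\sing^p)=0$. The paper invokes \cite[Proposition~3.5]{Bie09:relcap-usem} to obtain $u_n\in\Wone\cap\Cbar$ with $u_n=1$ on compacts exhausting $S$, $u_n=0$ on $\Gamma_\nloc$ and small norm, and then identifies the limit of the traces as $\one_S$ by combining relatively $p$-quasi-everywhere convergence with the admissibility of $\Gamma_\loc\setminus S$. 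You instead build the test functions by hand from the definition of $\rcap$ (truncation, plateau cut-off, approximation from $\Wt$, and the supremum with a smooth plateau $\chi_n$ --- the observation that $v_n\vee\chi_n=v_n$ a.e.\ on $\Omega$ is exactly the right device to force the value $1$ near $K$ without increasing the norm in the limit), and you identify the limit trace as $\one_K$ by shrinking the supports to $K$ rather than via admissibility. This buys self-containedness: for this direction you need neither the cited proposition nor the admissibility of $\Gamma_\loc\setminus S$, only the bare definition of the relative capacity and the continuity of the lattice operations on $\Wone$; the price is the bookkeeping you acknowledge (range and support constraints surviving the approximation from $\Wt$), all of which checks out. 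The remaining ingredients --- quasi-everywhere convergence along subsequences for $\Hm(\Gamma_\sing^p\setminus S)=0$, and the final equivalence --- match the paper's arguments.
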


We split the proof of Theorem~\ref{thm:char-Xs-cap} into several parts.
\begin{proposition}\label{prop:S-polar}
There exists a relatively $p$-polar Borel set $S\subset\Gamma_\loc$ 
such that $\Gamma_\loc\setminus S$ is $p$-admissible.
\end{proposition}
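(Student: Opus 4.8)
The plan is to build $S$ as a countable union of relatively $p$-polar Borel sets, exhausting all the "bad" behaviour with respect to the relative capacity, so that its complement is automatically $p$-admissible. First I would observe that $p$-admissibility of $\Gamma_\loc\setminus S$ is equivalent to the statement that every relatively $p$-polar Borel set $A\subset\Gamma_\loc$ satisfies $\Hm(A\setminus S)=0$, i.e.\ that $S$ carries, up to $\Hm$-null sets, every relatively $p$-polar subset of $\Gamma_\loc$. Since the relative $p$-capacity is a Choquet capacity for $1<p<\infty$ (the very reason $p=1$ was excluded) and $\Hm$ restricted to each compact subset of $\Gamma_\loc$ is a Radon measure, the family of relatively $p$-polar Borel subsets of $\Gamma_\loc$ behaves well under countable unions: a countable union of relatively $p$-polar sets is again relatively $p$-polar by subadditivity of capacity.

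The key step is a maximality/exhaustion argument. Fix an increasing sequence $(K_m)$ of compact sets with $\bigcup_m K_m=\Gamma_\loc$, so that $\Hm\restrict{K_m}$ is a finite Radon measure for each $m$. For each $m$, consider
\[
    s_m := \sup\bigl\{\Hm(A)\setmid\text{$A\subset K_m$ Borel, relatively $p$-polar}\bigr\}\le\Hm(K_m)<\infty.
\]
Choose relatively $p$-polar Borel sets $A_{m,k}\subset K_m$ with $\Hm(A_{m,k})\to s_m$ as $k\to\infty$, and put $S_m:=\bigcup_{k}A_{m,k}$. Then $S_m$ is relatively $p$-polar by countable subadditivity of $\rcap$, and $\Hm(S_m)=s_m$ by maximality. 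Finally set $S:=\bigcup_m S_m$; this is a relatively $p$-polar Borel subset of $\Gamma_\loc$. To verify $p$-admissibility of $\Gamma_\loc\setminus S$, let $A\subset\Gamma_\loc$ be Borel with $\rcap(A)=0$. For each $m$, the set $A\cap K_m$ is relatively $p$-polar, so $S_m\cup(A\cap K_m)$ is relatively $p$-polar and contained in $K_m$; by maximality of $s_m$ we get $\Hm(S_m\cup(A\cap K_m))\le s_m=\Hm(S_m)$, hence $\Hm((A\cap K_m)\setminus S_m)=0$, and a fortiori $\Hm((A\cap K_m)\setminus S)=0$. Letting $m\to\infty$ gives $\Hm(A\setminus S)=0$, which is exactly $p$-admissibility of $\Gamma_\loc\setminus S$.

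The main obstacle I anticipate is making sure the supremum $s_m$ is actually attained along a sequence of relatively $p$-polar sets whose union is still relatively $p$-polar — this hinges precisely on countable subadditivity of the relative $p$-capacity and on the outer regularity inherent in the Choquet capacity property, so I would want to cite the relevant stability properties from~\cite{Bie09:relcap-usem,Bie09:cap-mod} carefully; in particular that $\rcap$ is an outer capacity and countably subadditive on Borel (indeed arbitrary) subsets of $\clos{\Omega}$. A minor secondary point is measurability: each $S_m$ is a countable union of Borel sets, hence Borel, and so is $S$, so no descriptive-set-theoretic subtleties arise. Everything else is a routine exhaustion argument along the $\sigma$-compact set $\Gamma_\loc$.
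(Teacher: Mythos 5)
Your argument is correct and is essentially the paper's proof: the paper simply invokes \cite[Proposition~3.6]{AW03}, whose proof is exactly this maximality/exhaustion argument (take the supremum of $\Hm$-measures of relatively $p$-polar sets, realise it by a countable union, and use countable subadditivity of $\rcap$ to conclude that the union is still relatively $p$-polar and absorbs every other relatively $p$-polar set up to $\Hm$-null sets). Your exhaustion by compacts $K_m$ is the right adaptation to the merely $\sigma$-finite setting of $\Gamma_\loc$; note only that the Choquet/outer-regularity properties you mention are not actually needed here --- countable subadditivity of $\rcap$ suffices.
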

\begin{proof}
The proof of~\cite[Proposition~3.6]{AW03} works without change.
\end{proof}

In the remainder of this section let $S$ denote a set as in Proposition~\ref{prop:S-polar}.
\begin{lemma}\label{lem:S-subset-Gs}
One has $\Hm(S\setminus\Gamma^p_\sing)=0$.
\end{lemma}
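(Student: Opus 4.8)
The plan is to reduce the assertion to the claim that $\one_K\in Z^p$ for every compact, relatively $p$-polar set $K\subset\Gamma_\loc$, and then to establish that claim by a capacity argument. For the reduction: $\Hm$ is a locally finite Borel measure on the $\sigma$-compact space $\Gamma_\loc$, hence inner regular by compact sets on the members of a compact exhaustion of $\Gamma_\loc$, so $\Hm(S\setminus\Gamma_\sing^p)=0$ follows once we know $\Hm(K\setminus\Gamma_\sing^p)=0$ for every compact $K\subset S$. Fix such a $K$; it is relatively $p$-polar since $\rcap(K)\le\rcap(S)=0$. By Corollary~\ref{cor:Gamma_s}, $Z^p=\Loneloc[\Gamma_\sing^p]$, and by Proposition~\ref{prop:Xs-ideal} this is a closed ideal of $\Loneloc$; thus $\one_K\in Z^p$ would give $\one_K=0$ $\Hm$-a.e.\ on $\Gamma_\loc\setminus\Gamma_\sing^p$, that is, $\Hm(K\setminus\Gamma_\sing^p)=0$. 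So it remains to show $\one_K\in Z^p$.

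To produce a candidate approximating sequence, use $\rcap(K)=0$ to choose $u_n\in\Wt$ and open sets $V_n\supset K$ with $u_n\ge 1$ a.e.\ on $V_n\cap\Omega$ and $\norm{u_n}^p_{1,p}\to 0$. Replacing $u_n$ by $0\vee u_n\wedge 1$ (still in $\Wt$) we may assume $0\le u_n\le 1$ and $u_n=1$ a.e.\ on $V_n\cap\Omega$; note $\abs{V_n\cap\Omega}\le\norm{u_n}^p_{1,p}\to 0$. Since $K\subset\Gamma_\loc$ is compact, $\dist(K,\Gamma_\nloc)>0$, so we may fix a bounded open $U\subset\RR^d$ with $\dist(U,\Gamma_\nloc)>0$ and an $\eta\in\Cinfc[U]$ with $0\le\eta\le 1$ and $\eta=1$ on an open neighbourhood $U_0$ of $K$; after replacing $u_n$ by $\eta u_n\in\Wt$ and $V_n$ by $V_n\cap U_0$, the properties above persist, and in addition $\supp u_n\Subset U$ and $V_n\subset U_0$. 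Finally, since $\Wt=\clos{\Wone\cap\Cbar}$, we may approximate each $u_n$ in $\Wone$ by $w_n\in\Wone\cap\Cbar$ arbitrarily well; after truncating to $[0,1]$ and multiplying by a fixed cut-off equal to $1$ on $\supp\eta$, we may take $0\le w_n\le 1$ and $\supp w_n\Subset U$, and, choosing the approximants close enough, $w_n\to 0$ in $\Wone$.

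It remains to arrange that the boundary traces $\restrict{w_n}{\Gamma}$ converge in $\Loneloc$ to some $\phi$ with $\phi\ge c\,\one_K$ for a constant $c>0$: granting this, $\phi\in Z^p$ by the very definition of $Z^p$, and since $\phi=0$ $\Hm$-a.e.\ on $\Gamma_\loc\setminus\Gamma_\sing^p$ while $c\,\one_K\le\phi$, the ideal property forces $\one_K\in Z^p$. This is the main obstacle. Plain $\Wone$-convergence $w_n\to 0$ controls $\restrict{w_n}{\Gamma}$ not at all — this failure of continuity of the trace map is precisely the nonuniqueness phenomenon under study — and $K$ lies in the part $S$ of the boundary on which relatively $p$-polar sets may carry positive $\Hm$-measure, so one cannot discard relatively $p$-polar exceptional sets here. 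To surmount this I would invoke the relative $p$-capacity theory of~\cite{Bie09:relcap-usem,Bie09:cap-mod}: the relatively $p$-quasi-continuous representative $\tilde u_n$ of $u_n$ equals $1$ relatively $p$-quasi everywhere on the open set $V_n\cap\clos\Omega\supset K$, and the approximants $w_n$ can be chosen so as to converge to $\tilde u_n$ relatively $p$-quasi everywhere on $\clos\Omega$ along a subsequence; combining this across $n$ by a diagonal argument, and exploiting that the neighbourhoods $V_n$ close down onto $K$ while $\abs{V_n\cap\Omega}\to 0$, one transfers the interior normalisation $u_n=1$ near $K$ into the required lower bound on the $\Loneloc$-limit of the traces over $K$. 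This transfer is the technical heart of the argument, and the only point where more than the definition of the relative capacity is used.
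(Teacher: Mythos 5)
There is a genuine gap, and you have located it yourself: the ``transfer'' of the interior normalisation $u_n=1$ near $K$ to a lower bound for the boundary traces is the entire content of the lemma, and the mechanism you propose for it cannot work. Your plan is to pass from the capacity test functions $u_n\in\Wt$ to continuous approximants $w_n\in\Wone\cap\Cbar$ and to control $w_n$ on $K$ via relatively $p$-quasi-everywhere convergence of relatively $p$-quasi-continuous representatives. But $K$ is itself relatively $p$-polar, so every statement that holds only relatively $p$-quasi everywhere is vacuous \emph{on $K$}: the exceptional set may contain all of $K$. In particular, ``$\tilde u_n=1$ relatively $p$-quasi everywhere on $V_n\cap\clos{\Omega}$'' and ``$w_n\to\tilde u_n$ relatively $p$-quasi everywhere'' together say nothing about the values of $w_n$ at any point of $K$, and hence give no lower bound $\phi\ge c\,\one_K$ for the $\Loneloc$-limit of the traces. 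You even remark that one cannot discard relatively $p$-polar exceptional sets here, and then the proposed argument does exactly that. The reduction to compact $K\subset S$, the truncation and cut-off manipulations, and the use of the ideal property at the end are all fine; the core step is simply not proved.

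The missing ingredient is a genuine theorem about the relative capacity, namely \cite[Proposition~3.5]{Bie09:relcap-usem}: for a compact relatively $p$-polar set $K\subset\Gamma_\loc$ one can choose the test functions already in $\Wone\cap\Cbar$, satisfying $0\le u_n\le 1$, $u_n=1$ \emph{everywhere on $K$} (as continuous functions on $\clos{\Omega}$), $u_n=0$ on $\Gamma_\nloc$, and $\norm{u_n}_{\Wone}\le\frac1n$. This pins the traces down pointwise on $K$, which is exactly what no quasi-everywhere argument can do. The paper then runs essentially your intended endgame, but globally: taking compacts $K_n\uparrow S$ modulo $\Hm$, passing to a subsequence so that $u_n\to0$ relatively $p$-quasi everywhere, using the $p$-admissibility of $\Gamma_\loc\setminus S$ to upgrade this to $\Hm$-a.e.\ convergence there (admissibility is what licenses discarding the polar exceptional set \emph{off} $S$), concluding $u_n\to\one_S$ pointwise $\Hm$-a.e.\ on $\Gamma$, and finishing with dominated convergence to get $\one_S\in Z^p=\Loneloc[\Gamma_\sing^p]$. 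Without the continuity-up-to-the-boundary of the capacity test functions, or some equivalent substitute, your argument does not close.
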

\begin{proof}
This uses part of the argument in the proof of~\cite[Theorem~3.7]{AW03}.

Let $(K_n)$ be an increasing sequence of compact subsets of $S$ such that $\Hm(S\setminus\bigcup_{n\in\NN} K_n)=0$.
As $\rcap{K_n}=0$, by~\cite[Proposition~3.5]{Bie09:relcap-usem} we can find a sequence $(u_n)$ in $\Wone\cap\Cbar$ 
such that $0\le u_n\le 1$, $u_n=1$ on $K_n$, $u_n=0$ on $\Gamma_\nloc$ and $\norm{u_n}_{\Wone}\le\frac{1}{n}$.
After going to a subsequence, we may assume that $u_n\to 0$ relatively $p$-quasi everywhere.
As $\Gamma_\loc\setminus S$ is $p$-admissible, it follows that $u_n\to 0$ pointwise $\Hm$-a.e.\ on $\Gamma_\loc\setminus S$. Hence $u_n\to\one_S$ pointwise $\Hm$-a.e.\ on $\Gamma$.
By Lebesgue dominated convergence, one deduces that $\restrict{u_n}{\Gamma}\to\one_S$ in $\Loneloc$.
This shows that $\one_S\in Z^p=\Loneloc[\Gamma_\sing^p]$; consequently, $\Hm(S\setminus\Gamma_\sing^p)=0$.
\end{proof}

\begin{lemma}\label{lem:Gs-subset-S}
One has $\Hm(\Gamma^p_\sing\setminus S)=0$.
\end{lemma}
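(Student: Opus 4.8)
The plan is to show $\Hm(\Gamma^p_\sing\setminus S)=0$ by combining the two facts already at our disposal: the set $S$ satisfies $\rcap(S)=0$ (it is relatively $p$-polar by Proposition~\ref{prop:S-polar}), and $\Gamma_\loc\setminus S$ is $p$-admissible. The natural strategy is to argue by contradiction: suppose $\Hm(\Gamma^p_\sing\setminus S)>0$. Then the approximative trace is nonunique on the Borel set $A:=\Gamma_\loc\setminus S$, since $\Hm(A\cap\Gamma^p_\sing)>0$. We may therefore invoke Lemma~\ref{lem:nonuni-witness} to obtain a compact $K\subset A$ with $\Hm(K)>0$, a smooth bounded open $U\supset K$ with $\dist(U,\Gamma_\nloc)>0$, a function $\phi\in L^1(\Gamma)$, and a sequence $(u_n)$ in $\Wone\cap\Cbar$ with $u_n\to 0$ in $\Wone$, $0\le u_n\le 1$, $\supp u_n\Subset U$, $u_n=1$ on $K$ and $\restrict{u_n}{\Gamma}\to\phi$ in $L^1(\Gamma)$. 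In particular $\phi\ge\one_K$ so $\phi\neq 0$.

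The key step will be to pass to a subsequence that converges relatively $p$-quasi everywhere on $\clos\Omega$, or at least $\Hm$-a.e.\ on $\Gamma_\loc$, using the properties of the relative capacity. Concretely, since $u_n\to 0$ in $\Wone$ and each $u_n\in\Wt$ (as $\Wone\cap\Cbar\subset\Wt$), after passing to a subsequence the standard relative-capacity argument (as in~\cite[Proposition~3.5]{Bie09:relcap-usem} or the Borel--Cantelli estimate underlying it) produces a relatively $p$-polar Borel set $N\subset\clos\Omega$ such that $u_n\to 0$ pointwise on $\clos\Omega\setminus N$. Intersecting with $\Gamma_\loc$, we get a relatively $p$-polar Borel set $N'\subset\Gamma_\loc$ off which $u_n\to 0$ pointwise. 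Since $\Gamma_\loc\setminus S$ is $p$-admissible and $\rcap(N')=0$, it follows that $\Hm((\Gamma_\loc\setminus S)\cap N')=0$, hence $u_n\to 0$ pointwise $\Hm$-a.e.\ on $\Gamma_\loc\setminus S$, and in particular $\Hm$-a.e.\ on $K$ (recall $K\subset A=\Gamma_\loc\setminus S$).

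On the other hand $u_n=1$ on $K$ for every $n$, so $u_n\not\to 0$ at every point of $K$, while $\Hm(K)>0$. This contradicts the conclusion that $u_n\to 0$ pointwise $\Hm$-a.e.\ on $K$. Therefore $\Hm(\Gamma^p_\sing\setminus S)=0$, as claimed. I expect the main obstacle to be the careful extraction of the relatively quasi-everywhere convergent subsequence: one needs the relative $p$-capacity to behave like a genuine (outer, Choquet) capacity with a Borel--Cantelli lemma for $\Wt$-norm-convergent sequences, which is precisely why the hypothesis $1<p<\infty$ is in force and why the cited results of~\cite{Bie09:relcap-usem,Bie09:cap-mod} are needed; once that convergence is in hand, the $p$-admissibility of $\Gamma_\loc\setminus S$ does the rest essentially for free. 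An alternative that avoids Lemma~\ref{lem:nonuni-witness} is to argue directly: take $K\subset\Gamma^p_\sing\setminus S$ compact with $\Hm(K)>0$, use $\one_K\in Z^p$ together with the definition of $Z^p$ to get an approximating sequence, and run the same relative-capacity/admissibility argument; this is slightly more self-contained but amounts to the same computation.
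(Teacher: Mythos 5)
Your proof is correct and rests on exactly the same mechanism as the paper's: extract a subsequence converging to $0$ relatively $p$-quasi everywhere and let the $p$-admissibility of $\Gamma_\loc\setminus S$ upgrade this to $\Hm$-a.e.\ convergence off $S$, which is incompatible with a nontrivial trace there. The paper simply runs this directly on a sequence realising $\one_{\Gamma_\sing^p}\in Z^p$ (essentially the alternative you sketch in your last sentence) instead of detouring through Lemma~\ref{lem:nonuni-witness} and a contradiction, but the content is the same.
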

\begin{proof}
The function $\one_{\Gamma_\sing^p}$ is an approximative trace of the zero function in $\Wone$.
So there exists a sequence $(u_n)$ in $\Wone\cap\Cbar$ such that $\restrict{u_n}{\Gamma}\to\one_{\Gamma_\sing^p}$ in $\Loneloc$ and $u_n\to0$ in $\Wone$.
After passing to a subsequence, we may suppose that $\restrict{u_n}{\Gamma}\to\one_{\Gamma_\sing^p}$ pointwise $\Hm$-a.e.\ on $\Gamma$ and $u_n\to 0$ relatively $p$-quasi everywhere on $\clos{\Omega}$.
As $\Gamma_\loc\setminus S$ is $p$-admissible, we obtain that $u_n\to 0$ pointwise $\Hm$-a.e.\ on $\Gamma_\loc\setminus S$.
Hence $\Hm(\Gamma_\sing^p\setminus S)=\Hm(\Gamma_\sing^p\cap\Gamma_\loc\setminus S)=0$.
\end{proof}

\begin{proof}[Proof of Theorem~\ref{thm:char-Xs-cap}]
It follows from Lemmas~\ref{lem:S-subset-Gs} and~\ref{lem:Gs-subset-S} that $\Hm(\Gamma_\sing^p\symdiff S)=0$.
The same holds for $S'\subset\Gamma_\loc$ if $S'$ is relatively $p$-polar such that $\Gamma_\loc\setminus S'$ is $p$-admissible.
Conversely, if $S'\subset\Gamma_\loc$ is relatively $p$-polar with $\Hm(S\symdiff S')=0$,
it is obvious that $S'$ is $p$-admissible.
\end{proof}

The relative $p$-capacity is equivalent to the $p$-capacity as long as one only considers subsets that stay a fixed positive distance away from the boundary. Moreover, if $\Omega$ is Lipschitz and thus has the extension property, then the relative $p$-capacity is equivalent to the $p$-capacity on the whole of $\clos{\Omega}$.
So it is clear that the relative $p$-capacity is $p$-dependent and that there are compact sets that are relatively polar for some small $p$, but not for some larger $p$, since this is well-known for the $p$-capacity; see for example~\cite[Theorem~5.5.1]{AH96}. Moreover, one can readily give examples where a compact subset $K$ of the boundary is relatively $p$-polar only for certain values of $p$ by considering $\Wone[p]$ for $\Omega=\RR^d\setminus K$.
In the present context it is more interesting, however, to give an example where the corresponding part of the boundary is not an $\Hm$-nullset. This will be done in Example~\ref{ex:p-dep}.

\section{A geometric criterion for the uniqueness of the approximative trace}\label{sec:geo-cond}

The results of this section imply that the approximative trace is unique if $\Omega$ has continuous boundary,
which was stated as an open problem in~\cite[Section~9]{AtE2011:DtN}. 
In fact, the main result is Theorem~\ref{thm:sing-dens-0}, which gives a criterion for the uniqueness of the approximative trace that applies in the case of continuous boundary.

At the core of the proof one needs to control the behaviour locally on the locally finite part of the boundary through the behaviour of the Sobolev function in the interior.
While the setup of the proof of Theorem~\ref{thm:sing-dens-0} is very natural, it requires some technical results from geometric measure theory.

We first recall a few facts about densities, the measure theoretic boundary and about $(d-1)$-rectifiable sets.
The ambient space will always be $\RR^d$ in the following. So $B(x,r)$ denotes the open Euclidean ball about $x$ with radius $r$ in $\RR^d$ and $\mathbb{S}^{d-1}:=\partial B(0,1)$.
We denote the Lebesgue measure in $\RR^d$ by $\meas{\mathord{\cdot}}$.
For a Borel measurable set $A\subset\RR^d$ and a point $x\in\RR^d$ the \emphdef{upper} and \emphdef{lower
densities} of $A$ at $x$ are defined by
\[
    \overline{D}(A,x) := \limsup_{r\to 0+}\frac{\meas{A\cap B(x,r)}}{\meas{B(x,r)}}
\]
and
\[
    \underline{D}(A,x) := \liminf_{r\to 0+}\frac{\meas{A\cap B(x,r)}}{\meas{B(x,r)}},
\]
respectively. If upper and lower densities agree, their common value is called the \emphdef{density} of $A$ at $x$ and denoted by $D(A,x)$. For all $t\in[0,1]$ we write
\[
    A^t := \{x\in\RR^d: D(A,x)=t\}
\]
for the set of points where $A$ has density $t$. It is easily verified that the sets $A^t$ are Borel, see e.g.~\cite[Lemma~2.1]{Fal86}.
The \emphdef{measure theoretic boundary} of $A$ is defined as $\mbdy A :=\RR^d\setminus(A^0\cup A^1)$.
Clearly $\mbdy A\subset\partial A$.
Similarly we define the \emphdef{$(d-1)$-dimensional density} of $A$ at $x$ by
\[
    \Theta(A,x) := \lim_{r\to 0+}\frac{\Hm(A\cap B(x,r))}{\omega r^{d-1}}
\]
provided the limit exists, and analogously the upper and lower $(d-1)$-dimensional densities $\overline{\Theta}(A,x)$ and $\underline{\Theta}(A,x)$; here
\[
    \omega := \omega(d-1) := \frac{\pi^{(d-1)/2}}{\Gamma\bigl((d+1)/2\bigr)} 
\]
is the volume of the unit ball in $\RR^{d-1}$. 
A Borel measurable subset $A\subset\RR^d$ is called \emphdef{rectifiable} (or, more specifically, countably $(d-1)$-rectifiable) if there exists a sequence $(f_k)$ of Lipschitz functions
$f_k\colon\RR^{d-1}\to\RR^d$ such that 
\[
    \Hm\Bigl(A\setminus\bigcup_{k\in\NN} f_k(\RR^{d-1})\Bigr) = 0.
\]
Conversely, $A$ is called \emphdef{purely unrectifiable} if for any Lipschitz function 
$f\colon\RR^{d-1}\to\RR^d$ one has $\Hm(A\cap f(\RR^{d-1}))=0$.
Rectifiable sets of finite Hausdorff measure are the measure theoretic analogue of smooth manifolds.
They exhibit a plethora of additional density and differentiability properties.
The ones that we will need later on are collected in the following lemma.

\begin{lemma}\label{lem:reg-rect}
Let $K\subset\RR^d$ be a rectifiable Borel set with $\Hm(K)<\infty$. 
Then $\Hm$-a.e.\ $z\in K$ has the following properties:
\begin{arenum}
\item\label{en:rect-dens1}
$\Theta(K,z)=1$.
\item\label{en:rect-tanplane}
There exists a unique affine $(d-1)$-plane $V$ through $z$ with normal $\nu\in\mathbb{S}^{d-1}$ such that for all $\eps>0$ one has
\[
    \lim_{r\to 0+}\frac{\Hm(K\cap B(z,r)\setminus N_{\eps r}(V))}{r^{d-1}}=0,
\]
where $N_{\eps r}(V) := \{x\in\RR^d : \dist(x,V)\le\eps r\}$.
\item\label{en:rect-lipgraph}
For all $\eps>0$ there exist a Borel set $D\subset V$ with $z\in D$ and an $\eps$-Lipschitz function $f\colon D\to\linspan\{\nu\}$
such that $\underline{\Theta}(D,z)\ge1-\eps$, $f(z)=0$, $\graph f \subset K$ and $\Theta(K\setminus\graph f,z)=0$.
\end{arenum}
\end{lemma}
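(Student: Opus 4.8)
The plan is to deduce all three items from the classical structure theory of rectifiable sets, essentially the ``strong version'' of the Besicovitch–Federer structure theorem together with the rectifiability criterion via approximate tangent planes; all of this can be found in standard references such as~\cite{EG92} or Falconer~\cite{Fal86}. Item~\ref{en:rect-dens1} is the density theorem for rectifiable sets: for a countably $(d-1)$-rectifiable Borel set $K$ with $\Hm(K)<\infty$ one has $\Theta(K,z)=1$ for $\Hm$-a.e.\ $z\in K$. First I would reduce to the case where $K$ is contained (up to an $\Hm$-nullset) in a countable union of Lipschitz graphs; since each Lipschitz image $f_k(\RR^{d-1})$ can be covered by countably many graphs over coordinate hyperplanes after decomposing the domain, one may write $K=N\cup\bigcup_k G_k$ with $\Hm(N)=0$ and each $G_k$ a Lipschitz graph. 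By a standard $\Hm$-a.e.\ disjointification (replacing $G_k$ by $G_k\setminus\bigcup_{j<k}G_j$) and using that at $\Hm$-a.e.\ point of $G_k$ the other pieces $G_j$ contribute zero $(d-1)$-density — this is where the locality of density and the area formula enter — the density of $K$ at $\Hm$-a.e.\ point of $G_k$ equals the density of $G_k$ there, which is $1$ by the area formula applied to the bi-Lipschitz chart. This handles~\ref{en:rect-dens1}.

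For item~\ref{en:rect-tanplane} I would invoke the existence of \emph{approximate tangent planes}: a countably $(d-1)$-rectifiable set of finite measure possesses an approximate tangent $(d-1)$-plane $V$ (with multiplicity one, given~\ref{en:rect-dens1}) at $\Hm$-a.e.\ point, meaning precisely the conical concentration statement $\lim_{r\to 0+} r^{-(d-1)}\Hm(K\cap B(z,r)\setminus N_{\eps r}(V))=0$ for every $\eps>0$. Again this is proved graph-by-graph: on a $C^1$-rectifiable piece one can even find a genuine $C^1$ chart (after a further Lusin-type refinement, Whitney extension shows rectifiable sets are $\Hm$-a.e.\ covered by $C^1$ submanifolds), and for a $C^1$ graph the tangent plane in the classical sense is exactly such an approximate tangent plane; uniqueness of $V$ (hence of the unit normal $\nu$, up to sign) follows from the density-one property, since two distinct planes through $z$ would force positive density in the symmetric difference of the slabs. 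The normal $\nu\in\mathbb S^{d-1}$ is then fixed by an (arbitrary but measurable) choice of orientation.

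Item~\ref{en:rect-lipgraph} is the local Lipschitz-graph representation and is the part I expect to be the main obstacle, since it requires quantitative control rather than just an a.e.\ limit. The strategy: fix $\eps>0$ and work at a point $z$ that is simultaneously a density-one point of $K$ and a point of approximate tangency with plane $V=z+\nu^\perp$. Write points near $z$ as $(y,t)$ with $y\in V$, $t\in\linspan\{\nu\}$. Consider the ``bad'' set $B$ of points of $K$ near $z$ lying outside the cone $\{|t|\le\eps|y-\,\text{proj}_V(\cdot)|\}$; by~\ref{en:rect-tanplane} this set has $(d-1)$-density zero at $z$. On the complement, the projection $\pi_V\colon K\to V$ is injective on a large subset — one removes the further ``vertical overlap'' set where two points of $K$ project to the same $y$ within the cone, which again has density zero at a typical point because such overlaps create a portion of $K$ sitting in a thin double cone whose measure is controlled by the cone slope. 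What remains is a graph of an $\eps$-Lipschitz function $f$ over a Borel set $D=\pi_V(\text{good part})\subset V$; the $\eps$-Lipschitz bound is built into the cone condition, $f(z)=0$ after translating, $\graph f\subset K$ by construction, and $\Theta(K\setminus\graph f,z)=0$ since $K\setminus\graph f$ is contained in the union of the two density-zero sets just described. Finally $\underline\Theta(D,z)\ge 1-\eps$: because $\Theta(K,z)=1$ and $\pi_V$ restricted to the good part is $1$-Lipschitz with Lipschitz inverse of constant $(1+\eps^2)^{1/2}$ on the cone, comparing $\Hm$-masses of $\graph f\cap B(z,r)$ and $D\cap B(z,r)$ and using that the discarded part of $K$ has density zero yields $\underline\Theta(D,z)\ge (1+\eps^2)^{-1/2}(1-o(1))\ge 1-\eps$ for $\eps$ small, which after shrinking $\eps$ at the outset is exactly the claimed bound. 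The delicate point throughout is that all three ``error'' sets must have vanishing $(d-1)$-density at the \emph{same} $\Hm$-a.e.\ point, which follows by taking a countable intersection of full-measure sets over $\eps\in\{1/n\}$ and using that countably many $\Hm$-nullsets of exceptional points union to a nullset.
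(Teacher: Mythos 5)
Items~\ref{en:rect-dens1} and~\ref{en:rect-tanplane} of your proposal are fine and essentially coincide with the paper's treatment, which simply cites the density theorem and the existence and uniqueness of approximate tangent planes from~\cite[Theorems~16.2 and~15.19]{Mat95}; your graph-by-graph sketches are the standard proofs of those results.

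The gap is in item~\ref{en:rect-lipgraph}. You discard the points of $K$ outside the cone with vertex $z$ over $V$, then the ``vertical overlaps'', and assert that the remainder is the graph of an $\eps$-Lipschitz function with ``the $\eps$-Lipschitz bound built into the cone condition''. It is not: the cone condition is anchored at the single point $z$ and only yields $\abs{f(y)-f(z)}\le\eps\abs{y-z}$, whereas the Lipschitz property of $f$ on $D$ is the pairwise estimate $\abs{f(y_1)-f(y_2)}\le\eps\abs{y_1-y_2}$ for \emph{all} $y_1,y_2\in D$. Nothing in~\ref{en:rect-tanplane} excludes two points $(y_1,t_1),(y_2,t_2)\in K$ with $t_1\approx\eps\abs{y_1}$, $t_2\approx-\eps\abs{y_2}$ and $\abs{y_1-y_2}\ll\abs{y_1}$; both lie in the cone, yet $\abs{t_1-t_2}\gg\eps\abs{y_1-y_2}$. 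For the same reason your claim that the overlap set has $(d-1)$-density zero at $z$ is not justified by the cone geometry alone. The missing ingredient is a \emph{global} Lipschitz bound: up to an $\Hm$-nullset, a rectifiable set decomposes into countably many graphs of Lipschitz functions with arbitrarily small Lipschitz constant (\cite[Proposition~2.76]{AFP00}, combined with~\cite[Theorem~6.2]{Mat95} to localise densities). You invoke exactly this kind of refinement in item~\ref{en:rect-tanplane} but do not use it in item~\ref{en:rect-lipgraph}. The paper's proof does: it reduces to $K$ being contained in the graph of an $\eps'$-Lipschitz function $g$ over a coordinate hyperplane, picks $z=x+g(x)$ with $x$ a point of differentiability of $g$ (Rademacher) and of density one of its domain, and re-parametrises the graph over the tangent plane $V$ at $z$, obtaining a $3\eps'$-Lipschitz $f\colon D\to\linspan\{\nu\}$; the pairwise bound then comes for free from the global Lipschitz bound on $g$. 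With that repair, your computation of $\underline{\Theta}(D,z)\ge 1-\eps$ and of $\Theta(K\setminus\graph f,z)=0$ goes through as you indicate.
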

\begin{proof}
The density property~\ref{en:rect-dens1} is established in~\cite[Theorem~16.2]{Mat95},
while the existence of a unique approximate tangent $(d-1)$-plane for $\Hm$-a.e.\ $z\in K$ as in~\ref{en:rect-tanplane} follows from~\cite[Theorem~15.19]{Mat95}.

Let $\eps>0$ and $\eps'\in(0,\frac{\eps}{3})$ be sufficiently small.
Due to~\cite[Proposition~2.76]{AFP00} 
and~\cite[Theorem~6.2\,(2)]{Mat95}, we may suppose without loss of generality that $K$ is the graph of an \n[\eps']Lipschitz function $g\colon\RR^{d-1}\to\RR$ on a Borel set $\tilde{D}\subset\RR^{d-1}\wedgeq\RR^{d-1}\times\{0\}$.
Then for $\Hm$-a.e.\ $z\in K$ with $z=x+g(x)$ for an $x\in\tilde{D}$ the first two properties hold, $\Theta(\tilde{D},x)=1$ and the function $g$ is differentiable at $x$ by Rademacher's theorem. Fix such a $z=x+g(x)\in K$. 
Then by~\cite[paragraph after Definition~2.60]{AFP00}
the set $K$ is contained in the graph of an $(3\eps')$-Lipschitz function $f\colon V\to\linspan\{\nu\}^\perp$, where $V=z+\{(h,g'(x)h) : h\in\RR^{d-1}\}$ is both the unique approximate tangent plane of $K$ at $z$ and the classical tangent plane of $f$ at $x$, cf.~\cite[Remark~2.84]{AFP00}. 
Let $D:=\{x'\in V: x'+f(x')\in K\}$. Then $z\in D$ and $f(z)=0$.
Using that $f$ is $(3\eps')$-Lipschitz, it follows from~(1) that $\underline{\Theta}(D,z)\ge 1-\eps$ provided $\eps'$ is sufficiently small.
\end{proof}
\begin{remark}
In the following we prefer to use cubes instead of balls for the densities and properties in Lemma~\ref{lem:reg-rect}. More specifically, for a $z\in K$ with the three properties in Lemma~\ref{lem:reg-rect}, we consider cubes centred at $z$ in a fixed local orthogonal coordinate system $(b_1,\dots, b_{d-1},\nu)$ at $z$, i.e.\ $b_1,\dots,b_{d-1}$ are orthonormal and span $V-z$.
It is clear that both~\ref{en:rect-tanplane} and~\ref{en:rect-lipgraph} stay valid if balls are replaced by these cubes.
Naturally, one has to adapt the scaling accordingly and use $2^{d-1}$ instead of $\omega$ in the $(d-1)$-dimensional densities.
Since in~\ref{en:rect-lipgraph} one may use any $\eps>0$, it follows by an easy estimate that also~\ref{en:rect-dens1} holds accordingly for the aforementioned cubes.
\end{remark}

\begin{lemma}The set $\Gamma_\loc$ can be written as a disjoint union of two Borel sets $\Gamma_\rect$ and $\Gamma_\urect$ where
$\Gamma_\rect$ is rectifiable and $\Gamma_\urect$ is purely unrectifiable.
Moreover, one has $\Theta(\Gamma_\urect,z)=0$ for $\Hm$-a.e.\ $z\in\RR^d\setminus\Gamma_\urect$.
\end{lemma}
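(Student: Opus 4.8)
The plan is to decompose $\Gamma_\loc$ using the standard decomposition of a set of finite (or locally finite) Hausdorff measure into its rectifiable and purely unrectifiable parts, and then to invoke a Besicovitch-type density result for purely unrectifiable sets. Since $\Gamma_\loc$ is $\sigma$-compact, write it as an increasing union $\bigcup_n E_n$ with $\Hm(E_n)<\infty$. For each $E_n$ the decomposition theorem for sets of finite $\Hm$-measure (see e.g.~\cite[Theorem~15.6]{Mat95}, or~\cite[Proposition~2.66 and the discussion of the decomposition theorem]{AFP00}) gives a Borel set $R_n\subset E_n$ that is rectifiable and a Borel set $U_n=E_n\setminus R_n$ that is purely unrectifiable. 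One has to check that these decompositions are compatible as $n$ grows; this is where I would be slightly careful. A clean way is to first fix the decomposition abstractly: define $\Gamma_\rect$ to be (a Borel representative of) the union of $f_k(\RR^{d-1})\cap\Gamma_\loc$ over a countable family of Lipschitz maps $f_k$ chosen to make $\Hm(\Gamma_\rect)$ "maximal" within each $E_n$, and set $\Gamma_\urect := \Gamma_\loc\setminus\Gamma_\rect$. Concretely, for each $n$ pick an increasing sequence of rectifiable subsets of $E_n$ whose measures approach $\sup\{\Hm(R):R\subset E_n\text{ rectifiable Borel}\}$, take the union, and then take the union over $n$; the result $\Gamma_\rect$ is rectifiable (a countable union of rectifiable sets is rectifiable), Borel, and by maximality $\Gamma_\urect\cap E_n$ contains no rectifiable subset of positive measure, i.e.\ is purely unrectifiable; hence $\Gamma_\urect$ itself is purely unrectifiable.

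For the second assertion, the key input is the density theorem for purely unrectifiable sets: if $F\subset\RR^d$ is purely unrectifiable with $\Hm(F)<\infty$, then the $(d-1)$-dimensional density $\Theta(F,z)$ fails to exist (equivalently $\underline\Theta(F,z)<1$, and in fact the classical Besicovitch--Federer lower bound $\underline\Theta(F,z)\le\tfrac34$ or similar holds) for $\Hm$-a.e.\ $z\in F$; see~\cite[Theorem~17.6]{Mat95}. What I actually need, though, is slightly different: I need $\Theta(\Gamma_\urect,z)=0$ for $\Hm$-a.e.\ $z\notin\Gamma_\urect$. Split $\RR^d\setminus\Gamma_\urect$ into $\Gamma_\rect$, the rest of $\Gamma$, and the complement of $\Gamma$. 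On $\RR^d\setminus\Gamma$, which is open, $\Gamma_\urect$ is locally absent, so the density is trivially $0$. On $\Gamma\setminus\Gamma_\loc=\Gamma_\nloc$ there is nothing to prove since that set... actually one must restrict attention to where $\Hm$ is locally finite, but $\Theta(\Gamma_\urect,z)$ is still meaningful; on $\Gamma_\nloc$ one argues locally: fix a compact $M\subset\Gamma_\loc$, then $\Theta(M\cap\Gamma_\urect,z)=0$ for $z$ outside a neighbourhood of $M$, and since $\Gamma_\nloc$ is at positive distance from... no — rather, cover $\Gamma_\loc$ by countably many compacts and use that each compact piece contributes zero density at $\Hm$-a.e.\ point of its complement, via the standard fact that for a finite Borel (Radon) measure $\mu$ supported on a Borel set $F$, one has $\overline\Theta(\mu,z)=0$ for $\mathcal H^{d-1}$-a.e.\ $z\notin F$ (this follows from~\cite[Theorem~6.9 or 6.2]{Mat95}, comparing $\mu$ with $\Hm$ restricted to $F$). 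The main work is on $\Gamma_\rect$: there, at $\Hm$-a.e.\ $z$, Lemma~\ref{lem:reg-rect}\ref{en:rect-dens1} gives $\Theta(\Gamma_\rect,z)=1$, and also $\Theta(\Gamma,z)\le\overline\Theta(\Gamma,z)$ is controlled since $\Gamma$ has locally finite $\Hm$-measure near $z$ — but I need that the extra mass of $\Gamma_\urect$ near such a $z$ vanishes. This follows by writing $\Gamma_\loc\cap B(z,r)=(\Gamma_\rect\cup\Gamma_\urect)\cap B(z,r)$ and using that at $\Hm$-a.e.\ point of a rectifiable set the ambient set $\Gamma_\loc$ also has density exactly $1$ (again Besicovitch's theorem, applied to $\Hm\restrict\Gamma_\loc$ locally, together with the tangent-plane structure): the upper density of $\Gamma_\loc$ at $\Hm$-a.e.\ $z$ is at most $1$, forcing $\overline\Theta(\Gamma_\urect,z)\le\overline\Theta(\Gamma_\loc,z)-\Theta(\Gamma_\rect,z)=0$.

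Thus the proof assembles as follows: (i) construct $\Gamma_\rect,\Gamma_\urect$ by the maximal-rectifiable-subset procedure over an exhaustion of $\Gamma_\loc$ by finite-measure pieces, verifying Borel measurability, rectifiability, and pure unrectifiability; (ii) for $\Hm$-a.e.\ $z\in\Gamma_\rect$ apply Lemma~\ref{lem:reg-rect} together with the upper-density bound $\overline\Theta(\Gamma_\loc,z)\le1$ (valid $\Hm$-a.e.\ by Besicovitch, since $\Hm\restrict\Gamma_\loc$ is locally finite) to conclude $\Theta(\Gamma_\urect,z)=0$; (iii) for $\Hm$-a.e.\ $z\in\RR^d\setminus\Gamma$ note $z$ has a neighbourhood disjoint from $\Gamma_\urect$, so the density is $0$; (iv) for $\Hm$-a.e.\ $z\in\Gamma\setminus\Gamma_\rect$ with $z\notin\Gamma_\urect$, i.e.\ $z\in\Gamma_\nloc$, use the Radon-measure comparison (the measure $\Hm\restrict(K\cap\Gamma_\urect)$ for compact $K\subset\Gamma_\loc$ has zero upper $(d-1)$-density $\Hm$-a.e.\ off $K$) and a countable exhaustion of $\Gamma_\loc$ to get $\Theta(\Gamma_\urect,z)=0$.

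I expect the main obstacle to be step (i) — pinning down a single Borel decomposition that works simultaneously on all of the non-$\sigma$-finite-looking but actually $\sigma$-finite set $\Gamma_\loc$, rather than getting different decompositions on different pieces — and, within step (ii), the precise justification that the presence of the purely unrectifiable part cannot inflate the density of $\Gamma_\loc$ at a typical point of $\Gamma_\rect$ above $1$; this rests on the Besicovitch density theorem for the measure $\Hm$ restricted to a set of finite measure, which must be applied locally because $\Hm\restrict\Gamma$ is only locally finite. Everything else is a routine patching-together of $\Hm$-null sets over a countable cover.
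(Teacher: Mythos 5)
Your overall route is the paper's: the paper proves this lemma in two lines by citing the decomposition theorem for sets of finite $\Hm$-measure (\cite[Theorem~15.6]{Mat95} or \cite[Theorem~5.7]{DeLe2008}) and the density theorem \cite[Theorem~6.2]{Mat95}, localised via the fact that $\Hm$ is locally finite and $\sigma$-finite on $\Gamma_\loc$. Your step~(i) is a correct reconstruction of the proof of the decomposition theorem: the maximal-rectifiable-subset construction over an exhaustion by finite-measure pieces, together with the maximality argument showing that the complement meets every Lipschitz image in a null set, is exactly how \cite[Theorem~15.6]{Mat95} is proved, and the compatibility worry you raise is correctly resolved by taking the union of the maximisers. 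Your step~(ii) is also correct after localisation, though it is a detour: at $\Hm$-a.e.\ $z\in\Gamma_\rect\subset\Gamma_\loc$ one can pick $r$ with $\Hm(\Gamma\cap B(z,r))<\infty$ and apply \cite[Theorem~6.2\,(1)]{Mat95} directly to $\Gamma_\urect\cap B(z,r)$, which gives $\overline{\Theta}(\Gamma_\urect,z)=0$ at a.e.\ point outside that set; your argument via $\Theta(\Gamma_\rect,z)=1$ and $\overline{\Theta}(\Gamma_\loc,z)\le 1$ reaches the same conclusion but needs two density theorems instead of one. Step~(iii) is trivially fine since $\Gamma$ is closed.

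The genuine gap is step~(iv), the points of $\Gamma_\nloc$. Knowing $\overline{\Theta}(K_n\cap\Gamma_\urect,z)=0$ for every member $K_n$ of a countable exhaustion of $\Gamma_\loc$ does \emph{not} yield $\overline{\Theta}(\Gamma_\urect,z)=0$: the quotient $\Hm(\Gamma_\urect\cap B(z,r))/r^{d-1}$ is an infinite sum of the corresponding quotients for the pieces, and the $\limsup$ as $r\to 0+$ of an infinite sum of nonnegative terms is not bounded by the sum of the $\limsup$s. Each piece can be individually negligible at $z$ (for instance because it stays at positive distance from $z$) while the tail of the sum keeps the density bounded away from $0$, or even infinite. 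At a point of $\Gamma_\nloc$ one cannot localise to finite measure, and the proof of \cite[Theorem~6.2\,(1)]{Mat95} genuinely uses outer regularity of $\restrict{\Hm}{A}$, which holds when $\Hm(A)<\infty$ but fails for merely $\sigma$-finite $A$; one can in fact arrange purely unrectifiable columns accumulating on a segment so that $\overline{\Theta}(\Gamma_\urect,\cdot)=\infty$ on a positive-measure subset of $\Gamma_\nloc$. So no patching of null sets will rescue the argument there. To be fair, the paper's own one-line citation only covers points possessing a ball in which $\Gamma$ has finite measure, i.e.\ $\RR^d\setminus\Gamma$ and $\Gamma_\loc$, and in every later application the density statement is invoked only at points of $\Gamma_\loc$ --- where your argument works. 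But as a proof of the lemma as literally stated, step~(iv) does not go through, and you should either restrict the second assertion to $z\in(\RR^d\setminus\Gamma)\cup\Gamma_\loc$ or supply a genuinely different argument on $\Gamma_\nloc$.
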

\begin{proof}
As $\Hm$ is locally finite and $\sigma$-finite on $\Gamma_\loc$,
the claims follow immediately from~\cite[Theorem~15.6]{Mat95} or~\cite[Theorem~5.7]{DeLe2008}, and~\cite[Theorem~6.2]{Mat95}.
\end{proof}

Next we briefly introduce sets of locally finite perimeter. By Federer's criterion~\cite[Theorem~4.5.11]{Federer1969} a Borel measurable set $A\subset\RR^d$ has \emphdef{locally finite perimeter} in $\RR^d$ if and only if $\Hm(\mbdy A\cap K)<\infty$ for all compact $K\subset\RR^d$. Moreover, $A$ has \emphdef{finite perimeter} in $\RR^d$ if and only if $\Hm(\mbdy A)<\infty$.
The theory of sets of finite perimeter allows to partition $\Gamma_\loc$ according to the density of $\Omega$ in the following way.
\begin{lemma}\label{lem:dens-Gloc}
Let $\Omega$ be open. Then up to an $\Hm$-nullset one has $\Gamma_\loc=\Gamma_\loc^0\cup\Gamma_\loc^1\cup\Gamma_\loc^{1/2}$, where $\Gamma_\loc^t := \Gamma_\loc\cap\Omega^t$. Moreover, the part $\Gamma_\loc^{1/2}$ is rectifiable.
\end{lemma}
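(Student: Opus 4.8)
The plan is to reduce the statement to the De Giorgi--Federer structure theory for sets of finite perimeter, localised to pieces of the boundary on which $\Hm$ is finite.

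First I would cover $\Gamma_\loc$ by countably many open balls $(B_i)_{i\in\NN}$ with $\Hm(\Gamma\cap B_i)<\infty$; this is possible by the very definition of $\Gamma_\loc$ together with the Lindel\"of property of $\RR^d$. Fixing $i$, I would pass to the auxiliary set $A_i:=\Omega\cap B_i$. For every $z\in B_i$ the balls $B(z,r)$ with $r$ small enough lie inside $B_i$, so the Lebesgue densities of $A_i$ and of $\Omega$ at $z$ coincide; consequently $A_i^t\cap B_i=\Omega^t\cap B_i$ for every $t$ and $\mbdy A_i\cap B_i=\mbdy\Omega\cap B_i\subset\Gamma\cap B_i$. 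Since moreover every point outside $\clos{B_i}$ lies in $A_i^0$, we get $\mbdy A_i\subset(\Gamma\cap B_i)\cup\partial B_i$ and hence $\Hm(\mbdy A_i)<\infty$. By Federer's criterion~\cite[Theorem~4.5.11]{Federer1969}, $A_i$ is therefore a set of finite perimeter in $\RR^d$.

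Now I would invoke the structure theorem for sets of finite perimeter (see, e.g., \cite[Theorems~3.59 and~3.61]{AFP00} or \cite[Chapter~5]{EG92}): the reduced boundary $\partial^{*}A_i$ is $(d-1)$-rectifiable, every point of $\partial^{*}A_i$ has Lebesgue density $\tfrac12$ with respect to $A_i$, and $\Hm\bigl(\RR^d\setminus(A_i^0\cup A_i^1\cup\partial^{*}A_i)\bigr)=0$. Intersecting with $B_i$ and using the identifications above, this yields $\Hm\bigl(B_i\setminus(\Omega^0\cup\Omega^1\cup\Omega^{1/2})\bigr)=0$ and that $\Hm$-almost every point of $\Omega^{1/2}\cap B_i$ lies in the rectifiable set $\partial^{*}A_i$. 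Taking the union over $i\in\NN$ and recalling that $\Gamma_\loc\subset\bigcup_i B_i$ and $\Gamma_\loc^t=\Gamma_\loc\cap\Omega^t$, we obtain $\Hm\bigl(\Gamma_\loc\setminus(\Gamma_\loc^0\cup\Gamma_\loc^1\cup\Gamma_\loc^{1/2})\bigr)=0$, which is the first claim; and since $\Gamma_\loc^{1/2}$ agrees up to an $\Hm$-nullset with a subset of the countable (hence rectifiable) union $\bigcup_i\partial^{*}A_i$, the set $\Gamma_\loc^{1/2}$ is rectifiable.

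The one point requiring care is the localisation: $\Omega$ need not have locally finite perimeter, so Federer's criterion cannot be applied to $\Omega$ directly, and the whole argument is run instead for the truncations $A_i=\Omega\cap B_i$, which forces the (elementary but essential) remark that the Lebesgue densities of $A_i$ and $\Omega$ agree on the open ball $B_i$ so that the conclusions transfer back to $\Omega$. The remaining steps — the countable ball cover, the patching of countably many $\Hm$-null exceptional sets, and the fact that a Borel set coinciding $\Hm$-a.e.\ with a rectifiable set is itself rectifiable — are routine.
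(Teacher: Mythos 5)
Your proposal is correct and follows essentially the same route as the paper: localise to a piece of the boundary with finite $\Hm$-measure, observe that the truncation of $\Omega$ there has finite perimeter by Federer's criterion, apply the De Giorgi--Federer structure theorem (\cite[Theorems~3.59 and~3.61]{AFP00}), and patch countably many such pieces together. The only cosmetic difference is that you localise with the balls furnished directly by the definition of $\Gamma_\loc$, whereas the paper uses compact subsets of $\Gamma_\loc$ enclosed in smooth bounded open sets at positive distance from $\Gamma_\nloc$; both yield the same finite-perimeter truncations and the density identification on the interior of the localising set.
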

\begin{proof}
Let $K\subset\Gamma_\loc$ be compact.
Let $U\subset\RR^d$ be open, bounded, smooth such that $K\subset U$ and $\dist(\clos{U},\Gamma_\nloc)>0$.
Then $U\cap\Omega$ has finite perimeter as $\partial(U\cap\Omega)\subset\partial U\cup(\clos{U}\cap \Gamma_\loc)$ has finite $\Hm$-measure. Observe that $\overline{D}(U\cap\Omega,z)=\overline{D}(\Omega,z)$ 
and $\underline{D}(U\cap\Omega,z)=\underline{D}(\Omega,z)$ for all $z\in K$.
By~\cite[Theorems~3.59 and~3.61]{AFP00} it follows that $\smash{D(\Omega,z)\in\{0,1,\frac{1}{2}\}}$ for $\Hm$-a.e.\ $z\in K$ and that $K\cap\Omega^{1/2}$ is rectifiable.
The claim follows as $\Gamma_\loc$ is $\sigma$-compact.
\end{proof}
Consequently, up to an $\Hm$-nullset, the purely unrectifiable part $\Gamma_\urect$ is contained in $\Gamma^0_\loc\cup\Gamma^1_\loc$.

The following auxiliary lemma can alternatively be inferred from the results in~\cite[p.\,66--67]{Mor66}.
\begin{lemma}\label{lem:reg-lines}
Let $1\le p<\infty$, $u\in\Wone\cap\Cbar$ and $\nu\in\mathbb{S}^{d-1}$. 
Then for $\Hm$-a.e.\ line $L$ in direction $\nu$
one has the following property: If $a,b\in L$ with $a\ne b$ and such that the open line segment $(a,b)\subset\Omega$, then $u$ is absolutely continuous on the compact line segment $[a,b]$.
Moreover, the classical partial (or directional) derivatives of $u$ agree with the distributional ones a.e.\ in $\Omega$.
\end{lemma}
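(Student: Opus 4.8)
The plan is to reduce the statement to the classical absolute continuity on lines (ACL) characterisation of Sobolev functions, combined with a Fubini-type argument to handle the restriction to line segments contained in $\Omega$. First I would recall that, since $u\in\Wone[p]\subset W^{1,1}_\loc(\Omega)$, after applying a rotation so that $\nu=e_d$ we may invoke the standard ACL property (see e.g.~\cite[Theorem~4.21]{EG92} or~\cite{Mor66}): for $\Hm[d-1]$-a.e.\ $y'\in\RR^{d-1}$ the function $t\mapsto u(y',t)$ is absolutely continuous on every compact subinterval of the open set $\{t : (y',t)\in\Omega\}$, and on those segments its classical derivative agrees a.e.\ with the distributional derivative $\partial_d u$. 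The subtlety is that the ACL property is usually quoted for coordinate rectangles compactly contained in $\Omega$, whereas here we want it on arbitrary maximal segments of a line lying in $\Omega$; but the set $\{t:(y',t)\in\Omega\}$ is open in $\RR$, hence a countable disjoint union of open intervals, and absolute continuity on every compact subinterval of each of these intervals is exactly what the rectangle version yields after exhausting by countably many rectangles. For such a good line $L=\{(y',t):t\in\RR\}$, if $a,b\in L$ with $(a,b)\subset\Omega$, then the whole open segment $(a,b)$ lies in one of those open intervals, so $u$ is absolutely continuous on $(a,b)$; since $u\in\Cbar$, it is in particular continuous on the closed segment $[a,b]$, and continuity at the two endpoints upgrades absolute continuity from $(a,b)$ to $[a,b]$.

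The second assertion — that the classical directional derivatives agree with the distributional ones a.e.\ in $\Omega$ — is then obtained by applying the first part to each of the $d$ coordinate directions (or to a basis of directions) and invoking Fubini: for each $i$, the classical partial derivative $\partial_i u$ exists a.e.\ along a.e.\ line parallel to $e_i$ and coincides there with the weak partial derivative, so by Fubini the two coincide $\meas{\mathord{\cdot}}$-a.e.\ in $\Omega$. One small point to be careful about: "for $\Hm$-a.e.\ line $L$ in direction $\nu$" should be read as $\Hm[d-1]$-a.e.\ in the obvious parametrisation of the lines in direction $\nu$ by points of the orthogonal hyperplane $\nu^\perp$; the exceptional set coming from ACL is precisely an $\Hm[d-1]$-nullset in $\nu^\perp$, and the good set is stable under the countable exhaustion by rectangles and under discarding the further nullset where classical and weak derivatives might disagree.

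I expect the only genuine (though minor) obstacle to be the passage from the "rectangle" formulation of ACL to arbitrary maximal segments inside $\Omega$: one must phrase the exhaustion so that a single $\Hm[d-1]$-nullset of bad $y'$ works simultaneously for all countably many rectangles in a fixed exhausting family of $\Omega$, and then observe that any segment $(a,b)\subset\Omega$ with $a,b$ on a good line is covered by finitely many of these rectangles. Everything else is the textbook ACL theorem together with the elementary fact that a function continuous on $[a,b]$ and absolutely continuous on $(a,b)$ is absolutely continuous on $[a,b]$.
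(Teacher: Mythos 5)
Your overall route --- rotate so that $\nu$ is a coordinate direction, invoke the ACL characterisation of Sobolev functions, and organise the exceptional sets via Fubini --- is the same as the paper's, and your treatment of the second assertion (classical equal to distributional partial derivatives a.e.) is fine. However, the final step of the first part has a genuine gap: the ``elementary fact'' that a function continuous on $[a,b]$ and absolutely continuous on $(a,b)$ is absolutely continuous on $[a,b]$ is false when, as in your argument, ``absolutely continuous on $(a,b)$'' only means absolutely continuous on every compact subinterval of $(a,b)$ --- and that is all the rectangle/exhaustion version of the ACL theorem gives you. The function $t\mapsto t\sin(1/t)$ on $[0,1]$ is continuous on $[0,1]$ and absolutely continuous on every $[\eps,1]$, yet has unbounded variation near $0$ and is not absolutely continuous on $[0,1]$. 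Continuity of $u$ on $\clos{\Omega}$ alone cannot rule out such behaviour at an endpoint $a$ or $b$ lying on $\partial\Omega$; controlling exactly this is the whole point of the lemma.

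What closes the gap, and is the core of the paper's proof, is the integrability of the derivative along the \emph{entire} segment, obtained from Fubini applied on all of $\Omega$ rather than on compactly contained rectangles: since $D_\nu u\in L^p(\Omega)$, for $\Hm$-a.e.\ line $L$ in direction $\nu$ one has $\int_{L\cap\Omega}\abs{D_\nu u}^p\dx[t]<\infty$, hence $D_\nu u$ is integrable on the bounded segment $[a,b]$. On $[a,b)$ the ACL representative $\tilde u$ satisfies $\tilde u(x)=\tilde u(a)+\int_{a}^{x}\partial_\nu\tilde u\dx[t]$ (line integral along $L$), the integral converges as $x\to b$ by absolute continuity of the Lebesgue integral, and $u\in\Cbar$ identifies the limit with $u(b)$ --- after noting that $u=\tilde u$ everywhere on $L\cap\Omega$, which follows from their a.e.\ equality on $L$ plus continuity of both. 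Thus $u$ restricted to $[a,b]$ is the indefinite integral of an $L^1$ function and hence absolutely continuous on the closed segment. You need to add this Fubini/integrability step explicitly; without it the upgrade from $(a,b)$ to $[a,b]$ does not go through.
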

\begin{proof}
By~\cite[Theorem~2.2.2]{Ziemer1989} one may, after rotation, suppose that $\nu=e_1$.
Let $\tilde{u}\colon\Omega\to\RR$ be a representative of $u$ that is locally absolutely continuous on $\Hm$-a.e.\ 
line parallel to the coordinate axes and whose classical partial derivative $\partial_k\tilde{u}$ agrees a.e.\ in $\Omega$ with the
distributional derivative $D_k u$ for all $k\in\{1,\dots,d\}$; see~\cite[Theorem~1.1.3/1]{Maz2011}
or~\cite[Theorem~2.1.4]{Ziemer1989}.
Let $g\colon\Omega\to\RR$ be a representative of $D_1 u$.
So by Fubini and the above, one has for $\Hm$-a.e.\ $x'\in\{e_1\}^\perp$ and $L := x'+\linspan\{e_1\}$ the following
three properties: firstly, $\partial_1\tilde{u}=g$ $\Hm[1]$-a.e.\ on $L\cap\Omega$ and 
\[
    \int_{L\cap\Omega}\abs{\partial_1\tilde{u}(t,x')}^p\dx[t]=\int_{L\cap\Omega}\abs{g(t,x')}^p\dx[t]<\infty;
\]
secondly,
$\tilde{u}$ is locally absolutely continuous in $L\cap\Omega$ and
for all $[a,b)\subset L\cap\Omega$ and $x\in[a,b)$ one has
\[
   \tilde{u}(x) = \tilde{u}(a) + \int_{a_1}^{x_1}\partial_1\tilde{u}(t,x')\dx[t];
\] 
thirdly, one has $u=\tilde{u}$ $\Hm[1]$-a.e.\ on $L\cap\Omega$ and therefore, due to continuity, $u=\tilde{u}$ everywhere on
$L\cap\Omega$.
Let $M$ be the set of the $x'\in\{e_1\}^\perp$ with the above properties.
Fix $x'\in M$ and $L$ as above and let $[a,b)\subset L\cap\Omega$ be bounded.
Then $b\in\clos{\Omega}$ and
\[  
    u(b) = \lim_{x\to b,\,x\in[a,b)} u(x)
	=\tilde{u}(a)+\lim_{x\to b}\int_{a_1}^{x_1}\partial_1\tilde{u}(t,x')\dx[t] =
u(a)+\int_{a_1}^{b_1} g(t,x')\dx[t].
\]
Hence $u$ is absolutely continuous on $[a,b]$.
\end{proof}
\begin{remark}
It is well-known that an element $u\in\Wone$ has a representative that is locally absolutely continuous on almost every
line parallel to the coordinate axes.
The central point of Lemma~\ref{lem:reg-lines} is to ensure that if $u\in\Wone\cap\Cbar$ then
the absolute continuity (along almost every line) extends up to the first point where the boundary is hit \emph{and} that
$u$ has the appropriate value at such a boundary point.
Of course, in general not every boundary point can be approached from $\Omega$ on a line segment, see Figures~\ref{fig:forest}
or~\ref{fig:wiggle}.

\begin{figure}
\centering
\includegraphics{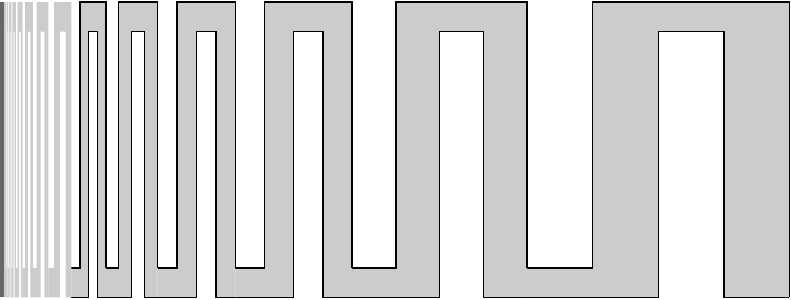}
\caption{A connected domain where the points in the dark grey line segment on the left are `inaccessible' from the inside with respect to the geodesic metric.}
\label{fig:wiggle}
\end{figure}

\end{remark}

The following proposition shows that if $\Omega$ has strictly positive upper density at a rectifiable part of the boundary then the approximative trace is unique there.
We shall see in Theorem~\ref{thm:sing-dens-0} that the rectifiability condition is superfluous.
\begin{proposition}\label{prop:trace-unique-rect}
Let $\Omega\subset\RR^d$ be open and $1\le p<\infty$.
Suppose that $A\subset\Gamma_\rect$ is Borel
and $\Hm$-a.e.\ $z\in A$ satisfies $\overline{D}(\Omega,z)>0$.
Then the approximative trace in $\Wone$ is unique on $A$.
\end{proposition}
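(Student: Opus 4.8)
The plan is to reduce, via Lemma~\ref{lem:nonuni-witness}, to a contradiction argument: assuming the approximative trace is nonunique on $A$, we extract a compact set $K\subset A$ with $\Hm(K)>0$ and a sequence $(u_n)$ in $\Wone\cap\Cbar$ with $u_n\to0$ in $\Wone$, $0\le u_n\le1$, $\supp u_n\Subset U$ for a bounded smooth open $U\supset K$ with $\dist(U,\Gamma_\nloc)>0$, and $u_n=1$ on $K$. Shrinking $K$ if necessary, we may also assume that every $z\in K$ enjoys the rectifiability properties of Lemma~\ref{lem:reg-rect} (in the cube-adapted form of the subsequent remark) and that $\overline D(\Omega,z)>0$; by Lemma~\ref{lem:dens-Gloc}, after a further reduction we may assume $D(\Omega,z)$ equals $\tfrac12$ or $1$ on $K$, so in particular $\overline D(\Omega,z)\ge\tfrac12$ on $K$. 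We will derive a contradiction by showing that the constraint $u_n=1$ on such a ``fat'' rectifiable piece $K$, together with $u_n$ being small in $\Wone$, is impossible.

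The heart of the argument is a one-dimensional estimate via Lemma~\ref{lem:reg-lines}. Fix $z\in K$ with tangent plane $V$ and normal $\nu$. Near $z$, by Lemma~\ref{lem:reg-rect}\ref{en:rect-lipgraph}, $K$ contains the graph of an $\eps$-Lipschitz function $f$ over a subset $D\subset V$ of density close to $1$ at $z$, and since $\overline D(\Omega,z)>0$ there are arbitrarily small cubes $Q$ centred at $z$ in which $\Omega$ occupies a definite fraction of the volume. The key geometric point: for most lines $L$ in direction $\nu$ through $Q$, the line $L$ meets $K$ (near the graph of $f$) at a point $p_L$, and a short segment of $L$ on the $\Omega$-side of $p_L$ lies in $\Omega$ — here is where one uses that $\Omega$ has substantial density in $Q$ so that the ``columns'' in direction $\nu$ over a large portion of $D$ genuinely enter $\Omega$. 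On such a line, Lemma~\ref{lem:reg-lines} gives that $u_n$ is absolutely continuous up to $p_L$ with $u_n(p_L)=1$, hence
\[
    1 = u_n(p_L) = u_n(x) + \int \partial_\nu u_n\,,
\]
for $x$ in the interior segment, so either $u_n(x)$ is bounded below away from $0$ on a set of positive measure in $\Omega\cap Q$, or the directional derivative $\partial_\nu u_n$ has large $L^p$ norm over that set. Integrating over the family of such lines (Fubini in the $(\nu,V)$ splitting) and using the lower bound on $|\Omega\cap Q|$ produces a lower bound of the form $\norm{u_n}_{\Wone}^p\gtrsim c(z)>0$ with $c(z)$ independent of $n$, once $Q$ is fixed small enough. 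This contradicts $u_n\to0$ in $\Wone$, completing the proof.

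I expect the main obstacle to be making the ``columns enter $\Omega$'' step quantitative and honest: a positive \emph{upper} density of $\Omega$ at $z$ only gives good volume fraction along a subsequence of scales, and one must combine this with the near-full-density of the Lipschitz graph $D\subset V$ and the $\eps$-flatness of $K$ to guarantee that a positive-$\Hm^{d-1}$-fraction of vertical lines over $D$ both hit $K$ near $f$ and have an initial segment into $\Omega$ of length comparable to the cube size. One clean way is to choose the scale $r$ so that $\meas{\Omega\cap Q(z,r)}\ge\tfrac14\meas{Q(z,r)}$, discard the $\Hm^{d-1}$-small set of lines missing $D$ or the bad lines from Lemma~\ref{lem:reg-lines}, and on the remainder apply Fubini: the average over $V$ of the length of $\Omega$ inside each column is $\ge\tfrac14 r$, so a definite fraction of columns carry an $\Omega$-segment of length $\ge cr$ adjacent to their (unique, by flatness) intersection with $\graph f\subset K$; on each such segment the absolute-continuity identity forces $\int_0^{cr}|\partial_\nu u_n|+\inf|u_n|\ge 1$, and Jensen/Hölder then yields the uniform lower bound on $\norm{u_n}_{\Wone}$. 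The rectifiability hypothesis is precisely what lets us reduce $K$ to a single Lipschitz graph so that the vertical columns have a well-defined first hitting point on $K$; Theorem~\ref{thm:sing-dens-0} will later remove it by absorbing the purely unrectifiable part into an $\Hm$-nullset via the density bound $\Theta(\Gamma_\urect,z)=0$.
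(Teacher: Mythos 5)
Your plan follows essentially the same route as the paper: reduce via Lemma~\ref{lem:nonuni-witness} to a compact $K\subset A$ with $\Hm(K)>0$ and $u_n=1$ on $K$, $u_n\to0$ in $\Wone$; localise at a point $z$ where $K$ is covered by a single $\eps$-Lipschitz graph over the approximate tangent plane (Lemma~\ref{lem:reg-rect}); use the positive density of $\Omega$ in a small cube to find a set of columns in the normal direction of positive $\Hm$-measure that enter $\Omega$; and then apply Lemma~\ref{lem:reg-lines}, the fundamental theorem of calculus along each column, Fubini and H\"older to contradict $\norm{u_n}_{1,p}\to0$. Your preliminary reduction via Lemma~\ref{lem:dens-Gloc} to $D(\Omega,z)\in\{\tfrac12,1\}$ is legitimate and neatly sidesteps the only-along-a-subsequence-of-scales issue with the upper density that the paper instead handles by fixing $r$ along the limsup.

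There is, however, one step that would fail as you have written it: the claim that the $\Omega$-segment in a column is \emph{adjacent} to the column's intersection with $\graph f$. Flatness of $K$ gives uniqueness of the intersection with $\graph f$, but it does not prevent the column from containing \emph{other} points of $\Gamma$ (points of $\Gamma\cap C_r$ not on $\graph f$) strictly between the $\Omega$-segment and the graph point. In that case the open segment from a point of $\Omega\cap C_r$ up to $p_L\in\graph f$ is not contained in $\Omega$, Lemma~\ref{lem:reg-lines} does not apply on it, and the identity $1=u_n(x)+\int\partial_\nu u_n$ with upper endpoint $p_L$ is unjustified --- the first boundary point actually hit need not lie in $K$, and there $u_n$ can be anything in $[0,1]$. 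The paper repairs exactly this by setting $E:=\Gamma\cap C_r\setminus\graph f$, discarding the columns over $P_V E$, and checking that this removes only an $\eps$-fraction of the columns; the smallness of $\Hm(P_V E)\le\Hm(E)$ follows from $\Theta(\Gamma\setminus K,z)=0$ (Mattila's density theorem, using $K\subset\Gamma_\loc$) together with $\Theta(K\setminus\graph f,z)=0$ from Lemma~\ref{lem:reg-rect}\,\ref{en:rect-lipgraph}. Your list of discarded columns (those missing $D$ and the null set of bad lines from Lemma~\ref{lem:reg-lines}) needs to be augmented by this third family, and the point $z$ needs to be chosen in advance so that $\Theta(\Gamma\setminus K,z)=0$. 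With that addition the argument closes exactly as you describe.
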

\begin{proof}
Assume for contradiction that the approximative trace is nonunique on $A$.
So by Lemma~\ref{lem:nonuni-witness} there exist a compact set $K\subset A$ with $\Hm(K)>0$ and
a sequence $(u_n)$ in $\Wone\cap\Cbar$ such that $0\le u_n\le 1$, $u_n=1$ on $K$ and $u_n\to 0$ in $\Wone$.
Clearly $K$ is rectifiable since $K\subset\Gamma_\rect$.

Let $z_0\in K$ be a fixed boundary point with the properties specified in Lemma~\ref{lem:reg-rect}.
Additionally we may suppose that $\overline{D}(\Omega,z_0)>0$ and
$\Theta(\Gamma\setminus K,z_0)=0$; the latter is possible due to~\cite[Theorem~6.2]{Mat95} and since
$K\subset\Gamma_\loc$.

We use local orthogonal coordinates at $z_0$ such that the vector $e_d$ is a normal vector for $V$ and $z_0$ corresponds to $0$.
Denote by $C_r$ the open cube $(-r,r)^d$ in these local orthogonal coordinates.
Whenever we write $(x',t)$ for an element of $C_r$ in the following,
we consider this as coordinates in $V\times (-r,r)$.
Let $L_r := \{(x',t)\in C_r : t\in (-r,0)\}$ be the lower half of $C_r$ in the $e_d$ direction.
Choosing the sign of $e_d$ appropriately, we may assume that $\overline{D}(\Omega\cap L_r,z_0)>0$.

By choosing $\eps\in(0,\frac{1}{3})$ small enough, we may suppose that there exists a $\delta>0$ such that
\[
    \limsup_{r\to 0+}\frac{\meas{\Omega\cap L_r\setminus N_{\eps r}(V)}}{(2r)^d} > \delta.
\]
We may further decrease $\eps$ to ensure $0<\eps\le\frac{\delta}{3}$.
Fix such an $\eps$ and let $D$ and $f$ be as in Lemma~\ref{lem:reg-rect}\,\ref{en:rect-lipgraph}.
The assumptions regarding $z_0$ imply that there exists an $r_0>0$ such that
\begin{equation}\label{eq:dens-est}
    (1-\eps)(2r)^{d-1} \le \Hm(D\cap C_r)\le \Hm(\Gamma\cap C_r) \le (1+\eps)(2r)^{d-1}
\end{equation}
and
\[
    \Hm(\Gamma\cap C_r\setminus N_{\eps r}(V)) \le \eps(2r)^{d-1}
\]
for all $0<r\le r_0$.

Fix an $r\in (0,r_0]$ such that 
\[
    \meas{\Omega\cap L_r\setminus N_{\eps r}(V)} \ge \delta(2r)^d.
\]
In particular, from now on the cube $C_r$ is fixed.
Let $E := \Gamma\cap C_r\setminus \graph f$ be the `bad' boundary points in $C_r$.
We remove them from $C_r$ by setting $\tilde{D} := (D\setminus P_V E)\cap C_r$,
$\tilde{C} := \{(x',t)\in C_r : x'\in\tilde{D}\}$ and $\tilde{L} := \tilde{C}\cap L_r$, where $P_V$ denotes the orthogonal projection onto $V$.
Then by~\eqref{eq:dens-est} one has $\Hm(\tilde{D})\ge (1-3\eps)(2r)^{d-1}$ and $\meas{\tilde{L}}\ge \frac{1}{2}(1-3\eps)(2r)^d$.
Therefore
\begin{equation}\label{eq:lower-Om-est}
    \meas{\Omega\cap\tilde{L}\setminus N_{\eps r}(V)}\ge \delta(2r)^d-\frac{3}{2}\eps(2r)^d \ge\frac{\delta}{2}(2r)^d.
\end{equation}
Now let 
\[
    I := \{x'\in\tilde{D} : \text{there exists a $t$ such that $(x',t)\in\Omega\cap\tilde{L}\setminus N_{\eps r}(V)$}\}.
\]
Observe that for all $x'\in I$ the whole open line segment from $(x',-r)$ to $(x',f(x'))$ is contained in $\Omega$.
In particular, for all $x'\in I$  one has that $f(x')$ is the unique $t\in(-r,r)$ such that $(x',t)\in \Gamma$.
The measure of $\Omega$ in $\tilde{L}\setminus N_{\eps r}(V)$ can be bounded by
\[
    \meas{\Omega\cap\tilde{L}\setminus N_{\eps r}(V)} \le\Hm(I)r.
\]
Hence by~\eqref{eq:lower-Om-est} it follows that
\[
    \Hm(I) \ge \delta(2r)^{d-1}>0.
\]

Let $\eta\in\Cinfc[\{(x',t):x'\in V, t\in(-r,\infty)\}]$ be such that $0\le\eta\le 1$ and $\eta=1$ on $N_{\eps r}(V)\cap C_r$.
In particular, $\eta u_n=1$ on $\graph f\cap C_r$ and $\eta u_n=0$ on $I-re_d$.
As $\eta u_n\in\Wone\cap\Cbar[\Omega]$, 
it follows from Lemma~\ref{lem:reg-lines} that $(\eta u_n)(x',\cdot)$ is absolutely continuous on $[-r,f(x')]$ for $\Hm$-a.e.\ $x'\in I$.
Thus by the fundamental theorem of calculus, Fubini's theorem, Lemma~\ref{lem:reg-lines} and H\"older's inequality one obtains
\[
    0<\Hm(I)=\int_I\int_{-r}^{f(x')} \partial_d(\eta u_n)(x',t) \dx[t]\dx[x'] \le C\norm{u_n}_{1,p;C_r\cap\Omega}\le C\norm{u_n}_{1,p;\Omega}\to 0
\]
as $n\to\infty$, which is a contradiction.
\end{proof}
\begin{remark}
In the proof of Proposition~\ref{prop:trace-unique-rect} we 
control the behaviour locally on the boundary by the derivative in the interior.
This is reminiscent of the Gauss--Green divergence theorem.
In fact, there is a suitable version of the Gauss--Green theorem that can be applied in our setting,
see~\cite[Theorem~6.4]{Fed45} and~\cite[Remark~4.8]{Fed46}.
The resulting proof, however, would be less self-contained and still require analogous measure geometric considerations. 
\end{remark}

While the following proposition is a direct consequence of important classical results about the Lebesgue area and continuous functions of bounded variation, it appears that its statement is not very well-known.
It uses the main result of~\cite{Fed1960:np-surf}, which is a higher-dimensional version of the fundamental fact that a curve is rectifiable if and only if its one-dimensional Hausdorff measure is finite.
\begin{proposition}\label{prop:cont-graph-nice}
Let $I\subset\RR^{d-1}$ be a nontrivial bounded open rectangle and $g\colon \clos{I}\to\RR$ continuous.
Define $E := \{ (x,t)\in I\times\RR : t<g(x)  \}\subset\RR^d$.
Suppose that $\Hm(\graph g)<\infty$. Then $\Hm(\graph g\setminus\mbdy E)=0$.
In particular, $\graph g$ is rectifiable and $D(E,z)=\frac{1}{2}$ for $\Hm$-a.e.\ $z\in\graph g$.
\end{proposition}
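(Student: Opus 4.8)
The statement decomposes into three assertions: (i) $\Hm(\graph g\setminus\mbdy E)=0$; (ii) $\graph g$ is rectifiable; (iii) $D(E,z)=\tfrac12$ for $\Hm$-a.e.\ $z\in\graph g$. The key input is that $\Hm(\graph g)<\infty$, which by the cited theorem of Federer on non-parametric surfaces~\cite{Fed1960:np-surf} forces the continuous function $g$ to have bounded variation on $\clos I$ in the sense of Tonelli--Cesari (the Lebesgue area of $\graph g$ equals $\Hm(\graph g)$ and finiteness of the Lebesgue area of a continuous non-parametric surface is equivalent to $g\in\BV$, essentially the fundamental ``curve rectifiable iff $\Hm[1]<\infty$'' fact lifted to dimension $d-1$). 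So the first step is: invoke~\cite{Fed1960:np-surf} to conclude $g\in\BV(I)$.

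**Reduction to a set of finite perimeter.** Once $g\in\BV(I)$, the subgraph $E=\{(x,t)\in I\times\RR: t<g(x)\}$ is a set of locally finite perimeter in the open cylinder $I\times\RR$; this is the classical characterisation of $\BV$ functions via their subgraphs (see~\cite[Theorem~3.59]{AFP00} or the Giusti-type subgraph theorem). The reduced boundary $\partial^* E$ inside the cylinder then satisfies, by Federer's structure theorem and the criterion recalled just before Lemma~\ref{lem:dens-Gloc}, $\Hm(\mbdy E\cap(I\times\RR))<\infty$, $\Hm(\mbdy E\setminus\partial^*E)=0$, and $D(E,z)=\tfrac12$ at $\Hm$-a.e.\ $z\in\mbdy E$; moreover $\mbdy E$ is rectifiable. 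So assertion (iii) will follow once we know $\Hm(\graph g\symdiff\mbdy E)=0$ (in fact $\Hm(\graph g\setminus\mbdy E)=0$ suffices, since $\mbdy E\cap(I\times\RR)\subset\graph g$ trivially by continuity of $g$: below the graph $E$ has density $1$, above it density $0$). Likewise (ii) follows from (i) plus rectifiability of $\mbdy E$.

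**The heart: $\Hm(\graph g\setminus\mbdy E)=0$.** By continuity of $g$, every point $(x,g(x))$ with $x\in I$ lies in $\partial E$, and points strictly below (resp.\ above) the graph are interior (resp.\ exterior) points, so $\mbdy E\cap(I\times\RR)=\graph g$ as sets if we only used topological density $0$ and $1$; the subtlety is that a point of $\graph g$ might fail to have \emph{Lebesgue} density in $(0,1)$ and instead have density $0$ or $1$ (this happens over points $x$ where $g$ oscillates wildly). The claim is that the projection onto $I$ of this bad set $\graph g\setminus\mbdy E$ is Lebesgue-null in $\RR^{d-1}$, and that this is enough. I would argue as follows: the vertical slice of $E$ over $x$ is exactly $(-\infty,g(x))$, so for $\Hm[d-1]$-a.e.\ $x$ the one-dimensional slice $E_x$ has the point $g(x)$ as a point where the one-dimensional density of $E_x$ is $\tfrac12$; combining the one-dimensional slicing of densities of sets of finite perimeter (Vol'pert's theorem, see~\cite[Theorem~3.108]{AFP00}) with the fact that $(\mbdy E)_x = \{g(x)\}$ for a.e.\ $x$, one gets that for $\Hm[d-1]$-a.e.\ $x$ the point $(x,g(x))$ belongs to $\mbdy E$. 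Then $\graph g\setminus\mbdy E$ projects into an $\Hm[d-1]$-null subset $N$ of $I$; since over $N$ the portion of $\graph g$ is $\{(x,g(x)):x\in N\}$, and $\graph g$ is (once we know rectifiability, or directly from $g\in\BV\subset$ having an a.e.\ approximate differential) a countably $(d-1)$-rectifiable set whose $\Hm$-measure is absolutely continuous with respect to the $\Hm[d-1]$-measure of its projection via the area formula with Jacobian $\sqrt{1+|\nabla g|^2}\in L^1$, we conclude $\Hm(\graph g\setminus\mbdy E)\le\int_N\sqrt{1+|\nabla g|^2}\,dx=0$.

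**Main obstacle.** The delicate point is precisely controlling the ``bad'' part of the graph lying over a Lebesgue-null projection: one must ensure $\Hm(\{(x,g(x)):x\in N\})=0$ whenever $|N|=0$, which requires the area formula and hence that $\graph g$ is rectifiable with integrable generalised gradient — but rectifiability is itself part of what we are proving. I would sidestep the circularity by deriving rectifiability of $\graph g$ directly from $g\in\BV(I)$ first (a $\BV$ function is approximately differentiable $\Hm[d-1]$-a.e., so its graph over the points of approximate differentiability is rectifiable, and the remaining part of $I$ is $\Hm[d-1]$-null, contributing—again by an area/coarea estimate for the Lipschitz pieces—an $\Hm$-null piece of the graph), and only then run the density argument. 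Everything else is an application of the standard structure theory for sets of finite perimeter recalled earlier in the paper.
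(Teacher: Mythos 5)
There is a genuine gap at exactly the point you flag as the ``main obstacle'', and your proposed fix does not close it. The slicing step is fine: Vol$'$pert's theorem gives $(x,g(x))\in\partial^*E$ for $\Hm[d-1]$-a.e.\ $x\in I$, so $\graph g\setminus\mbdy E$ projects into a Lebesgue-null set $N\subset I$. But the final step --- bounding $\Hm\bigl(\{(x,g(x)):x\in N\}\bigr)$ by $\int_N\sqrt{1+|\nabla g|^2}\,dx=0$ via the area formula --- is false for general continuous $\BV$ functions. The approximate gradient only records the absolutely continuous part of $Dg$; the Cantor part of $Dg$ lives precisely over a Lebesgue-null subset of $I$ and can carry positive graph measure. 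Concretely, for $d=2$ and $g$ the Cantor--Vitali function on $I=(0,1)$, the graph over the Cantor set $N$ (with $|N|=0$) maps onto $\{0\}\times[0,1]$ under the vertical projection, so $\Hm[1](\graph g\cap(N\times\RR))\ge 1$. The same example already breaks your earlier parenthetical claim that the non-differentiability set contributes ``an $\Hm$-null piece of the graph''. So your argument controls the projection of the bad set but cannot convert that into $\Hm$-nullity on the graph, which is the whole content of the proposition.

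The paper avoids this pointwise/slicing route entirely and instead runs a global mass balance: by Federer's nonparametric-surface theorem, $\Hm(\graph g)$ equals the Lebesgue area $\alpha(g)$ and $\graph g$ is rectifiable; by Krickeberg, $g\in\BV(I)$ and $\alpha(g)=V(\mu,I)$ for $\mu=(\Hm[d-1]\niv I,D_1g,\dots,D_{d-1}g)$; by Miranda, $V(\mu,I)=P(E,I\times\RR)$; and $P(E,I\times\RR)=\Hm(\graph g\cap\mbdy E)$ by~\cite[(3.62)]{AFP00}. Since $\mbdy E\cap(I\times\RR)\subset\graph g$ and $\Hm(\graph g)=\Hm(\graph g\cap\mbdy E)<\infty$, the difference is null --- an identity of total masses that automatically accounts for the Cantor part your local argument misses. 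If you want to repair your write-up, you should replace the area-formula step by this chain of equalities (or some equivalent global identity between $\Hm(\graph g)$ and the perimeter of the subgraph).
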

\begin{proof}
It follows from~\cite{Fed1960:np-surf} that $\graph g$ is rectifiable and that $\alpha(g)=\Hm(\graph g)$,
where $\alpha(g)$ denotes the Lebesgue area of $g$.
By~\cite[Theorem~9.1]{Kri57} one has $g\in\BV(I)$ and $\alpha(g)=V(\mu,I)$, where 
$\mu$ is the $\RR^d$-valued vector measure $(\Hm[d-1]\niv I,D_1g,\dots, D_{d-1}g)$
and $V(\mu,I)$ denotes the total variation of $\mu$ on $I$.
Since $V(\mu,I)=P(E, I\times\RR)$ by~\cite[Theorem~1.10]{Mir64} and $P(E, I\times\RR)=\Hm(\graph g\cap\mbdy E)$ by~\cite[(3.62)]{AFP00}, the claim follows.
The statement about the density follows from~\cite[Theorem~3.61]{AFP00}.
\end{proof}

We now show that in the case of continuous boundary $\Hm$-a.e.\ point in 
$\Gamma_\loc$ belongs to the measure theoretic boundary of $\Omega$, which implies both rectifiability and the density condition in Proposition~\ref{prop:trace-unique-rect} on $\Gamma_\loc$.
Thanks to the beautiful results employed in the proof of Proposition~\ref{prop:cont-graph-nice}, we can now avoid technical arguments.
\begin{corollary}\label{cor:cont-mbdy}
Suppose $\Omega\subset\RR^d$ open has continuous boundary. Then $\Hm(\Gamma_\loc\setminus \mbdy\Omega)=0$.
In particular, $\overline{D}(\Omega,z)>0$ for $\Hm$-a.e.\ $z\in\Gamma_\loc$.
\end{corollary}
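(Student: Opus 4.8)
The plan is to reduce the statement to a local computation via the definition of ``continuous boundary'' and then invoke Proposition~\ref{prop:cont-graph-nice}. Recall that $\Omega$ having continuous boundary means that every $z\in\Gamma$ has a neighbourhood in which, after a rigid motion, $\Omega$ coincides with the subgraph $\{(x,t): t<g(x)\}$ of a continuous function $g$ on a bounded open rectangle $I\subset\RR^{d-1}$, with $\Gamma$ corresponding to $\graph g$. Since $\Gamma_\loc$ is $\sigma$-compact, it suffices to show $\Hm(K\setminus\mbdy\Omega)=0$ for an arbitrary compact $K\subset\Gamma_\loc$; and by compactness $K$ is covered by finitely many such boundary charts, so it is enough to work in a single chart.

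First I would fix one boundary chart, i.e.\ a rigid motion identifying a neighbourhood $U$ of a boundary point with (an open subset of) $I\times\RR$ so that $\Omega\cap U$ corresponds to $E\cap U$ with $E=\{(x,t)\in I\times\RR : t<g(x)\}$ and $\Gamma\cap U$ to $\graph g\cap U$. The crucial point is that $\graph g\cap U$ has finite $\Hm$-measure: this holds because the chart point lies in $\Gamma_\loc$, so after possibly shrinking the chart we may assume $\Hm(\Gamma\cap U)<\infty$, hence $\Hm(\graph g\cap U)<\infty$. Shrinking $I$ slightly to a smaller rectangle $I'\Subset I$ with $\clos{I'}\subset I$, the restriction $g|_{\clos{I'}}$ is continuous on the closed rectangle and $\Hm(\graph(g|_{I'}))<\infty$, so Proposition~\ref{prop:cont-graph-nice} applies and yields $\Hm(\graph(g|_{I'})\setminus\mbdy E)=0$. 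Since densities are a local notion, $\mbdy E$ and $\mbdy\Omega$ agree within the chart, so $\Hm$-a.e.\ point of $\Gamma\cap U'$ (the part of the boundary lying over $I'$) belongs to $\mbdy\Omega$. Covering $K$ by finitely many such shrunk charts and summing gives $\Hm(K\setminus\mbdy\Omega)=0$, and then letting $K$ exhaust $\Gamma_\loc$ gives $\Hm(\Gamma_\loc\setminus\mbdy\Omega)=0$.

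For the ``in particular'' clause: if $z\in\mbdy\Omega$ then by definition $z\notin\Omega^0$, so $\overline D(\Omega,z)>0$ (indeed $\underline D(\Omega,z)>0$ would require more, but positivity of the upper density is immediate from $D(\Omega,z)\ne 0$ in the sense that $z\notin\Omega^0$ means the density either fails to exist or is positive, and in either case the $\limsup$ is $>0$). Hence the first assertion immediately implies $\overline D(\Omega,z)>0$ for $\Hm$-a.e.\ $z\in\Gamma_\loc$, which is exactly the hypothesis needed to feed into Proposition~\ref{prop:trace-unique-rect} (together with rectifiability, which also follows since $\mbdy\Omega\cap\Gamma_\loc$ is rectifiable by Proposition~\ref{prop:cont-graph-nice} applied chart by chart, or by Lemma~\ref{lem:dens-Gloc}).

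The main obstacle is bookkeeping with the boundary charts: one must ensure that the finitely many charts covering a given compact $K\subset\Gamma_\loc$ can each be arranged to have finite boundary measure (which is where membership in $\Gamma_\loc$ is used) and that Proposition~\ref{prop:cont-graph-nice} is applied to genuine \emph{closed bounded rectangles}, forcing the passage to slightly smaller rectangles $I'\Subset I$ and a corresponding small shrinking of the charts — one then needs that the shrunk charts still cover $K$, which is a routine compactness argument. No deep new input is required beyond Proposition~\ref{prop:cont-graph-nice}; the work is entirely in localising correctly.
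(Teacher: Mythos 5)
Your proposal is correct and follows essentially the same route as the paper: localise over $\Gamma_\loc$ using $\sigma$-compactness, represent $\Omega$ in each chart as the subgraph of a continuous function with graph of finite $\Hm$-measure, and apply Proposition~\ref{prop:cont-graph-nice} to conclude that $\Hm$-a.e.\ graph point lies in $\mbdy E=\mbdy\Omega$ locally. The extra bookkeeping you supply (shrinking to $I'\Subset I$ so the proposition applies on a closed rectangle, and using membership in $\Gamma_\loc$ to guarantee finiteness of the boundary measure) is exactly what the paper's shorter proof implicitly assumes.
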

\begin{proof}
As $\Gamma_\loc$ is relatively open in $\Gamma$ and $\sigma$-compact, and membership of a boundary point in $\mbdy\Omega$ is a local
measure geometric property, it suffices to consider the question locally,
where $\Omega$ can be written as the subgraph of a continuous function.
We may assume that in local orthogonal coordinates there exists a rectangle 
$R:=I\times(-H,H)$, where $I\subset\RR^{d-1}$ is an open cube and $H>0$,
and a continuous function $g\colon I\to(-\frac{H}{2},\frac{H}{2})$ that has a continuous extension to $\clos{I}$ such that $\Hm(\graph g)<\infty$, $R\cap\Omega=\{(x',t)\in R: t<g(x')\}$ and
$R\cap\Gamma=\{(x',g(x')):x'\in I\}=\graph g$. 
Let $E:=\{(x',t)\in I\times\RR :t<g(x')\}$. One has $D(\Omega,z)=D(R\cap\Omega,z)=D(E,z)$ for all $z\in\graph g$.
By Proposition~\ref{prop:cont-graph-nice} one has $D(\Omega,z)=\frac{1}{2}$ for $\Hm$-a.e.\ $z\in\graph g$.
\end{proof}

Combining this with Proposition~\ref{prop:trace-unique-rect}, we obtain the following remarkable result.
\begin{theorem}\label{thm:cont-unique}
Suppose $\Omega\subset\RR^d$ open has continuous boundary and $1\le p\le\infty$. Then the approximative trace in $\Wone$ is unique.
\end{theorem}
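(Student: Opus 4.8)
The statement follows by assembling results already in place. The case $p=\infty$ is handled by Remark~\ref{rem:unique-pinf}, so fix $1\le p<\infty$. By Corollary~\ref{cor:Gamma_s} (or Theorem~\ref{thm:char-Xs-cap}) the approximative trace in $\Wone$ is unique precisely when $\Hm(\Gamma^p_\sing)=0$, and by the localisation language introduced after Corollary~\ref{cor:Gamma_s} it suffices to prove that the approximative trace is unique on $\Gamma_\loc$. The key point is that the hypotheses of Proposition~\ref{prop:trace-unique-rect} are satisfied with $A=\Gamma_\loc$: Corollary~\ref{cor:cont-mbdy} gives $\Hm(\Gamma_\loc\setminus\mbdy\Omega)=0$, hence $\overline{D}(\Omega,z)>0$ for $\Hm$-a.e.\ $z\in\Gamma_\loc$, which is the density hypothesis.

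\textbf{Main steps.} First I would reduce to $1\le p<\infty$ via Remark~\ref{rem:unique-pinf}. Next I would recall that, by Corollary~\ref{cor:cont-mbdy}, $\Hm$-almost every point of $\Gamma_\loc$ lies in $\mbdy\Omega$; since $D(\Omega,z)\in\{0,1,\frac12\}$ for $\Hm$-a.e.\ $z\in\Gamma_\loc$ by Lemma~\ref{lem:dens-Gloc}, and $\mbdy\Omega=\RR^d\setminus(\Omega^0\cup\Omega^1)$, this forces $D(\Omega,z)=\frac12$ (in particular $\overline D(\Omega,z)>0$) for $\Hm$-a.e.\ $z\in\Gamma_\loc$. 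Then I would invoke Lemma~\ref{lem:dens-Gloc} once more, which says that $\Gamma^{1/2}_\loc$ — hence, up to an $\Hm$-nullset, all of $\Gamma_\loc$ — is rectifiable, so that $\Hm(\Gamma_\loc\setminus\Gamma_\rect)=0$. Now apply Proposition~\ref{prop:trace-unique-rect} with $A$ an $\Hm$-full Borel subset of $\Gamma_\rect\cap\Gamma_\loc$ on which the density condition holds: it yields $\Hm(A\cap\Gamma^p_\sing)=0$, and since $\Gamma^p_\sing\subset\Gamma_\loc$ and $\Hm(\Gamma_\loc\setminus A)=0$, we conclude $\Hm(\Gamma^p_\sing)=0$. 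Therefore $Z^p$ is trivial and the approximative trace is unique.

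\textbf{Obstacle.} There is essentially no obstacle left at this stage — all the substantive work (the measure geometric criterion of Proposition~\ref{prop:trace-unique-rect}, and the subgraph/Lebesgue-area argument of Proposition~\ref{prop:cont-graph-nice} feeding Corollary~\ref{cor:cont-mbdy}) has already been done earlier in the section. The only thing to be careful about is the bookkeeping with $\Hm$-nullsets: one must package ``$\Gamma_\loc$ is rectifiable up to a nullset'' and ``$\overline D(\Omega,\cdot)>0$ a.e.\ on $\Gamma_\loc$'' into a single Borel set $A\subset\Gamma_\rect$ of full $\Hm$-measure in $\Gamma_\loc$ before quoting Proposition~\ref{prop:trace-unique-rect}, and then transfer the conclusion $\Hm(A\cap\Gamma^p_\sing)=0$ back to $\Hm(\Gamma^p_\sing)=0$ using $\Gamma^p_\sing\subset\Gamma_\loc$.
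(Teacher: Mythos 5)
Your proposal is correct and follows essentially the same route as the paper's proof: Corollary~\ref{cor:cont-mbdy} gives $\Hm(\Gamma_\loc\setminus\mbdy\Omega)=0$, hence $\overline{D}(\Omega,z)>0$ $\Hm$-a.e.\ on $\Gamma_\loc$ together with rectifiability of $\Gamma_\loc$ up to a nullset, and Proposition~\ref{prop:trace-unique-rect} then applies with $A=\Gamma_\loc$. The only cosmetic difference is that you obtain rectifiability via Lemma~\ref{lem:dens-Gloc} while the paper cites the underlying results of Ambrosio--Fusco--Pallara directly; your extra care with the nullset bookkeeping and the $p=\infty$ case is fine but not a substantive departure.
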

\begin{proof}
By Corollary~\ref{cor:cont-mbdy} one has $\Hm(\Gamma_\loc\setminus\mbdy\Omega)=0$ and therefore $\overline{D}(\Omega,z)>0$ for $\Hm$-a.e.\ $z\in\Gamma_\loc$. Moreover, it follows from~\cite[Theorems~3.59 and~3.61]{AFP00} that $\Gamma_\loc$ is rectifiable.
So the assumptions of Proposition~\ref{prop:trace-unique-rect} are satisfied for $A=\Gamma_\loc$, which proves the claim.
\end{proof}
\begin{remark}
Theorem~\ref{thm:cont-unique} answers the corresponding 
open question in~\cite[Section~9]{AtE2011:DtN}.

This yields yet another remarkable property of domains with continuous boundary.
Other remarkable properties are the density property in Proposition~\ref{prop:cont-smooth-dense}, the related equality $\Wonez=\Wbarz$ resulting in the stability of the Dirichlet problem, see the proof of~\cite[Theorem~V.4.7]{EE87}, \cite{AD08} and~\cite{Hed2000:stab}, or the compact embedding of $\Wone$ into $\Lp$ if $\Omega$ is in addition bounded, see~\cite[Theorem~V.4.17]{EE87}.
The property that an open set has continuous boundary can be characterised via the segment property~\cite[Theorem~V.4.4]{EE87}.
\end{remark}

We now show that on parts of $\Gamma$ where $\Omega$ has density $1$ the approximative trace is always unique.
It is important that in contrast to Proposition~\ref{prop:trace-unique-rect} no rectifiability is assumed here. 
\begin{proposition}\label{prop:dens1-uniq}
Let $\Omega\subset\RR^d$ be open and $A\subset\Gamma_\loc$ Borel.
Suppose that $D(\Omega,z)=1$ for $\Hm$-a.e.\ $z\in A$. 
Then the approximative trace is unique on $A$.
\end{proposition}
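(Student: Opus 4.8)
The plan is to argue by contradiction, exactly in the spirit of the proof of Proposition~\ref{prop:trace-unique-rect}, but exploiting that the density of $\Omega$ at a point of $A$ is \emph{full} rather than merely positive, so that the relevant line segments can be captured without any rectifiability hypothesis. Assume the approximative trace is nonunique on $A$. Then Lemma~\ref{lem:nonuni-witness} furnishes a compact $K\subset A$ with $\Hm(K)>0$, an open bounded smooth $U\supset K$ with $\dist(U,\Gamma_\nloc)>0$, and a sequence $(u_n)$ in $\Wone\cap\Cbar$ with $0\le u_n\le 1$, $\supp u_n\Subset U$, $u_n=1$ on $K$, and $u_n\to 0$ in $\Wone$. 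Since $\Hm$ restricted to $K$ is a Radon measure, almost every $z_0\in K$ is a point where simultaneously $D(\Omega,z_0)=1$, $\Theta(K,z_0)\le\overline\Theta(K,z_0)<\infty$ (indeed we may take $z_0$ to be a point of finite upper $(d-1)$-density of $K$, which holds $\Hm$-a.e.\ on $K$ since $\Hm(K)<\infty$), and $K$ has positive lower $(d-1)$-density at $z_0$ along some subsequence of radii --- more precisely $\overline\Theta(K,z_0)>0$, which again holds $\Hm$-a.e.

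Fix such a $z_0$ and work in coordinates centred at $z_0$; let $C_r=(-r,r)^d$ and split $C_r=L_r^-\cup L_r^+$ into its lower and upper halves in the $e_d$ direction. Because $D(\Omega,z_0)=1$, for every $\sigma>0$ we have $\meas{C_r\setminus\Omega}\le\sigma(2r)^d$ for all small $r$; in particular $\meas{L_r^-\setminus\Omega}\le\sigma(2r)^d$ and likewise for $L_r^+$. The key geometric observation is that for a fixed small $r$, a Fubini slicing of $C_r$ by lines parallel to $e_d$ shows that, apart from a set of $x'\in(-r,r)^{d-1}$ of $(d-1)$-measure at most $\sqrt\sigma\,(2r)^{d-1}$ (say), the vertical line $\{x'\}\times(-r,r)$ meets the complement of $\Omega$ in a set of $\Hm[1]$-measure at most $\sqrt\sigma\,(2r)$; combining the bounds from $L_r^-$ and $L_r^+$ we get that for most $x'$ there is a point $a(x')$ in the lower quarter and a point $b(x')$ in the upper quarter of the segment with $a(x'),b(x')\in\Omega$ and with the \emph{whole} sub-interval between any consecutive excursions into $\Omega^\compl$ of total length close to $2r$. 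Now I intend to use that $K$ has positive upper $(d-1)$-density at $z_0$: choosing $r$ in a suitable subsequence we get $\Hm(K\cap C_r)\ge c(2r)^{d-1}$ for a fixed $c>0$, while $\Hm(\Gamma\cap C_r)<\infty$ (as $z_0\in\Gamma_\loc$) is finite but need not be small. This is the delicate point and I address it next.

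The obstacle is that, unlike in Proposition~\ref{prop:trace-unique-rect}, we have no control on the number of times a vertical line crosses $\Gamma$, so we cannot directly say ``$u_n$ jumps from $0$ to $1$'' along a single controlled segment. To handle this I would argue as follows. Pick a point $z_1\in K\cap C_r$; since $z_1\in K$ we have $u_n=1$ at $z_1$, and since $D(\Omega,z_0)=1$ most vertical lines in a small cube around $z_1$ lie in $\Omega$ apart from a short bad set. Using continuity of $u_n$ together with Lemma~\ref{lem:reg-lines} applied to $u_n$ (which gives absolute continuity along $\Hm$-a.e.\ vertical line, up to the first boundary hit) I can compare the value $1$ of $u_n$ near $K$ with the value of $u_n$ near $\partial C_r$ (where $u_n$ eventually vanishes since $\supp u_n\Subset U$, once $r$ is small enough that $C_r\subset U$). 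Concretely, for $\Hm$-a.e.\ $x'$ in the good set, the segment $\{x'\}\times(-r,r)$ decomposes into at most countably many maximal open subintervals contained in $\Omega$; on each such subinterval $u_n$ is absolutely continuous and agrees with the appropriate boundary value at the endpoints, and summing $\int|\partial_d u_n|$ over these pieces is bounded by $\|u_n\|_{1,p;C_r\cap\Omega}$. The combination ``the segment is in $\Omega$ except for a set of length $\le\sqrt\sigma(2r)$'' plus ``$u_n$ passes near the value $1$ somewhere in the slab around $K$ and near $0$ somewhere in the slab near $\partial C_r$'' forces, via the fundamental theorem of calculus on each maximal subinterval and the smallness of the exceptional set, a total variation of $u_n$ along the slice bounded below by a fixed positive constant for a set of $x'$ of $(d-1)$-measure bounded below by a fixed positive constant. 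Integrating in $x'$ and applying H\"older as in Proposition~\ref{prop:trace-unique-rect} yields $0<c'\le C\|u_n\|_{1,p;\Omega}\to0$, a contradiction. I expect the hardest step to be the bookkeeping that turns the full-density condition into a clean statement of the form ``for a definite proportion of vertical slices, $u_n$ must change by a definite amount, with the excursions outside $\Omega$ contributing negligibly'' --- in particular, making sure that the short bad set on each slice cannot be where $u_n$ does all its varying, which is exactly where $D(\Omega,z_0)=1$ (rather than positive upper density) is essential.
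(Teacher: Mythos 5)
There is a genuine gap at exactly the step you flag as the hardest one, and the full-density hypothesis does not close it in the way you hope. Along a vertical slice $L=\{x'\}\times(-r,r)$ the set $L\cap\Omega$ is a countable union of maximal open intervals; Lemma~\ref{lem:reg-lines} controls the variation of $u_n$ on each of these by $\int_{L\cap\Omega}\abs{\partial_d u_n}$, but between consecutive intervals the slice makes an excursion into $\RR^d\setminus\Omega$, and the change of $u_n$ across such an excursion is \emph{not} a derivative term. The condition $D(\Omega,z_0)=1$ only bounds the total \emph{length} of these excursions on most slices; what your final inequality needs is a bound on the total \emph{oscillation} of $u_n$ across them. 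Since $u_n$ is merely continuous on $\clos{\Omega}$, with a modulus of continuity that is neither quantitative nor uniform in $n$, and since a slice may have countably many excursions, the sum of the oscillations across the gaps is not controlled by the sum of their lengths: $u_n$ can equal $1$ on one maximal interval and $0$ on the next with zero derivative on both, the entire drop occurring ``across'' an arbitrarily short gap (think of a Cantor-function profile along the slice when $\Gamma$ meets $L$ in a null but uncountable set --- a situation fully compatible with $D(\Omega,z)=1$ on $A$, e.g.\ $\Omega=B(0,1)\setminus F$ with $\meas{F}=0$ and $\Hm(F)>0$). No step of your argument excludes this scenario, so the claimed lower bound on $\int\abs{\partial_d u_n}$ over a definite proportion of slices does not follow.

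The paper's proof avoids slicing $u_n$ altogether. It applies the coarea formula to extract a superlevel set $E_n=[u_n>t]$ with $P(E_n,\Omega)\le\frac1n$ and then estimates the perimeter of $E_n$ in all of $\RR^d$: one has $\mbdy E_n\subset(\Omega\cup\Gamma)\cap U$, the portion of $\mbdy E_n$ in $K$ is $\Hm$-null precisely because $D(\Omega,z)=1$ forces $D(E_n,z)=1$ at $\Hm$-a.e.\ $z\in K$ (this is where full, rather than merely positive, density enters), and the portion in $U\cap\Gamma\setminus K$ has measure at most $\eps$ after choosing $U$ with $\Hm(U\cap\Gamma)\le\Hm(K)+\eps$. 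Thus $\one_{E_n}$ is a $\BV(\RR^d)$ competitor showing that the Federer--Ziemer capacity $\alpha(K)$ vanishes, and the identity $\alpha=\Gamma_1$ from~\cite{FZ72} together with the fact that $\Gamma_1$-null sets are $\Hm$-null yields $\Hm(K)=0$. The jumps of $u_n$ across $\Gamma$ that defeat your slicing argument are exactly the quantity $\Hm(\mbdy E_n\cap\Gamma)$ that this global perimeter bookkeeping controls. If you wish to retain a sliced flavour, you would have to slice the \emph{sets} $E_n$ (counting boundary crossings of a set of finite perimeter along lines) rather than integrate $\abs{\partial_d u_n}$ --- which is essentially a reformulation of the paper's argument, not a shortcut around it.
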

\begin{proof}
We use notation and results from~\cite{FZ72}.
Let $K\subset A\cap\Gamma_\sing^p$ be compact and $\eps>0$.
Choose a $\delta>0$ sufficiently small such that $\Hm(U\cap \Gamma)\le \Hm(K)+\eps$,
where $U := \{ x\in\RR^d : \dist(x,K)<\delta \}$.
Note that $U$ is open in $\RR^d$ and $K\subset U$.

By Lemma~\ref{lem:nonuni-witness} there exists a sequence $(u_n)$ in $\Wone\cap\Cbar$ such that $0\le u_n\le 1$, $u_n=1$ on $K$, $\lim_{n\to\infty}u_n = 0$ in $\Wone$ and $\supp u_n\Subset U$.
After passing to a subsequence, we may assume that $\int_\Omega\abs{\nabla u_n}\le \frac{1}{n}$ for all $n\in \NN$.
Let $n$ be fixed.
By the coarea formula there exists a $t=t(n)\in (0,1)$ such that
\[
    P([u_n>t],\Omega)\le \int_\Omega\abs{\nabla u_n } \le \frac{1}{n},
\]
where $[u_n>t] = \{x\in\Omega : u_n(x)>t\}$.
Observe that $\mbdy[u_n>t]\subset(\Omega\cup \Gamma)\cap U$. Moreover, by assumption 
$D([u_n>t],z)=1$ for $\Hm$-a.e.\ $z\in K$ since $u_n>t$ on a neighbourhood of $K$ in $\clos{\Omega}$.
Hence $\Hm(K\cap\mbdy[u_n>t])=0$.
Therefore
\begin{align*}
    P([u_n>t],\RR^d) &= \Hm(\mbdy[u_n>t]) \\
        & = \Hm(\Omega\cap\mbdy[u_n>t]) + \Hm(U\cap\Gamma\cap\mbdy[u_n>t]\setminus K) \\
        & \le P([u_n>t],\Omega) + \Hm(U\cap\Gamma\setminus K) \\
        & \le \frac{1}{n}+\eps.
\end{align*}
Then $w := \one_{[u_n>t]}$ is an element of $\BV(\RR^d)$, $D([w\ge 1],z)=1$ for $\Hm$-a.e.\ $z\in K$ and
\[
    \norm{D w}(\RR^d) = P([w\ge 1],\RR^d) = P([u_n>t],\RR^d) \le\frac{1}{n}+\eps.
\]
Since such a function $w$ can be found for all $n\in\NN$ and $\eps>0$,
it follows that $\alpha=\alpha(K)=0$, where $\alpha$ is as in~\cite[Proposition on p.\,145]{FZ72}.
As $\alpha(K)=\Gamma_1(K)$, where $\Gamma_1$ denotes the $1$-capacity in $\RR^d$ as defined in~\cite{FZ72},
one obtains $\Gamma_1(K)=0$. By~\cite[final Proposition in Section~4]{FZ72} it follows that $\Hm(K)=0$.
\end{proof}
\begin{remark}
The proof of the identity $\alpha=\Gamma_1$ in~\cite{FZ72} uses refined machinery from the theory of currents.
In the special case considered here a technically simpler proof of Proposition~\ref{prop:dens1-uniq} can be given along the lines of~\cite[Theorem~5.6.3]{EG92}.
\end{remark}

By combining our previous results we now obtain the following measure geometric criterion for the uniqueness of the approximative trace.
A connection between $\Omega$ having Lebesgue density zero and the nonuniqueness phenomenon was suggested
in~\cite[p.\,85]{BD2010:alt-fk} and~\cite[p.\,941]{BG2010}.

\begin{theorem}\label{thm:sing-dens-0}
Let $\Omega\subset\RR^d$ be open and $1\le p\le\infty$. Then $D(\Omega,z)=0$ for $\Hm$-a.e.\ $z\in\Gamma_\sing^p$.
In particular, if $A\subset\Gamma_\loc$ is Borel such that $\overline{D}(\Omega,z)>0$ for $\Hm$-a.e.\ $z\in A$,
then the approximative trace is unique on $A$.
\end{theorem}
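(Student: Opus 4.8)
The plan is to decompose $\Gamma_\sing^p$ according to the density of $\Omega$ and dispatch each piece using the propositions already established. Recall from Lemma~\ref{lem:dens-Gloc} that up to an $\Hm$-nullset $\Gamma_\loc = \Gamma_\loc^0 \cup \Gamma_\loc^{1/2} \cup \Gamma_\loc^1$, where $\Gamma_\loc^t = \Gamma_\loc \cap \Omega^t$; in particular, outside an $\Hm$-nullset the density $D(\Omega,z)$ exists and lies in $\{0,\tfrac12,1\}$ for $\Hm$-a.e.\ $z\in\Gamma_\loc$, hence for $\Hm$-a.e.\ $z\in\Gamma_\sing^p$ (since $\Gamma_\sing^p\subset\Gamma_\loc$ by Corollary~\ref{cor:Gamma_s}). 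So it suffices to show $\Hm(\Gamma_\sing^p\cap\Gamma_\loc^{1/2})=0$ and $\Hm(\Gamma_\sing^p\cap\Gamma_\loc^1)=0$.

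For the density-$1$ part, I would apply Proposition~\ref{prop:dens1-uniq} with $A=\Gamma_\loc^1$: since $D(\Omega,z)=1$ for $\Hm$-a.e.\ $z\in\Gamma_\loc^1$, the approximative trace is unique on $\Gamma_\loc^1$, i.e.\ $\Hm(\Gamma_\loc^1\cap\Gamma_\sing^p)=0$. For the density-$\tfrac12$ part, I would use that $\Gamma_\loc^{1/2}$ is rectifiable (again by Lemma~\ref{lem:dens-Gloc}) and that $\overline{D}(\Omega,z)=\tfrac12>0$ for $\Hm$-a.e.\ $z\in\Gamma_\loc^{1/2}$; Proposition~\ref{prop:trace-unique-rect} then gives that the approximative trace is unique on $\Gamma_\loc^{1/2}$, i.e.\ $\Hm(\Gamma_\loc^{1/2}\cap\Gamma_\sing^p)=0$. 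Combining the two vanishing statements with the density trichotomy yields $D(\Omega,z)=0$ for $\Hm$-a.e.\ $z\in\Gamma_\sing^p$.

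The second assertion is then immediate: if $A\subset\Gamma_\loc$ is Borel with $\overline D(\Omega,z)>0$ for $\Hm$-a.e.\ $z\in A$, then in particular $D(\Omega,z)\ne 0$ for $\Hm$-a.e.\ $z\in A$, so by the first part $\Hm(A\cap\Gamma_\sing^p)=0$, which is precisely the statement that the approximative trace is unique on $A$. One should remember to discard the case $p=\infty$ at the outset via Remark~\ref{rem:unique-pinf}, where $Z^\infty=\{0\}$ so there is nothing to prove.

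There is no real obstacle here: the theorem is essentially a bookkeeping combination of Proposition~\ref{prop:trace-unique-rect}, Proposition~\ref{prop:dens1-uniq} and Lemma~\ref{lem:dens-Gloc}. The only mildly delicate point is making sure the density trichotomy genuinely holds $\Hm$-a.e.\ on all of $\Gamma_\loc$ (not merely on each compact subset) — but this is exactly what the $\sigma$-compactness argument in the proof of Lemma~\ref{lem:dens-Gloc} already delivers, so it can be invoked directly. All the heavy geometric measure theory has been absorbed into the two propositions being cited.
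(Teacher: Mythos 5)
Your proposal is correct and follows essentially the same route as the paper: decompose the boundary by Lebesgue density into the $0$, $\tfrac12$ and $1$ parts, dispatch the rectifiable density-$\tfrac12$ part with Proposition~\ref{prop:trace-unique-rect} and the density-$1$ part with Proposition~\ref{prop:dens1-uniq}. The only (cosmetic) difference is that you invoke Lemma~\ref{lem:dens-Gloc} for the global trichotomy, whereas the paper re-runs the same localisation (compact $K\Subset U$ with $U\cap\Omega$ of finite perimeter, then Federer's theorem) inside the proof of the theorem.
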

\begin{proof}
Suppose that $K\subset\Gamma_\sing^p$ is compact. Recall that $\Gamma_\sing^p\subset\Gamma_\loc$.
Similarly as in the proof of Proposition~\ref{prop:dens1-uniq} we can consider $U\cap\Omega$ instead of $\Omega$ with $K\Subset U$ and $\clos{U}\cap\Gamma\subset\Gamma_\loc$. We may suppose that $U$ is smooth and bounded.
Clearly $K\subset\partial(U\cap\Omega)$ and the density properties of $U\cap\Omega$ and $\Omega$ are identical at all points in $K$.

Furthermore, $U\cap\Omega$ has finite perimeter. 
So it follows from Proposition~\ref{prop:trace-unique-rect} that the approximative trace is 
unique on $K\cap\mbdy(U\cap\Omega)$ as $\mbdy(U\cap\Omega)$ is rectifiable
and $\overline{D}(U\cap\Omega,z)>0$ for all $z\in\mbdy(U\cap\Omega)$.
By Proposition~\ref{prop:dens1-uniq} the approximative trace is unique on $K\cap(U\cap\Omega)^1$.
Together this implies that
\[
    \Hm\bigl(K\cap\bigl\{z\in\RR^d: D(U\cap\Omega,z)=1\vee(\underline{D}(U\cap\Omega,z)<1 \wedge \overline{D}(U\cap\Omega,z)>0)\bigr\}\bigr)=0.
\]
In other words, $D(\Omega,z)=D(U\cap\Omega,z)=0$ for $\Hm$-a.e.\ $z\in K$.
\end{proof}
\begin{remark}
We shall see in Example~\ref{ex:p-dep} that it is not sufficient for the nonuniqueness of the approximative trace that
$\Omega$ has density $0$ on a substantial part of $\Gamma_\loc$. In fact, in general the set $\Gamma_\sing^p$ does depend on $p$.
\end{remark}

The following corollary is immediate.
\begin{corollary}
Let $\Omega\subset\RR^d$ be open. 
Then the approximative trace in $\Wone[p][\RR^d\setminus\clos{\Omega}]$ is unique on $\Gamma_\sing^p\cap\partial(\RR^d\setminus\clos{\Omega})$, where $\Gamma_\sing^p$ is considered with respect to $\Omega$ as usual.
\end{corollary}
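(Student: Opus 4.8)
The plan is to deduce this from Theorem~\ref{thm:sing-dens-0} applied to the complementary open set $\Omega' := \RR^d\setminus\clos{\Omega}$, once we know that the part of $\Gamma_\sing^p$ lying on $\partial\Omega'$ is contained in the density\nobreakdash-one set of $\Omega'$. First I would fix the bookkeeping: write $\Gamma' := \partial\Omega'$ and $A := \Gamma_\sing^p\cap\Gamma'$. Since $\Gamma_\sing^p$ is Borel by Corollary~\ref{cor:Gamma_s} and $\Gamma'$ is closed, $A$ is Borel. Because $\partial\Omega' = \partial(\clos{\Omega})\subset\partial\Omega = \Gamma$ and $\Gamma_\sing^p\subset\Gamma_\loc$, every $z\in A$ admits an $r>0$ with $\Hm(\Gamma\cap B(z,r))<\infty$, and then also $\Hm(\Gamma'\cap B(z,r))<\infty$; hence $A$ is contained in the locally finite part $\Gamma'_\loc$ of $\partial\Omega'$, so that the statement ``the approximative trace in $\Wone[p][\Omega']$ is unique on $A$'' is meaningful and equals the assertion to be proved.

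Next I would compare the densities of $\Omega$ and $\Omega'$ along $\Gamma_\loc$. For $z\in\Gamma_\loc$ choose $r>0$ with $\Hm(\Gamma\cap B(z,r))<\infty$; a set of finite $(d-1)$\nobreakdash-dimensional Hausdorff measure is Lebesgue null, so $\Gamma\cap B(z,r)$ carries no Lebesgue mass and therefore $\meas{\Omega\cap B(z,\rho)} = \meas{\clos{\Omega}\cap B(z,\rho)}$ for all $0<\rho<r$. Passing to densities gives $\overline{D}(\Omega',z) = 1-\underline{D}(\clos{\Omega},z) = 1-\underline{D}(\Omega,z)$. By Theorem~\ref{thm:sing-dens-0} applied to $\Omega$ one has $D(\Omega,z)=0$, in particular $\underline{D}(\Omega,z)=0$, for $\Hm$\nobreakdash-a.e.\ $z\in\Gamma_\sing^p$; hence $\overline{D}(\Omega',z)=1>0$ for $\Hm$\nobreakdash-a.e.\ $z\in\Gamma_\sing^p$, and a fortiori for $\Hm$\nobreakdash-a.e.\ $z\in A$.

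Finally I would invoke the second part of Theorem~\ref{thm:sing-dens-0}, this time for the open set $\Omega'$ and the Borel set $A\subset\Gamma'_\loc$ on which $\overline{D}(\Omega',z)>0$ holds $\Hm$\nobreakdash-a.e.; it yields that the approximative trace in $\Wone[p][\Omega']$ is unique on $A=\Gamma_\sing^p\cap\partial(\RR^d\setminus\clos{\Omega})$, which is exactly the claim (the case $\RR^d\setminus\clos{\Omega}=\emptyset$ being vacuous, as then $A=\emptyset$). All steps are routine, and I expect no genuine obstacle: the only point requiring a moment's care is precisely the boundary bookkeeping in the first paragraph together with the observation that on $\Gamma_\loc$ the topological boundary is locally Lebesgue null, so that passing to the complement simply interchanges the densities $0$ and $1$.
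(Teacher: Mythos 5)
Your proof is correct and is precisely the intended derivation: the paper states the corollary as an immediate consequence of Theorem~\ref{thm:sing-dens-0}, and your argument (density $0$ of $\Omega$ on $\Gamma_\sing^p$ plus local Lebesgue-nullity of $\Gamma_\loc$ giving $\overline{D}(\RR^d\setminus\clos{\Omega},z)=1$, then the second part of Theorem~\ref{thm:sing-dens-0} applied to the complementary open set) fills in exactly the bookkeeping the paper leaves implicit.
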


The next corollary is a generalisation of~\cite[Proposition~5.5]{AW03}.
It shows that if part of the boundary is locally finite,
then not all of it can belong to the singular part $\Gamma_\sing^p$. 
\begin{corollary}
Let $\Omega\subset\RR^d$ be open and $1\le p\le\infty$. If $\Hm(\Gamma_\loc)>0$, then
$\Hm(\Gamma_\loc\setminus\Gamma_\sing^p)>0$.
\end{corollary}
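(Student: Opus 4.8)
The final statement to prove is:

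\begin{corollary}
Let $\Omega\subset\RR^d$ be open and $1\le p\le\infty$. If $\Hm(\Gamma_\loc)>0$, then
$\Hm(\Gamma_\loc\setminus\Gamma_\sing^p)>0$.
\end{corollary}

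The plan is to argue by contradiction, using Theorem~\ref{thm:sing-dens-0} together with the elementary fact that $\Omega':=\RR^d\setminus\clos\Omega$ is again open. Suppose $\Hm(\Gamma_\loc)>0$ while $\Hm(\Gamma_\loc\setminus\Gamma_\sing^p)=0$. Since $\Gamma_\sing^p\subset\Gamma_\loc$, this means $\Gamma_\loc=\Gamma_\sing^p$ up to an $\Hm$\nobreakdash-nullset, so Theorem~\ref{thm:sing-dens-0} yields $D(\Omega,z)=0$ for $\Hm$-a.e.\ $z\in\Gamma_\loc$. For every $z\in\Gamma_\loc$ there is an $r>0$ with $\Hm(\Gamma\cap B(z,r))<\infty$, hence $\meas{\Gamma\cap B(z,r)}=0$ and therefore $D(\Gamma,z)=0$. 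From the disjoint decomposition $\RR^d=\Omega\cup\Gamma\cup\Omega'$ one has, for each $\rho>0$, that $\meas{B(z,\rho)}=\meas{\Omega\cap B(z,\rho)}+\meas{\Gamma\cap B(z,\rho)}+\meas{\Omega'\cap B(z,\rho)}$; dividing by $\meas{B(z,\rho)}$ and letting $\rho\to0$ shows $D(\Omega',z)=1$ for $\Hm$-a.e.\ $z\in\Gamma_\loc$.

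Next I would observe that $\Omega'$ carries essentially no essential boundary along $\Gamma_\loc$. Indeed $\partial\Omega'=\clos{\Omega'}\cap\clos\Omega\subset\clos\Omega\setminus\Omega=\Gamma$, so $\mbdy\Omega'\subset\Gamma$; and since $D(\Omega',z)=1$ implies $z\notin\mbdy\Omega'$, we get $\Hm(\Gamma_\loc\cap\mbdy\Omega')=0$. Now fix $z_0\in\Gamma_\loc$ with $D(\Omega',z_0)=1$, possible because $\Hm(\Gamma_\loc)>0$. As $\Gamma_\loc$ is relatively open in $\Gamma$, choose $r_0>0$ with $\Gamma\cap B(z_0,r_0)\subset\Gamma_\loc$ and $\Hm(\Gamma\cap B(z_0,r_0))<\infty$, and set $B:=B(z_0,r)$ for a fixed $r\in(0,r_0)$. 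Then $\mbdy\Omega'\cap B\subset\Gamma_\loc$, so $\Hm(\mbdy\Omega'\cap B)=0$. Consequently $\Omega'\cap B$ is a set of finite perimeter — its essential boundary is contained in $(\mbdy\Omega'\cap B)\cup\partial B$, which has finite $\Hm$-measure (Federer's criterion, see~\cite[Theorem~4.5.11]{Federer1969}) — and its relative perimeter in $B$ equals $\Hm(\mbdy\Omega'\cap B)=0$. Since $B$ is connected, $\one_{\Omega'}$ is $\meas{\cdot}$-a.e.\ constant on $B$, so $\meas{\Omega'\cap B}\in\{0,\meas{B}\}$; because $D(\Omega',z_0)=1$ the value is $\meas{B}$ once $r$ is small enough. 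Hence $\meas{B\setminus\Omega'}=0$, that is $\meas{(B\cap\Omega)\cup(B\cap\Gamma)}=0$, and as $\meas{B\cap\Gamma}=0$ this forces $\meas{B\cap\Omega}=0$. But $z_0\in\Gamma=\partial\Omega$, so $B\cap\Omega$ is a nonempty open set and has strictly positive Lebesgue measure — a contradiction, which proves the corollary.

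The argument is essentially routine once one passes to the complementary open set $\Omega'$: the point is that density $0$ of $\Omega$ along $\Gamma_\loc$ means density $1$ of $\Omega'$ there, forcing $\Omega'$ to be locally $\meas{\cdot}$-co-null and hence $\Omega$ to be locally $\meas{\cdot}$-null, which no nonempty open set can be. The only step needing a little care is the bookkeeping near $z_0$: although $\mbdy\Omega'$ could be $\Hm$-large inside $\Gamma_\nloc$, it is $\Hm$-negligible in a small ball about $z_0$ precisely because that ball meets $\Gamma$ only within $\Gamma_\loc$, so that the triviality of $BV$ functions with vanishing gradient on a connected open set applies. No relative $p$-capacity or other $p$-dependent tool enters, which is why the statement holds uniformly for all $1\le p\le\infty$, including $p=1$ and $p=\infty$.
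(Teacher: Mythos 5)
Your proof is correct and follows essentially the same route as the paper's: argue by contradiction, invoke Theorem~\ref{thm:sing-dens-0} to get $D(\Omega,z)=0$ at $\Hm$-a.e.\ point of $\Gamma_\loc$, localise to a ball $B$ meeting $\Gamma$ only inside $\Gamma_\loc$ so that $\Hm(\mbdy\Omega\cap B)=0$, and then use Federer's criterion together with the theory of sets of finite perimeter to force local triviality. The only substantive difference is the final step: the paper applies the relative isoperimetric inequality to $\Omega$ directly, concluding $\meas{B\setminus\Omega}=0$ and hence $D(\Omega,z)=1$, whereas you pass to the complement $\Omega'=\RR^d\setminus\clos{\Omega}$ and use that a set of zero relative perimeter in a connected open set is a.e.\ trivial, concluding $\meas{B\cap\Omega}=0$; both contradictions are sound, and your variant has the minor advantage of not needing to treat the case $d=1$ separately.
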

\begin{proof}
The case $d=1$ is easy. Hence we suppose $d\ge 2$.
Assume for contradiction that $\Hm(\Gamma_\loc)>0$ and $\Hm(\Gamma_\loc\setminus\Gamma_\sing^p)=0$.
Let $z\in\Gamma_\sing^p$ be such that $D(\Omega,z)=0$. By Theorem~\ref{thm:sing-dens-0}, $\Hm$-a.e.\ $z\in\Gamma_\loc$ has this property.
For $r>0$ sufficiently small one has 
\[
    \Hm(\mbdy\Omega\cap B(z,r))\le\Hm(\Gamma\cap B(z,r))<\infty.
\]
Moreover, Theorem~\ref{thm:sing-dens-0} implies that $\Hm(\mbdy\Omega\cap\Gamma_\sing^p)=0$ and therefore by assumption $\Hm(\mbdy\Omega\cap B(z,r))=0$. 
So by the relative isoperimetric inequality~\cite[(3.43)]{AFP00}, the formula~\cite[(3.62)]{AFP00} and Federer's characterisation of
sets of locally finite perimeter~\cite[Theorem~4.5.11]{Federer1969} 
there exists a $C>0$ such that
\[
    \min\{\meas{\Omega\cap B(z,r)},\meas{B(z,r)\setminus \Omega}\}^{(d-1)/d} \le C P(\Omega,B(z,r))=C\Hm(\mbdy\Omega\cap B(z,r))=0.
\]
As $z\in\Gamma$ and $\Omega$ is open, we must have $\meas{B(z,r)\setminus\Omega}=0$.
This implies $D(\Omega,z)=1$, which is a contradiction.
\end{proof}

\section{Uniqueness of the trace in two dimensions}\label{sec:uniq-2d}

In~\cite{AW03} the first example of a disconnected open set in $\RR^2$ with nonunique approximative trace was given.
This example was then used to construct a connected example in three dimensions. 
A simpler three-dimensional, connected example was given in~\cite[Section 3, last paragraph on p.\,941]{BG2010}.
In private communication with the author, Wolfgang Arendt and Tom ter Elst raised the question of whether there exists a connected domain in $\RR^2$ with nonunique approximative trace.
In this section we prove the surprisingly strong statement that the approximative trace for a two-dimensional connected domain is always unique, thereby answering the question in the negative.

The results in~\cite{ACMM2001} (see also~\cite[Theorem~3]{FF09:fin-perim-plane}) 
imply that a connected, bounded domain in $\RR^2$ with a boundary of finite one-dimensional Hausdorff measure has a very specific structure: it is, up to a set of Lebesgue measure $0$, the interior of a rectifiable Jordan curve, from which the interior of possibly countably many rectifiable Jordan curves have been removed.
The next proposition makes this precise. It follows directly from~\cite[Proposition~2, Theorem~4 and Corollary~1]{ACMM2001}.
\begin{proposition}\label{prop:acmm-jordan}
Let $\Omega\subset\RR^2$ be connected, bounded and open such that $\Hm[1](\partial\Omega)<\infty$.
Then there exist a countable set $J$ and rectifiable closed Jordan curves $\gamma$, $\gamma_j$ for all $j\in J$ such that $\mbdy\Omega\equiv\gamma\cup\bigcup_{j\in J}\gamma_j$ modulo $\Hm[1]$, $\inter\gamma_j\subset\inter\gamma$ for all $j\in J$, $\inter \gamma_j\cap\inter\gamma_k=\emptyset$ for all $j,k\in J$ with $j\ne k$ and
\[
	A := \inter\gamma\setminus\bigcup_{j\in J}\inter\gamma_j \equiv\Omega\text{ modulo $\Hm[2]$.}
\]
\end{proposition}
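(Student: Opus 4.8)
Proposition~\ref{prop:acmm-jordan} is essentially a direct quotation of the structure theorem for indecomposable sets of finite perimeter in the plane from~\cite{ACMM2001}, so the proof will be short: the work is matching up the hypotheses and reading off the conclusion. First I would replace $\Omega$ by a measure-theoretically equivalent set of finite perimeter. By Federer's criterion~\cite[Theorem~4.5.11]{Federer1969}, since $\Hm[1](\Gamma)<\infty$ the open set $\Omega$ has finite perimeter; moreover $\Gamma_\loc=\Gamma$ here because $\Gamma$ itself has finite $\Hm[1]$-measure. The key point is that a connected open set is \emph{indecomposable} as a set of finite perimeter (any nontrivial partition into finite-perimeter pieces with additive perimeters would, by the relative isoperimetric inequality and a density argument, disconnect $\Omega$ up to a Lebesgue-null set, contradicting connectedness of the open set $\Omega$). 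This puts us exactly in the setting of~\cite[Proposition~2, Theorem~4 and Corollary~1]{ACMM2001}.

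Next I would invoke those three cited results in turn. Their decomposition theorem says that an indecomposable planar set $A$ of finite perimeter with $|A|<\infty$ is, modulo $\Hm[2]$, the interior of a rectifiable Jordan curve $\gamma$ with the interiors of an at most countable, pairwise ``nested-or-disjoint'' family of rectifiable Jordan curves $\gamma_j$, $j\in J$, removed, with $\inter\gamma_j\subset\inter\gamma$ and the $\inter\gamma_j$ pairwise disjoint; and the reduced boundary (hence, by rectifiability, $\mbdy A$ up to $\Hm[1]$) equals $\gamma\cup\bigcup_{j\in J}\gamma_j$ modulo $\Hm[1]$. Boundedness of $\Omega$ guarantees $|A|<\infty$ and that all the curves involved are genuine (bounded) Jordan curves. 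Finally, since $\Omega$ is open and $A\equiv\Omega$ modulo $\Hm[2]$, the essential boundary $\mbdy\Omega$ coincides with $\mbdy A$ up to an $\Hm[1]$-nullset, which gives $\mbdy\Omega\equiv\gamma\cup\bigcup_{j\in J}\gamma_j$ modulo $\Hm[1]$ as claimed. (Here I use implicitly, via~\cite[Theorems~3.59 and~3.61]{AFP00}, that for a finite-perimeter set the reduced boundary, the measure-theoretic boundary and the one-half-density set all agree up to $\Hm[1]$-measure zero, so there is no ambiguity in which ``boundary'' appears.)

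**Main obstacle.** There is essentially no analytic obstacle; the only genuinely non-bookkeeping step is the implication \emph{$\Omega$ connected open $\Rightarrow$ $\Omega$ indecomposable}. One has to be a little careful: connectedness of the \emph{open} set does not literally coincide with the measure-theoretic indecomposability of~\cite{ACMM2001}, so the argument is that a decomposition $\Omega = \Omega_1\cup\Omega_2$ (modulo null sets, with $P(\Omega)=P(\Omega_1)+P(\Omega_2)$ and $|\Omega_1|,|\Omega_2|>0$) would force $\Hm[1](\Omega\cap\mbdy\Omega_1\cap\mbdy\Omega_2)=0$; combined with the fact that $\Omega_1$ has density $0$ or $1$ at $\Hm[1]$-a.e.\ point of $\Omega$ and the relative isoperimetric inequality applied in small balls inside $\Omega$, one deduces that the sets $\{D(\Omega_1,\cdot)=1\}$ and $\{D(\Omega_1,\cdot)=0\}$ form a nontrivial relatively clopen partition of the connected space $\Omega$, a contradiction. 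After that, everything is a citation, and I would keep the proof to the two or three sentences the paper already indicates.
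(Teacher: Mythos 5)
Your proposal is correct and takes essentially the same route as the paper, which simply observes that the statement ``follows directly from Proposition~2, Theorem~4 and Corollary~1 of~\cite{ACMM2001}'': Federer's criterion gives finite perimeter, Proposition~2 of~\cite{ACMM2001} is precisely the step that a connected open set of finite perimeter is indecomposable (which you re-derive rather than cite), and Theorem~4 and Corollary~1 supply the Jordan-curve decomposition of the essential boundary and the representation of $\Omega$ modulo $\Hm[2]$. Nothing further is needed.
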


The following lemma is an ingredient in the proof of Proposition~\ref{prop:acmm-jordan}.
\begin{lemma}[{\cite[Lemma~4]{ACMM2001}}]\label{lem:J-curve-rect}
Let $\gamma$ be a rectifiable Jordan curve in $\RR^2$. Then $\Hm[1](\gamma)=P(\inter\gamma )=P(\ext \gamma )$. 
In particular, $\Hm[1]$-a.e.\ point on $\gamma$ is in $\mbdy(\inter\gamma)$.
\end{lemma}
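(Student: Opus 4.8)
The plan is to exploit the Jordan curve theorem together with the characterisation of the perimeter via the reduced/essential boundary. Let $\gamma$ be a rectifiable Jordan curve in $\RR^2$, so $\gamma$ is a homeomorphic image of $\mathbb{S}^1$ and $\Hm[1](\gamma)<\infty$. By the Jordan curve theorem, $\RR^2\setminus\gamma$ has exactly two connected components: the bounded one $\inter\gamma$ and the unbounded one $\ext\gamma$, and both have topological boundary equal to $\gamma$. First I would observe that, since $\gamma$ is rectifiable with finite length, it admits a Lipschitz parametrisation and hence is a $1$-rectifiable set of finite $\Hm[1]$-measure; in particular $\mbdy(\inter\gamma)\subset\partial(\inter\gamma)=\gamma$ has finite $\Hm[1]$-measure, so by Federer's criterion~\cite[Theorem~4.5.11]{Federer1969} the set $\inter\gamma$ has finite perimeter, with $P(\inter\gamma)=\Hm[1](\mbdy(\inter\gamma))\le\Hm[1](\gamma)$; the same applies to $\ext\gamma$ (one can intersect with a large ball to reduce to a bounded set, or argue directly). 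Since $\inter\gamma$ and $\ext\gamma$ are complementary up to the $\Hm[1]$-nullset $\gamma$, they have the same essential boundary and hence $P(\inter\gamma)=P(\ext\gamma)$.

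The substance of the lemma is therefore the reverse inequality $\Hm[1](\gamma)\le P(\inter\gamma)$, equivalently that $\Hm[1]$-a.e.\ point of $\gamma$ actually lies in $\mbdy(\inter\gamma)$ (and not in $(\inter\gamma)^0\cup(\inter\gamma)^1$). Here I would use rectifiability of $\gamma$ to pass, up to an $\Hm[1]$-nullset, to a countable union of Lipschitz graphs, and then invoke the existence of approximate tangent lines: for $\Hm[1]$-a.e.\ $z\in\gamma$ there is a line $V$ through $z$ such that $\gamma$ is, in a small ball $B(z,r)$, concentrated in a thin strip around $V$ (the analogue of Lemma~\ref{lem:reg-rect}\,\ref{en:rect-tanplane}). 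At such a point, $B(z,r)\setminus\gamma$ is, up to small errors, the union of the two half-discs on either side of $V$; since $\gamma$ is connected and separates the plane, the bounded component $\inter\gamma$ must meet \emph{both} half-discs in a set of substantial measure (if it met essentially only one, then near $z$ the curve $\gamma$ would lie in the closure of $\ext\gamma$ only, contradicting $z\in\partial(\inter\gamma)$ for points approached from the $\inter\gamma$ side; a covering/connectedness argument upgrades this to an $\Hm[1]$-a.e.\ statement). Consequently both $\underline D(\inter\gamma,z)$ and $\underline D(\RR^2\setminus\inter\gamma,z)$ are bounded below by a positive constant, so $z\in\mbdy(\inter\gamma)$ for $\Hm[1]$-a.e.\ $z\in\gamma$; combined with $\mbdy(\inter\gamma)\subset\gamma$ this gives $\Hm[1]((\inter\gamma)\symdiff\text{--stuff})$... more precisely $\Hm[1](\gamma\setminus\mbdy(\inter\gamma))=0$, hence $\Hm[1](\gamma)=\Hm[1](\mbdy(\inter\gamma))=P(\inter\gamma)$.

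In practice it is cleanest to avoid redoing this density analysis by hand and instead cite the relevant known result: the fact that for a Jordan curve $\gamma$ of finite length one has $\Hm[1](\gamma)=P(\inter\gamma)$ is precisely~\cite[Lemma~4]{ACMM2001} in the notation there, and one may also appeal to Federer's characterisation of the reduced boundary together with the structure theorem for sets of finite perimeter in $\RR^2$. The main obstacle is the reverse inequality just discussed — controlling the density of $\inter\gamma$ from below at $\Hm[1]$-a.e.\ point of $\gamma$ — since the easy direction ($P(\inter\gamma)\le\Hm[1](\gamma)$, from $\mbdy(\inter\gamma)\subset\gamma$ and Federer's criterion) and the symmetry $P(\inter\gamma)=P(\ext\gamma)$ (from $\inter\gamma$ and $\ext\gamma$ differing by an $\Hm[1]$-nullset) are both routine. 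The final clause, that $\Hm[1]$-a.e.\ point on $\gamma$ lies in $\mbdy(\inter\gamma)$, is then just a restatement of $\Hm[1](\gamma\setminus\mbdy(\inter\gamma))=0$.
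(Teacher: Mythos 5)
The paper gives no proof of this lemma at all: it is quoted verbatim from \cite[Lemma~4]{ACMM2001}, so your closing move --- citing that result --- is exactly what the paper does. Your surrounding observations are also correct and correctly labelled as the routine part: $\mbdy(\inter\gamma)\subset\partial(\inter\gamma)=\gamma$ has finite $\Hm[1]$-measure, so Federer's criterion and theorem give $P(\inter\gamma)=\Hm[1](\mbdy(\inter\gamma))\le\Hm[1](\gamma)$, and since $\inter\gamma$ and $\ext\gamma$ are complementary up to the Lebesgue-null set $\gamma$ they have the same essential boundary, whence $P(\inter\gamma)=P(\ext\gamma)$.

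However, your self-contained sketch of the substantive direction, $\Hm[1]\bigl(\gamma\setminus\mbdy(\inter\gamma)\bigr)=0$, does not hold up as written. Membership of $z$ in the \emph{topological} boundary of $\inter\gamma$ gives no lower bound on the Lebesgue density of $\inter\gamma$ at $z$: at an outward cusp one has $z\in\partial(\inter\gamma)$ yet $D(\inter\gamma,z)=0$, so the parenthetical ``contradicting $z\in\partial(\inter\gamma)$ for points approached from the $\inter\gamma$ side'' proves nothing. Likewise, knowing that $\gamma\cap B(z,r)$ is concentrated in a thin strip around the approximate tangent line does not force $B(z,r)\setminus\gamma$ to consist essentially of the two half-discs, nor does connectedness of $\gamma$ plus the Jordan curve theorem force $\inter\gamma$ to occupy a definite fraction of each half-disc; that is precisely the statement to be proved, namely that the set of $z\in\gamma$ with $D(\inter\gamma,z)\in\{0,1\}$ is $\Hm[1]$-null. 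Excluding a positive-measure set of such degenerate points requires a genuine further argument (in \cite{ACMM2001} this rests on the injective Lipschitz parametrisation of the Jordan curve and the identification of $P(\inter\gamma)$ with its length), so the citation is carrying real weight here rather than being a convenience --- which is fine, and is what the paper itself does, but the heuristic you offer in its place should not be mistaken for a proof.
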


Note that it follows immediately from Lemma~\ref{lem:J-curve-rect} and Proposition~\ref{prop:trace-unique-rect}
that the approximative trace in $\Wone$ is unique if $\Omega\subset\RR^2$ is the interior of a rectifiable Jordan curve.

\begin{theorem}\label{thm:dense-Gloc-2dim}
Let $\Omega\subset\RR^2$ be open and connected. Then for $\Hm[1]$-a.e.\ $z\in\Gamma_\loc$ one has $D(\Omega,z)\in\{\frac{1}{2},1\}$.
\end{theorem}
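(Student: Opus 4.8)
The plan is to reduce the connected, possibly unbounded open set $\Omega\subset\RR^2$ to the local, bounded situation covered by Proposition~\ref{prop:acmm-jordan}, and then read off the density at $\Hm[1]$-a.e.\ boundary point from the Jordan curve structure. First I would localise: since $\Gamma_\loc$ is relatively open in $\Gamma$ and $\sigma$-compact, and since membership in $\Omega^{1/2}$ or $\Omega^1$ is a purely local measure geometric property, it suffices to show the statement for $\Hm[1]$-a.e.\ point of $K$, where $K$ is an arbitrary compact subset of $\Gamma_\loc$. Around such a $K$ choose a bounded open ball (or smooth bounded open set) $U$ with $K\subset U$ and $\dist(\clos U,\Gamma_\nloc)>0$, so that $\Hm[1](\clos U\cap\Gamma)<\infty$. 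The density of $\Omega$ at a point $z\in K$ agrees with the density of $U\cap\Omega$ at $z$, so it is enough to analyse a bounded open set with boundary of finite length.

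The subtlety is that $U\cap\Omega$ need not be connected. However, $U\cap\Omega$ is a bounded open set with $\Hm[1](\partial(U\cap\Omega))<\infty$, so it has at most countably many connected components $(\Omega_i)_{i}$, and for each $i$ one has $\Hm[1](\partial\Omega_i)<\infty$. I would then apply Proposition~\ref{prop:acmm-jordan} to each component $\Omega_i$ separately: modulo $\Hm[2]$ it is $\inter\gamma^{(i)}\setminus\bigcup_{j}\inter\gamma^{(i)}_j$ for rectifiable Jordan curves, and modulo $\Hm[1]$ its measure theoretic boundary is $\gamma^{(i)}\cup\bigcup_j\gamma^{(i)}_j$. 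By Lemma~\ref{lem:J-curve-rect}, $\Hm[1]$-a.e.\ point of each of these Jordan curves lies in $\mbdy(\inter\gamma)$ (resp.\ $\mbdy(\inter\gamma_j)$), hence has density $\tfrac12$ with respect to that interior, and therefore density $\tfrac12$ with respect to $\Omega_i$ for $\Hm[1]$-a.e.\ such point (using that the other Jordan curves contribute an $\Hm[1]$-nullset near a generic point, or invoking $\Theta(\Gamma\setminus(\text{one curve}),z)=0$ for $\Hm[1]$-a.e.\ $z$ via~\cite[Theorem~6.2]{Mat95}). Consequently $D(\Omega_i,z)=\tfrac12$ for $\Hm[1]$-a.e.\ $z\in\partial\Omega_i$, and hence $D(U\cap\Omega,z)\in\{0,\tfrac12\}$ at $\Hm[1]$-a.e.\ point of $\bigcup_i\partial\Omega_i$, with the value $\tfrac12$ precisely where $z$ is in the measure theoretic boundary of the (unique) component it bounds. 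Since $\bigcup_i \partial\Omega_i\supset \Gamma_\loc\cap U$ up to the boundary of $U$ itself, and $K$ was chosen inside $U$, we get $D(\Omega,z)\in\{0,\tfrac12\}$ for $\Hm[1]$-a.e.\ $z\in K$.

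It remains to upgrade $\{0,\tfrac12\}$ to $\{\tfrac12,1\}$, i.e.\ to exclude density $0$ — this is where the \emph{connectedness of $\Omega$} enters and is the main obstacle. The point is that if $z\in\Gamma_\loc$ had $D(\Omega,z)=0$, then near $z$ the set $U\cap\Omega$ would have an $\Hm[1]$-negligible measure theoretic boundary in a small ball $B(z,r)$ (combining $D(\Omega,z)=0$ with the fact that, from the component analysis above, at such a point no component of $U\cap\Omega$ is ``seen'' with positive density, so $z\notin\mbdy\Omega_i$ for any $i$, hence $z$ lies off $\bigcup_i\mbdy\Omega_i$). But a point of $\Gamma$ genuinely on the boundary of the connected open set $\Omega$ cannot be locally surrounded by the complement in measure without cutting $\Omega$: more precisely, I would argue that the set of $z\in\Gamma_\loc$ with $D(\Omega,z)=0$ is $\Hm[1]$-null by the same relative isoperimetric argument used in the last corollary of Section~\ref{sec:geo-cond} — at such $z$ one gets $P(\Omega,B(z,r))=\Hm[1](\mbdy\Omega\cap B(z,r))=0$ for small $r$, forcing $|B(z,r)\setminus\Omega|=0$, hence $D(\Omega,z)=1$, contradicting $D(\Omega,z)=0$ unless the point is removed. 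Thus the density-$0$ set is $\Hm[1]$-null on $\Gamma_\loc$, and we conclude $D(\Omega,z)\in\{\tfrac12,1\}$ for $\Hm[1]$-a.e.\ $z\in\Gamma_\loc$. (The case where $\Omega$ is so small that $\Gamma_\loc$ near $z$ is essentially a single rectifiable Jordan curve with $\Omega$ on one side gives density $\tfrac12$; the value $1$ occurs exactly at $\Hm[1]$-a.e.\ point of those inner boundary curves $\gamma_j$ which, when one removes a \emph{bounded} complementary piece entirely contained in $\Omega$, are actually interior to $\Omega$ from both sides — a phenomenon compatible with connectedness.) The careful bookkeeping of which Jordan curves produce density $\tfrac12$ versus $1$, and the verification that the exceptional sets at each stage are genuinely $\Hm[1]$-null, is the technical heart of the argument; the global-to-local reduction and the isoperimetric exclusion of density $0$ are comparatively routine given the earlier results.
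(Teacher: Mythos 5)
There is a genuine gap, and it sits exactly at the step you yourself label ``comparatively routine'': the exclusion of density $0$. Your isoperimetric argument needs $P(\Omega,B(z,r))=\Hm[1](\mbdy\Omega\cap B(z,r))=0$ for some small $r$, and you try to get this from $D(\Omega,z)=0$ together with $z\notin\bigcup_i\mbdy\Omega_i$. But a point failing to lie \emph{in} the measure theoretic boundary does not make the measure theoretic boundary \emph{null near} that point: the inner Jordan curves $\gamma_j$ (or, in your component-wise setup, infinitely many components $\Omega_i$) can accumulate at $z$ and contribute positive $\Hm[1]$-measure to $\mbdy\Omega\cap B(z,r)$ for every $r>0$, while still $D(\Omega,z)=0$. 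This is precisely the configuration of Figure~\ref{fig:con-dens-0}. The most one can extract at an $\Hm[1]$-generic such $z$ is $\Theta(\mbdy\Omega,z)=0$, i.e.\ $\Hm[1](\mbdy\Omega\cap B(z,r))=o(r)$, and then the relative isoperimetric inequality only yields $\min\{\meas{\Omega\cap B(z,r)},\meas{B(z,r)\setminus\Omega}\}=o(r^2)$ --- which is automatically satisfied at a density-$0$ point and produces no contradiction. A decisive sanity check: your exclusion step nowhere uses connectedness, yet the conclusion is false for disconnected planar open sets (the domain of Figure~\ref{fig:forest} has $D(\Omega,z)=0$ on a segment of positive length in $\Gamma_\loc$), so no argument of this purely local isoperimetric type can close the gap. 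The paper's actual proof of this step is the technical heart of the theorem: after arranging $\Theta(\gamma\cup\bigcup_j\gamma_j,z)=0$ at the point in question, it assumes $D(A,z)=0$, deduces that the pairwise disjoint holes $\inter\gamma_j$ have density $1$ at $z$, reduces to finitely many holes filling most of a small square, and then a projection/crossing argument forces a single curve $\gamma_1$ to have length at least $r/2$ inside that square --- contradicting the vanishing linear density of the curve system.

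Two further points. First, your localisation $U\cap\Omega$ destroys connectedness, which is the one hypothesis doing real work; the paper instead sets $\Omega':=\bigl(B(z_0,2r)\cap\Omega\bigr)\cup\bigl(B(z_0,2r)\setminus\clos{B}(z_0,r)\bigr)$, which remains connected and lets Proposition~\ref{prop:acmm-jordan} be applied to a \emph{single} connected set with one outer curve and disjoint inner holes --- the structure the hard step exploits. Applying the proposition component by component, as you propose, loses exactly the information needed to rule out Figure~\ref{fig:forest}-type accumulation of components. Second, the assertion ``$D(\Omega_i,z)=\tfrac12$ for $\Hm[1]$-a.e.\ $z\in\partial\Omega_i$'' is an overstatement: the Jordan decomposition controls $\mbdy\Omega_i$ modulo $\Hm[1]$, not $\partial\Omega_i$, and the set $\partial\Omega_i\setminus\mbdy\Omega_i$ is precisely where the problem lives (it also contains the density-$1$ points that the theorem's conclusion allows and that your intermediate claim $D\in\{0,\tfrac12\}$ drops).
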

\begin{proof}
Due to Lemma~\ref{lem:dens-Gloc} it suffices to show that $\Hm[1](\Gamma_\loc^0)=0$.
Let $z_0\in \Gamma_\loc$. Choose $r>0$ sufficiently small such that $\Hm[1](B(z_0,2r)\cap\Gamma)<\infty$ 
and $\Omega\setminus\clos{B}(z_0,r)\ne\emptyset$.
Set 
\[
	\Omega' := B(z_0,2r)\cap\Omega\cup(B(z_0,2r)\setminus\clos{B}(z_0,r)).
\]
Then $\Omega'$ is bounded, connected and open with $\Hm[1](\partial\Omega')<\infty$ and 
the upper and lower densities of $\Omega$ and $\Omega'$ at $z$ are equal
for all $z\in B(z_0,r)\cap\Gamma_\loc$.
So we may assume without loss of generality that $\Omega$ satisfies the conditions of Proposition~\ref{prop:acmm-jordan} and we shall use the notation introduced there.
Note that the density properties of $A$ and $\Omega$ are the same at every $z\in\RR^2$ and therefore $\mbdy A=\mbdy\Omega$. Moreover, $D(\Omega,z)=\frac{1}{2}$ for $\Hm[1]$-a.e.\ $z\in\gamma\cup\bigcup_j \gamma_j$.

Let $z\in\partial\Omega\setminus(\mbdy\Omega\cup\gamma\cup\bigcup_j\gamma_j)$.
Then $z\in\ext\gamma$ is impossible, since otherwise there exists an $r>0$ such that $B(z,r)\cap\inter\gamma=\emptyset$ and therefore $0<\Hm[2](\Omega\cap B(z,r))=\Hm[2](A\cap B(z,r))=0$, which would be a contradiction.
Similarly one shows that $z\in\ext\gamma_j$ for all $j\in J$.
As $z\notin\mbdy A$, we obtain $D(A,z)\in\{0,1\}$.
If 
\[
	z\in\inter A=\inter\gamma\setminus\Bigl(\bigcup_j\clos{\inter\gamma_j}\mathbin{\dot{\cup}} \clos{\bigcup_j\gamma_j}\setminus\bigcup_j\gamma_j\Bigr),
\]
then $D(A,z)=1$. Hence it remains to consider 
\[
	z\in(\inter\gamma\setminus\mbdy A)\cap\Bigl(\clos{\bigcup_j\gamma_j}\setminus\bigcup_j\gamma_j\Bigr).
\]
By~\cite[Theorem~6.2]{Mat95} we may suppose in addition that 
\begin{equation}\label{eq:dens-gammaj-0}
	\Theta(\gamma\cup\bigcup_j\gamma_j,z)=0.
\end{equation}

Assume for contradiction that $D(A,z)=0$. This implies that $D(\bigcup_j\inter\gamma_j,z)=1$.
We may suppose $z=0$.
Let $\eps\in(0,\frac{1}{3})$. Then there exists an $r>0$ such that the cube $C=(-r,r)^2$ satisfies
$\Hm[2](C\cap\bigcup_j\inter\gamma_j)>(4-\eps)r^2$ and $\Hm[1](\clos{C}\cap\bigcup_j\gamma_j)<\eps r$.
Hence there exists an $N\in\NN$ such that
\begin{equation}\label{eq:dens-paradox-box}
	\Hm[2](C\cap\bigcup_{j=1}^N\inter\gamma_j)>(4-\eps)r^2\quad\text{and}\quad \Hm[1](\clos{C}\cap\bigcup_{j=1}^N\gamma_j)<\eps r.
\end{equation}
Set $K :=\clos{C}\cap\bigcup_{j=1}^N\gamma_j$ and let $P_1$ be the projection onto the first component. Then $\Hm[1](P_1K)<\eps r$ and for $t\in(-r,r)\setminus P_1K$ one either has 
\[
	\{t\}\times(-r,r)\subset\bigcup_{j=1}^N\inter\gamma_j\quad\text{or}\quad \{t\}\times(-r,r)\cap\bigcup_{j=1}^N\inter\gamma_j=\emptyset. 
\]
If one would have $\{t\}\times(-r,r)\cap\bigcup_{j=1}^N\inter\gamma_j=\emptyset$ for all $t\in(0,\frac{r}{2})\setminus P_1K$, then
\[
	(4-\eps)r^2<\Hm[2]\Bigl(C\cap\bigcup_{j=1}^N\inter\gamma_j\Bigr)\le 4r^2 - 2r(\tfrac{1}{2}-\eps)r,
\]
 which is impossible.
Hence there exists a $t_1\in(0,\frac{r}{2})$ such that $\{t_1\}\times(-r,r)\subset \bigcup_{j=1}^N\inter\gamma_j$. We may suppose that $\{t_1\}\times(-r,r)\subset \inter\gamma_1$.
Let $t_0:=\sup\{t\in[0,t_1] : z=(0,t)\in\gamma_1\}$. Then $t_0\in(0,t_1)$ and $z_0=(t_0,0)\in\gamma_1$. Moreover, as $\{t_1\}\times(-r,r)\subset \inter\gamma_1$, there is a $z_1\in\gamma_1\cap\partial C$. Clearly $\dist(z_1,z_2)>\frac{r}{2}$ and thus $\Hm[1](\clos{C}\cap\gamma_1)>\frac{r}{2}$ by~\cite[Lemma~3.4]{Fal86}, which contradicts~\eqref{eq:dens-paradox-box}.
Consequently the assumption was incorrect and we instead have $D(A,z)=1$.
We have shown that $D(\Omega,z)\in\{\frac{1}{2},1\}$ for $\Hm[1]$-a.e.\ $z\in\partial\Omega$.
\end{proof}

\begin{figure}
\centering
\includegraphics{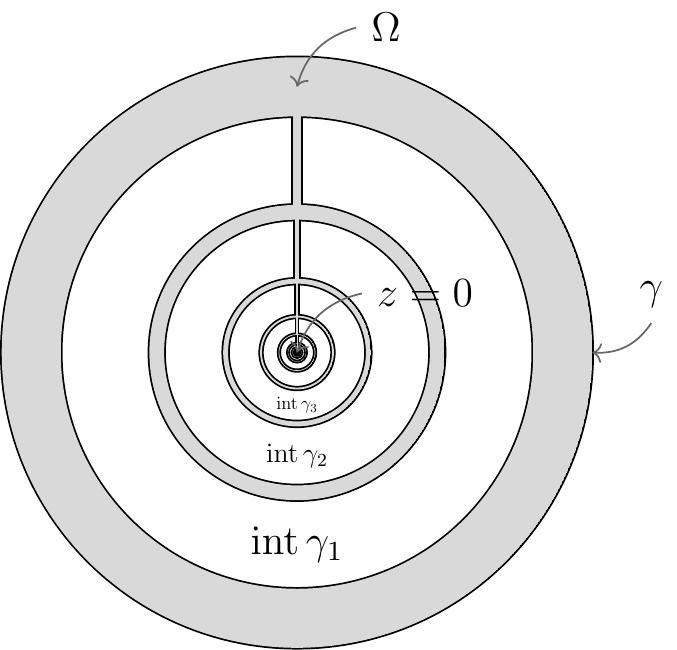}
\caption{A connected domain $\Omega\subset\RR^2$ such that, for a suitable choice of parameters, $\Hm[1](\partial\Omega)<\infty$, $z=0$ is in~$\partial\Omega\cap\clos{\bigcup_j\gamma_j}\setminus\bigcup_j\gamma_j$ and $D(\Omega,z)=0$.}
\label{fig:con-dens-0}
\end{figure}
The domain in Figure~\ref{fig:con-dens-0} shows that $\Omega$ can have density $0$ at a limit point of $\bigcup_j\gamma_j$ in $\inter\gamma$, but then~\eqref{eq:dens-gammaj-0} cannot be satisfied at that point.

The following is now an immediate consequence of Theorems~\ref{thm:dense-Gloc-2dim} and~\ref{thm:sing-dens-0}.
\begin{corollary}\label{cor:uniq-two-dim}
Let $1\le p\le\infty$ and $\Omega\subset\RR^2$ be open and connected. 
Then the approximative trace in $\Wone$ is unique.
\end{corollary}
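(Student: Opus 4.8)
The plan is to read this off directly from the density dichotomy in Theorem~\ref{thm:dense-Gloc-2dim} together with the measure geometric criterion of Theorem~\ref{thm:sing-dens-0}. First I would recall that, by Corollary~\ref{cor:Gamma_s} and the observation immediately following it, the approximative trace in $\Wone$ is unique precisely when $\Hm[1](\Gamma_\sing^p)=0$, and that $\Gamma_\sing^p\subset\Gamma_\loc$ holds by construction. The case $p=\infty$ is already settled by Remark~\ref{rem:unique-pinf}, so I would fix $1\le p<\infty$ from now on.

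The heart of the matter is then a single application of the two theorems. Since $\Omega\subset\RR^2$ is open and connected, Theorem~\ref{thm:dense-Gloc-2dim} gives $D(\Omega,z)\in\{\tfrac12,1\}$ for $\Hm[1]$-a.e.\ $z\in\Gamma_\loc$; in particular $\overline{D}(\Omega,z)>0$ for $\Hm[1]$-a.e.\ $z\in\Gamma_\loc$. Feeding this into Theorem~\ref{thm:sing-dens-0} with $A=\Gamma_\loc$ yields that the approximative trace is unique on $\Gamma_\loc$, that is, $\Hm[1](\Gamma_\loc\cap\Gamma_\sing^p)=0$. Because $\Gamma_\sing^p\subset\Gamma_\loc$, this is the same as $\Hm[1](\Gamma_\sing^p)=0$, which is exactly the assertion.

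There is essentially no obstacle remaining at the level of this corollary: all of the work has been pushed into Theorem~\ref{thm:dense-Gloc-2dim}, whose proof rests on the structure theory for planar sets of finite perimeter from~\cite{ACMM2001}, and into the criterion Theorem~\ref{thm:sing-dens-0}. It is worth emphasising where the hypotheses are genuinely used. Connectedness is essential --- for a disconnected planar open set the density-zero part $\Gamma_\loc^0$ of the boundary can have positive $\Hm[1]$-measure and carry a nontrivial $Z^p$, as the example in~\cite{AW03} shows --- and so is the restriction to the plane, since Theorem~\ref{thm:dense-Gloc-2dim} invokes the two-dimensional structure result. The only thing one actually checks here is that the hypothesis $\overline{D}(\Omega,z)>0$ of Theorem~\ref{thm:sing-dens-0} is met $\Hm[1]$-a.e.\ on the whole of $\Gamma_\loc$, which is precisely what Theorem~\ref{thm:dense-Gloc-2dim} provides.
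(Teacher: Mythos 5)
Your proposal is correct and follows exactly the route the paper takes: the corollary is deduced as an immediate consequence of Theorem~\ref{thm:dense-Gloc-2dim} (which gives $D(\Omega,z)\in\{\tfrac12,1\}$ for $\Hm[1]$-a.e.\ $z\in\Gamma_\loc$) combined with the criterion of Theorem~\ref{thm:sing-dens-0} applied to $A=\Gamma_\loc$. Your additional bookkeeping (the case $p=\infty$ via Remark~\ref{rem:unique-pinf} and the inclusion $\Gamma_\sing^p\subset\Gamma_\loc$) is accurate and consistent with the paper.
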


So in~\cite[Examples~4.2 and~4.3]{AW03} or~\cite[Section 3, last paragraph on p.\,941]{BG2010} it was essential to work in (at
least) three dimensions in order to be able to construct a connected domain with nonunique approximative trace.
Moreover, Theorem~\ref{thm:dense-Gloc-2dim} is of independent interest since it applies in a wider context.
For example, also in~\cite[Example~1]{BK2010} three dimensions are necessary for a connected example.

\section{Examples and applications}\label{sec:ex-app}

It is easy to see that for $d\ge 2$ there are examples of open sets $\Omega\subset\RR^d$ such that
the approximative trace is not unique in $\Wone$ for all $p\in[1,\infty)$ with $\Gamma_\sing^p$ being independent of $p$.
For $d\ge 3$ one can in addition require that $\Omega$ is connected. 
In fact, it suffices to consider an $\Omega$ as in Figure~\ref{fig:forest} with radii that decrease suitably quickly or a domain as in~\cite[Example~4.4]{AtE2011:DtN}.

We next present an example of a quasi-convex, connected, open set $\Omega\subset\RR^3$ where 
the uniqueness of the approximative trace in $\Wone$ depends on the value of $p$.
More precisely, in this example there exists a $p_0\in (1,\infty)$ such that the approximative trace is unique for $p>p_0$ and not unique for $p<p_0$.
We need the following notation and proposition from~\cite{BS2001}.
Let $\Omega\subsetneq\RR^d$ be open and connected. Let $\alpha\in(0,1]$. For all $x,y\in\Omega$ one defines
\[
    d_{\alpha,\Omega}(x,y) := \inf_\gamma \int_\gamma \bigl(\dist(z,\partial\Omega)\bigr)^{\alpha-1}\dx[z],
\]
where the infimum is taken over all rectifiable curves in $\Omega$ connecting $x$ and $y$.
This defines a metric with respect to which elements of $\Wone$ are uniformly continuous for a suitable $p=p(\alpha)>d$.
The case $\alpha=1$ corresponds to $p=\infty$, in which case this boils down to the well-known fact that elements in $\Wone[\infty]$ are uniformly continuous with respect to the geodesic distance in $\Omega$.

\begin{proposition}[cf.~{\cite[Theorem~3.2]{BS2001}}]\label{prop:buck-stan}
Let $\Omega\subsetneq\RR^d$ be open and connected. Let $p>d$ and $\alpha := \frac{p-d}{p-1}\in (0,1)$. Then there exists a constant $C=C(d,p)$ such that for all $u\in\Wone$ and $x,y\in\Omega$ one has
\begin{equation}\label{eq:buck-stan}
    \abs{u(x)-u(y)}\le C \bigl(d_{\alpha,\Omega}(x,y)+\abs{x-y}^\alpha\bigr)^{(p-1)/p}\norm{\nabla u}_{L^p(\Omega)}.
\end{equation}
\end{proposition}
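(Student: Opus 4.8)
The plan is to reconstruct the chaining argument behind~\cite[Theorem~3.2]{BS2001}. The only analytic input needed is the local form of Morrey's inequality: for $p>d$ there is a constant $C_0=C_0(d,p)$ such that for every ball $B(z,R)\subset\RR^d$, every $v\in\Wone[p][B(z,R)]$ and almost every $a,b\in B(z,R/2)$,
\[
    \abs{v(a)-v(b)}\le C_0\,\abs{a-b}^{1-d/p}\,\norm{\nabla v}_{L^p(B(z,R))}.
\]
In particular every $u\in\Wone$ has a representative that is continuous on $\Omega$, so both sides of~\eqref{eq:buck-stan} are meaningful. Write $r(z):=\dist(z,\partial\Omega)$, a positive $1$-Lipschitz function since $\Omega\subsetneq\RR^d$. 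First I would dispose of the trivial case $\abs{x-y}<\tfrac12\max\{r(x),r(y)\}$: then $x,y$ lie in a concentric half-ball of a ball contained in $\Omega$, so the displayed inequality applies directly and gives~\eqref{eq:buck-stan} via the identity $1-d/p=\alpha\tfrac{p-1}{p}$. From now on I would therefore assume $\abs{x-y}\ge\tfrac12 r(x)$ and $\abs{x-y}\ge\tfrac12 r(y)$.

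Next I would fix a rectifiable curve $\gamma\colon[0,\ell]\to\Omega$ from $x$ to $y$, parametrised by arc length, and build a greedy chain $x=x_0,x_1,\dots,x_N=y$ along it by letting $t_{i+1}$ be the last parameter with $\abs{\gamma(t_{i+1})-\gamma(t_i)}\le\tfrac14 r(\gamma(t_i))$, stopping once this reaches $\ell$, and setting $x_i=\gamma(t_i)$. Since $\gamma([0,\ell])$ is compact in $\Omega$ one has $r\ge c>0$ on it, so the chain is finite and every step but the last is \emph{full}, i.e.\ $\abs{x_i-x_{i+1}}=\tfrac14 r(x_i)$; moreover $B_i:=B(x_i,\tfrac12 r(x_i))\subset\Omega$ and $x_i,x_{i+1}\in\tfrac12 B_i$. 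Applying the displayed Morrey inequality on each $B_i$, summing along the chain, and then applying Hölder's inequality with exponents $p$ and $p'=\tfrac{p}{p-1}$ — using crucially that $(1-d/p)p'=\alpha$ — yields
\[
    \abs{u(x)-u(y)}\le C_0\Bigl(\sum_i\norm{\nabla u}_{L^p(B_i)}^p\Bigr)^{1/p}\Bigl(\sum_i\abs{x_i-x_{i+1}}^{\alpha}\Bigr)^{(p-1)/p}.
\]

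For the second factor I would compare $\abs{x_i-x_{i+1}}^{\alpha}$ with the weighted length of the corresponding sub-arc $\gamma_i$: on $\gamma_i$ one has $\tfrac34 r(x_i)\le r\le\tfrac54 r(x_i)$ while the Euclidean length of $\gamma_i$ is at least $\abs{x_i-x_{i+1}}$, so $\int_{\gamma_i}r^{\alpha-1}\ge c_1 r(x_i)^{\alpha-1}\abs{x_i-x_{i+1}}\ge c_2\abs{x_i-x_{i+1}}^{\alpha}$ for full steps (here $\alpha-1<0$ is used). The last, possibly partial, step satisfies $\abs{x_{N-1}-y}\le\tfrac14 r(x_{N-1})\le\tfrac13 r(y)\le\tfrac23\abs{x-y}$, so it contributes at most $\abs{x-y}^{\alpha}$. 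Summing and taking the infimum over $\gamma$ gives $\sum_i\abs{x_i-x_{i+1}}^{\alpha}\le C\bigl(d_{\alpha,\Omega}(x,y)+\abs{x-y}^{\alpha}\bigr)$, which is exactly the shape required.

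The remaining — and genuinely technical — point is to bound the first factor by $\norm{\nabla u}_{L^p(\Omega)}$, i.e.\ to show that the balls $B_i$ have overlap bounded by a dimensional constant $N_0$, so that $\sum_i\norm{\nabla u}_{L^p(B_i)}^p\le N_0\norm{\nabla u}_{L^p(\Omega)}^p$. This is where I expect the main obstacle: consecutive radii are comparable (they change by at most a factor $\tfrac54$ per step), but a curve may revisit a region, so the raw chain need not have bounded overlap. Following~\cite{BS2001}, I would remedy this by a Besicovitch-type sparsification: pass from $\{B_i\}$ to a subfamily that is still a chain from $x$ to $y$ with comparable consecutive radii — so that the estimates of the previous two paragraphs are unaffected — but whose overlap is controlled by $d$ alone. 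Combining the two factors then gives~\eqref{eq:buck-stan}.
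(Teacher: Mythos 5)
The paper does not prove this proposition at all: it is imported verbatim from Buckley--Stanoyevitch (the ``cf.~[BS2001, Theorem~3.2]'' in the header is the entire justification), so what you have written is a reconstruction of an external proof rather than an alternative to anything in the paper. Your skeleton is the right one, and the exponent bookkeeping is correct: local Morrey with exponent $1-d/p=\alpha\frac{p-1}{p}$, a chain of Whitney-type balls along a near-optimal curve, H\"older with $(1-d/p)\frac{p}{p-1}=\alpha$, disjointness of the parameter subintervals to convert $\sum_i\abs{x_i-x_{i+1}}^{\alpha}$ into $\int_\gamma\dist(\cdot,\partial\Omega)^{\alpha-1}$ plus the $\abs{x-y}^{\alpha}$ contribution of the final partial step. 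One small slip: with $t_{i+1}$ defined as the \emph{last} parameter at distance $\le\frac14 r(x_i)$ from $x_i$, the subarc $\gamma_i$ need not stay in the ball, so the asserted two-sided bound $\frac34 r(x_i)\le r\le\frac54 r(x_i)$ on all of $\gamma_i$ is false; you should take the \emph{first} exit time (or restrict the weighted-length lower bound to the initial portion of $\gamma_i$ up to its first exit, which is all the estimate needs).

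The genuine gap is the bounded-overlap claim for the family $\{B_i\}$, which you correctly identify as the crux and then dispose of with an unspecified ``Besicovitch-type sparsification''. This cannot be waved at: the raw chain can revisit a region arbitrarily often, the $\ell^p$--$\ell^{p'}$ H\"older step is genuinely needed (bounding each $\norm{\nabla u}_{L^p(B_i)}$ by $\norm{\nabla u}_{L^p(\Omega)}$ fails because $t\mapsto t^{1/p'}$ is superadditive the wrong way), and a naive Besicovitch covering statement does not by itself return a \emph{chain}. The repair is a greedy shortcutting: note first that any point $z$ lies only in balls $B_i$ with $r(x_i)$ comparable to $\dist(z,\partial\Omega)$; then pass from $B_{i}$ to the ball of largest index meeting it, so that non-consecutive selected balls are disjoint and the overlap is at most $2$. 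For the shortcut pairs to remain within Morrey range of a ball contained in $\Omega$ you must run the whole construction with a smaller step size $\eps\, r(x_i)$ (your factor $\frac14$ is too large, since two intersecting balls of radius $\frac12 r(x_i)$ can have centres at distance about $2r(x_i)$), and you must check that the second factor does not increase, which follows from the subadditivity $\bigl(\sum_k a_k\bigr)^{\alpha}\le\sum_k a_k^{\alpha}$ for $\alpha\in(0,1]$. None of this is in your write-up, and without it the first factor is not bounded by $C\norm{\nabla u}_{L^p(\Omega)}$, so as it stands the proof is incomplete at exactly the step that carries the technical weight of~\cite[Theorem~3.2]{BS2001}.
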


\begin{figure}
\centering
\includegraphics{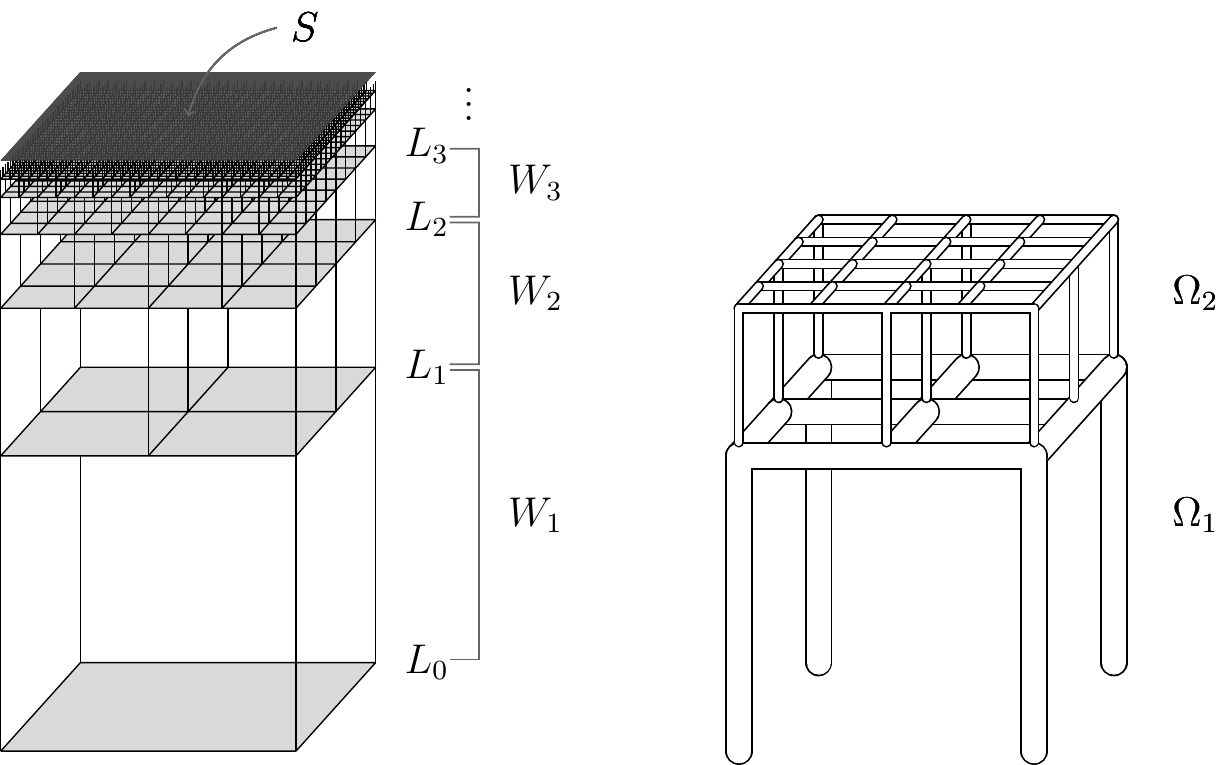}
\caption{On the left the wireframe structure $W$ is depicted, while on the right the union of $\Omega_1=W_1+B(0,r_1)$ and $\Omega_2=W_2+B(0,r_2)$ is shown.}
\label{fig:p-dep-wireframe}
\end{figure}

\begin{example}\label{ex:p-dep}
In order to construct the desired domain $\Omega$ we start by describing an auxiliary wireframe structure $W$ in $\RR^3$
that is depicted in Figure~\ref{fig:p-dep-wireframe}.
Informally,
the wireframe structure $W$ is composed of refining dyadic grids in countably many layers stacked on top of each other with connections from the grid points of one layer to the corresponding grid points of the following layer.

We think of the $z$ direction as pointing upward
and start with the edges of an axis-aligned unit cube in $\RR^3$. The plane containing the bottom face of the unit cube (say $z=0$) we consider as the zero-th layer,
the plane containing the top face ($z=1$) as the first layer. We add to the wireframe the edges of four cubes with side length $\frac{1}{2}$ that rest in a regular axis-aligned grid on the top face of the original unit cube. The plane containing the top faces of these four cubes is the second layer.
In the next step we add the edges of $16$ cubes with side length $\frac{1}{4}$ resting on the second layer.
We continue this construction to obtain the wireframe structure $W\subset\RR^3$ as the union of all the arising edges.
Note that the height of the structure is $\sum_{k=0}^\infty2^{-k}=2$ and that $\clos{W}$ contains the closed unit square $S$ above the wireframe at height $2$.
The intersection of $W$ with the $N$th layer is a regular grid composed of $4^{N}$ squares with side length $2^{-N}$.
The grid on layer $N-1$ is connected to that on layer $N$ by $(2^{N-1}+1)^2$ vertical lines of length $2^{-(N-1)}$ connecting the corresponding grid points. The wireframe structure $W$ is depicted in Figure~\ref{fig:p-dep-wireframe}.

We define $L_N := \sum_{k=0}^{N-1} 2^{-k}=2(1-2^{-N})$ for all $N\in\NN_0$ and
set $W_N := \{ p=(x,y,z)\in W : L_{N-1}<z\le L_N\}$
for all $N\in\NN$.

Next we construct $\Omega\subset\RR^3$ by suitably `blowing up' the set $W\cap\{(x,y,z):z> 0\}$.
Let a sequence $(r_k)_{k\in\NN}$ in $(0,\infty)$ be given that decreases sufficiently quickly.
A suitable choice for $(r_k)$ will be specified later.
We set $\Omega_N := W_N + B(0,r_N)$ and $\Omega := \bigcup_{N\in\NN} \Omega_N$.
Clearly $\Omega_N$ and $\Omega$ are connected open sets. It can readily be verified that $\Omega$ is quasi-convex,
 i.e.\ the intrinsic geodesic distance in $\Omega$ is comparable with the Euclidean distance.

As $\Hm[1](W_N)= 2(2^N+1) + 2^{-(N-1)}(2^{N-1}+1)^2$ (which grows like $2^N$), we obtain for a suitable $C>0$ the estimate
\[
    \Hm[2](\partial\Omega) \le \Hm[2](S) + C \sum_{k=1}^\infty 2^k r_k.
\]
To ensure that $\Hm[2](\partial\Omega)<\infty$, we require 
\begin{equation}\label{eq:rk-finite-Hm}
    \sum_{k=1}^\infty 2^k r_k<\infty.
\end{equation}
The volume of $\Omega_N$ is of the order $2^N r_N^2$. 
Obviously~\eqref{eq:rk-finite-Hm} implies that $\meas{\Omega_N}\to 0$ as $N\to\infty$. Moreover,
it is straightforward to check that $\Omega$ has density $0$ at $S$.

For all $N\in\NN$ let $u_N\in \Wone\cap\Cbar$ be such that
\[
    u_N(x,y,z) = \begin{cases}
    0 & \text{for $z\le L_{N-1}+\frac{1}{4} 2^{-(N-1)}$,} \\
    1 & \text{for $z\ge L_{N-1}+\frac{3}{4} 2^{-(N-1)}$,} \\
    2^N(z-L_{N-1}-\frac{1}{4}2^{-(N-1)}) & \text{otherwise.}
    \end{cases}
\]
Then
\[
    \int_\Omega \abs{D_z u_N}^p = (2^{N-1}+1)^2\int_0^{2^{-N}} 2^{pN} \pi r_N^2\dx[t] \sim 2^{(p+1)N}r_N^2.
\]
If $p\ge 1$ and $(r_k)$ are such that 
\begin{equation}\label{eq:rk-nonunique}
    2^{(p+1)N}r_N^2\to 0
\end{equation} 
as $N\to 0$, it follows that
$S\subset \Gamma_\sing^p$ (up to an $\Hm$-nullset). In particular, in this case the approximative trace is not unique in $\Wone$.

We now show that the approximative trace is unique for suitably chosen $(r_k)$ provided $p>3$ is sufficiently large.
First note that $\Omega$ has strictly positive density
everywhere on the rectifiable (and, if fact, locally Lipschitz) boundary $\partial\Omega\setminus S$. 
So by Proposition~\ref{prop:trace-unique-rect} it suffices to control the approximative trace on $S$.
Fix $z\in S$.
For all $N\in\NN$ let $x_N$ be one of the grid points in the $N$th layer of $W$ with minimal Euclidean distance to $z$.
Clearly $\abs{x_N-z}\to 0$ as $N\to\infty$. 
We show that $(x_N)$ is Cauchy with respect to $d_{\alpha,\Omega}$ for $\alpha := \frac{p-3}{p-1}$ provided
\begin{equation}\label{eq:rk-dist-bound}
    \sum_{k=1}^\infty 2^{-k}r_k^{\alpha-1}<\infty.
\end{equation}
Observe that
\[
    d_{\alpha,\Omega}(x_{N-1},x_N)
    \le 2\int_0^{2^{-(N-1)}}r_N^{\alpha-1} \sim 2^{-N}r_N^{-2/(p-1)}
\]
by taking the obvious Euclidean geodesic from $x_{N-1}$ to $x_N$ in $W$.
So the distances $d_{\alpha,\Omega}(x_{N-1},x_N)$ are summable over $N$ if~\eqref{eq:rk-dist-bound} holds, in which case, by estimating telescopic sums, it follows that $(x_N)$ is Cauchy with respect to $d_{\alpha,\Omega}$. 

Suppose now that $(x_N)$ is Cauchy with respect to $d_{\alpha,\Omega}$ and that $(u_n)$ is a sequence in $\Wone\cap\Cbar$ such that $u_n\to 0$ in $\Wone$. As $p>d=3$, one has $u_n(x)\to0$ for all $x\in \Omega$.
For the uniqueness
of the approximative trace in $\Wone$, it suffices to show that $\abs{u_n(z)}\to 0$ as $n\to\infty$.
One has
\begin{align*}
    \abs{u_n(z)-0} &= \lim_{k\to\infty}\abs{u_n(x_k)} \\
        &\le \lim_{k\to\infty}\abs{u_n(x_k)-u_n(x_l)} + \abs{u_n(x_l)} \\
        &\le \limsup_{k\to\infty}C\bigl(d_{\alpha,\Omega}(x_k,x_l)+\abs{x_k-x_l}^\alpha\bigr)^{(p-1)/p} + \abs{u_n(x_l)},
\end{align*}
where we used~\eqref{eq:buck-stan} in the last step and the constant $C>0$ does not depend on $n$, $k$ and $l$.
Let $\eps>0$. As $(x_N)$ is Cauchy with respect to $d_{\alpha,\Omega}$ and convergent with respect to the Euclidean
distance, we can choose $n_0\in\NN$ such that the term with the limes superior is less than $\eps$ for all $l\ge n_0$.
Because $u_n(x_l)\to 0$ as $n\to\infty$, we may assume that $\abs{u_n(x_l)}<\eps$ for all $n\ge n_0$.
This proves that $u_n(z)\to 0$ as $n\to\infty$ for all $z\in S$.

Finally, suppose that $r_k=2^{-ck}\frac{1}{k^2}$. Then for~\eqref{eq:rk-finite-Hm} it suffices to require $c\ge 1$.
By~\eqref{eq:rk-nonunique} it follows that the approximative trace is not unique if $p\le 2c-1$.
Moreover, by~\eqref{eq:rk-dist-bound} and the above, the approximative trace is unique if $p>2c+1$.
In combination with Lemma~\ref{lem:Gsing-incl}, 
this shows that there exists a $p_0\in[2c-1,2c+1]$ such that the approximative trace is unique in $\Wone$
for $p>p_0$ and not unique for $p<p_0$. Note that if one chooses $c>2$ then one certainly has $p_0>3$.
\end{example}

\begin{remark}\leavevmode
\begin{asparaenum}[(a)]
\item
It is clear that Example~\ref{ex:p-dep} can be adapted to the case $d\ge 3$.
However, we neither know the exact value of $p_0$ in Example~\ref{ex:p-dep}, nor what happens at $p_0$.
We showed that $\Gamma_\sing^p=\emptyset$ if $p\in(p_0,\infty]$ and $\Gamma_\sing^p=S$ if $p\in[1,p_0-2)$ (up to $\Hm[2]$-nullsets), but we did not prove that $\Gamma_\sing^p\in\{\emptyset,S\}$ for all $p\in[1,\infty]$, even though this seems likely.

\item
It would be interesting to be able to pinpoint any given $p_0$, in particular for the case $p_0\in[1,d]$.
Observe that our arguments only locate $p_0$ within some interval.
In particular, for $d=3$ we cannot guarantee that $p_0\in[1,3)$ 
since we need $p>3$ for Proposition~\ref{prop:buck-stan}.

\item
While it is easily seen that in dimension one the approximative trace is always unique,
it might be interesting to investigate whether the uniqueness of the approximative trace is $p$-dependent in two dimensions.
Note that for our argument in Example~\ref{ex:p-dep} it was essential that $\Omega$ was connected.
However, Corollary~\ref{cor:uniq-two-dim} implies that in two dimensions the approximative trace is always unique if $\Omega\subset\RR^2$ is connected.

\item
The example shows that, at least for $p$ sufficiently large, the approximative trace can be unique in $\Wone$, even though $\Omega$ has density $0$ on a part of $\Gamma_\loc$ with positive Hausdorff measure.
We point out that $\Omega$ can have density zero at $\Gamma_\nloc$ without leading to nonuniqueness of the approximative trace, see Figure~\ref{fig:wiggle} or a suitably modified version of~\cite[Example~2.5.5]{Bie2005:thesis}.
\end{asparaenum}
\end{remark}

Thanks to Example~\ref{ex:p-dep} we can formulate the next proposition.
\begin{proposition}
Let $\Omega\subset\RR^d$ be open. Then in general the relatively $p$-polar subsets of $\partial\Omega$ modulo $\Hm$ depend on $p\in(1,\infty)$. In particular, it is possible that $\partial\Omega$ is $p$-admissible for some $p\in(1,\infty)$, but not for others.
\end{proposition}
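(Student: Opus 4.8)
The plan is to read off everything from Example~\ref{ex:p-dep}. Fix the domain $\Omega\subset\RR^3$ constructed there with the parameters $r_k=2^{-ck}k^{-2}$ and $c>2$, so that there is a threshold $p_0>3$ with $\Gamma^p_\sing=S$ modulo $\Hm[2]$ for $p\in[1,p_0-2)$ and $\Gamma^p_\sing=\emptyset$ modulo $\Hm[2]$ for $p\in(p_0,\infty]$, where $S$ is the unit square sitting above the wireframe and $\Hm[2](S)>0$. The first thing I would record is the elementary observation that, since $\Hm[2](\partial\Omega)<\infty$ for this $\Omega$, every boundary point has a ball around it of finite $\Hm[2]$-measure, so $\partial\Omega=\Gamma_\loc$; in particular the notions ``relatively $p$-polar subset of $\partial\Omega$'' and ``$p$-admissibility of $\partial\Omega$'' are exactly the notions for the set $\Gamma_\loc$ to which Theorem~\ref{thm:char-Xs-cap} applies.

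For $p\in(1,p_0-2)$ (note $p_0-2>2c-3>1$ and $p>3>1$ are consistent with the hypotheses) I would invoke Theorem~\ref{thm:char-Xs-cap} to obtain a relatively $p$-polar Borel set $S^{(p)}\subset\Gamma_\loc$ with $Z^p=\Loneloc[S^{(p)}]$, whence $\Hm(S^{(p)}\symdiff\Gamma^p_\sing)=0$ and therefore $\Hm(S^{(p)}\symdiff S)=0$. In particular $\Hm(S^{(p)})=\Hm[2](S)>0$. This shows at once that $S$ is relatively $p$-polar modulo $\Hm$ for such $p$, and that $\partial\Omega=\Gamma_\loc$ is \emph{not} $p$-admissible: applying the definition of $p$-admissibility to the Borel set $A:=S^{(p)}\subset\Gamma_\loc$ with $\rcap(S^{(p)})=0$ would force $\Hm(\Gamma_\loc\cap S^{(p)})=\Hm(S^{(p)})=0$, a contradiction.

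For $p>p_0$ I would argue the opposite way. Here $\Gamma^p_\sing=\emptyset$ modulo $\Hm$, so the empty set is a relatively $p$-polar representative of $\Gamma^p_\sing$, and the ``moreover'' part of Theorem~\ref{thm:char-Xs-cap} (with $S'=\emptyset$, for which $\Hm(\emptyset\symdiff S)=\Hm(\Gamma^p_\sing)=0$) yields that $\Gamma_\loc\setminus\emptyset=\Gamma_\loc=\partial\Omega$ is $p$-admissible. Consequently no Borel subset $S'\subset\partial\Omega$ with $\rcap(S')=0$ can have $\Hm(S')>0$, by monotonicity of $\rcap$ and the definition of $p$-admissibility; so $\partial\Omega$ is $p$-admissible, and $S$ is not relatively $p$-polar modulo $\Hm$ for $p>p_0$ (otherwise some $S'$ with $\Hm(S\symdiff S')=0$, hence $\Hm(S'\cap\partial\Omega)\ge\Hm[2](S)>0$, would have $\rcap(S')=0$). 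Combining the two regimes with $p\in(1,p_0-2)$ and $p\in(p_0,\infty)$ gives the proposition. I expect the only real subtlety to be keeping careful track of the two slightly different ``modulo $\Hm$'' readings above; the genuine mathematical work is already contained in Example~\ref{ex:p-dep} (in particular in the uniform continuity estimate of Proposition~\ref{prop:buck-stan} and in the choice of $(r_k)$), while the step carried out here is just bookkeeping with $p$-admissibility and Theorem~\ref{thm:char-Xs-cap}.
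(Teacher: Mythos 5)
Your proposal is correct and follows exactly the route the paper intends: the paper states this proposition without proof, offering only the remark that it follows from Example~\ref{ex:p-dep}, and the bookkeeping you supply via Theorem~\ref{thm:char-Xs-cap} (a relatively $p$-polar representative of $\Gamma^p_\sing$ of positive $\Hm$-measure for small $p$, versus $p$-admissibility of $\Gamma_\loc$ when $\Gamma^p_\sing$ is $\Hm$-null for large $p$) is precisely the intended argument. The auxiliary observations you make, in particular that $\Gamma_\nloc=\emptyset$ here so that $\partial\Omega=\Gamma_\loc$, are correct and complete the deduction.
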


We next use Example~\ref{ex:p-dep} to address the following question, which was raised for $p=2$ in the last paragraph of~\cite[Section~4]{AtE2011:DtN}:
If $\Hm(\partial\Omega)<\infty$ and every element $u\in\Wt$ has an approximative trace in $Y=\Lp[p][\partial\Omega]$, is then the approximative trace unique on $\Omega$? Here we ask for convergence in $\Lp[p][\partial\Omega]$ on the boundary.

\begin{example}\label{ex:p-conj-ate}
We use the notation from Example~\ref{ex:p-dep}. We suppose that $p>3$ and $p\le 2c-1$. In particular, $c>2$. So the approximative trace in $\Loneloc$ is not unique and $\Gamma_\sing^p=S$. The sequence $(u_N)$ constructed in Example~\ref{ex:p-dep} also shows that the approximative trace in $\Lp[q][\partial\Omega]$ is not unique for all $q\in[1,\infty)$.

We shall show that for suitable $c$, $p$ and $q$ every element of $\Wone$ has an approximative trace in $\Lp[q][\partial\Omega]$.
Observe that every $u\in\Wone$ has a continuous representative in $C(\Omega)$ that has a (unique) continuous extension to $\partial\Omega\setminus S$ since $\Omega$ has Lipschitz boundary locally at every point of $\partial\Omega\setminus S$.
Fix $u\in\Wone$.
By considering $u(1-u_N)$ instead of $u$, we only need to show that $u$ has an approximative trace in $\Lp[q][\partial\Omega\setminus S]$.
Hence it suffices to show that the continuous extension of $u$ to $\partial\Omega\setminus S$ (in the following also denoted by $u$) is in $\Lp[q][\partial\Omega\setminus S]$.

Let $S_N := \partial\Omega_N$. Then $\Hm[2](S_N)$ is of the order $2^N r_N$.
Fix $N\in\NN$ with $N\ge 2$ and $z\in S_N$. We want an estimate of $\abs{u(z)}$ based on the values of $u$ in $\Omega_1$.
Let $w\in W\cap\clos{\Omega_N}$ such that $\abs{z-w}$ is minimal. Let $y$ be a grid point in the $N$th layer of $W$ such that $\abs{w-y}$ is minimal. Let $x$ be a grid point in the $0$th layer of $W$ such that $\abs{y-x}$ is minimal.
Now
\begin{align*}
    d_{\alpha,\Omega}(x,z) &\le d_{\alpha,\Omega}(x,y) + d_{\alpha,\Omega}(y,w) + d_{\alpha,\Omega}(w,z) \\
        &\lesssim \sum_{k=1}^N 2^{-k} r_k^{\alpha-1} + 2^{-N} r_N^{\alpha-1} + \int_0^{r_N} t^{\alpha-1}\dx[t] \\
        &\lesssim N2^{-N}r_N^{\alpha-1} = N2^{-N} r_N^{-2/(p-1)}.
\end{align*}
Although $2^{-N}r_N^{\alpha-1}$ tends to $\infty$ as $N\to\infty$, the previous estimate allows to bound the values of $u$ on $S_N$ and hence their growth towards $S$. 
In fact, by~\eqref{eq:buck-stan} we obtain
\begin{align*}
    \abs{u(x)-u(z)} &\le C (d_{\alpha,\Omega}(x,z) + \sqrt{6})^{(p-1)/p} \\
        &\lesssim N 2^{-\frac{p-1}{p}N} r_N^{-2/p},
\end{align*}
where we can omit the constant additive term since the estimate for $d_{\alpha,\Omega}(x,z)$ dominates.
By truncating $u$ appropriately, we may without loss of generality assume that $u=0$ on $\Omega_1$ and therefore $u(x)=0$. 
Hence
\[
    \abs{u(z)}^q \lesssim N^q 2^{-\frac{(p-1)q}{p}N} r_N^{-2q/p}
\]
for all $z\in S_N$, where the constant is uniform in $z$ and independent of $N$.
We obtain
\begin{align*}
    \int_{\partial\Omega\setminus S} \abs{u(z)}^q\dx[\Hm[2](z)]
      &\lesssim \sum_{k=1}^\infty \Hm[2](S_k) k^q 2^{-\frac{(p-1)q}{p}k} r_k^{-2q/p} \\
      &\lesssim \sum_{k=1}^\infty k^q 2^{\bigl(1-\frac{(p-1)q}{p}\bigr)k} r_k^{1-\frac{2q}{p}}.
\end{align*}
So the approximative trace of $u$ is in $\Lp[q][\partial\Omega\setminus S]$ if
\[
    (2c-p+1)q < (c-1)p.
\]
For $q=p$ this is satisfied if $c+2<p$. So if we choose $c=5$ and $p=9$, then the approximative trace is not unique in $\Wone$, but every $u\in\Wone$ has an approximative trace in $\Lp[p][\partial\Omega]$. In fact, for $c=5$ and $p=9$ the above shows that
the approximative traces restricted to $\partial\Omega\setminus S$ are in $\Lp[q][\partial\Omega\setminus S]$ for all $q\in[1,18)$.
\end{example}

\begin{remark}
While Example~\ref{ex:p-conj-ate} does not give a counterexample to the exact question raised in the last paragraph of~\cite[Section~4]{AtE2011:DtN}, i.e.\ in the setting $p=q=2$, it indicates that an affirmative answer to this question is not to be expected.
\end{remark}

We briefly compare the approximative trace to Maz$'$ya and Burago's rough trace for elements of $\BV(\Omega)$.
For the required details we refer to~\cite[Section~9.5.1, Theorem~9.5.4 and Theorem~9.6.2]{Maz2011}.
A closely related notion is considered in~\cite[Section~5.10 and Theorem~5.10.7]{Ziemer1989}.
More recently, the rough trace was studied for more general domains with rectifiable boundary in~\cite{BK2010}, and an extension of Theorem~\ref{thm:mazbur} below can be found in~\cite[Theorems~2 and~6]{BK2010}.

\begin{definition}
Let $\Omega\subset\RR^d$ be open. For $u\in\BV(\Omega)$ and $z\in\mbdy\Omega$ define the \emphdef{rough trace} of $u$ in $z$ by
\[
    u^*(z) := \sup\{t\in\RR : z\in\mbdy[u>t]\}.
\]
\end{definition}
In~\cite[Section~9.5.1]{Maz2011} the rough trace is defined slightly differently with respect to the reduced boundary instead of the measure theoretic boundary. This is inconsequential for the following theorem.
It is possible to characterise the boundedness in $\Lone[\partial\Omega]$ of the rough trace under the assumption that $\Hm(\partial\Omega)<\infty$ and $\Hm(\partial\Omega\setminus\mbdy\Omega)=0$.
Note, however, that the latter assumption already implies uniqueness of the approximative trace for all $p$ by Proposition~\ref{prop:trace-unique-rect}. 

\begin{theorem}\label{thm:mazbur}
Let $\Omega\subset\RR^d$ be open with $\Hm(\partial\Omega)<\infty$ and $\Hm(\partial\Omega\setminus\mbdy\Omega)=0$. Then there exists a $C>0$ such that
\begin{equation}\label{eq:trace-int}
    \norm{u^*}_{L^1(\partial\Omega)} \le C\norm{u}_{\BV(\Omega)}
\end{equation}
for all $u\in \BV(\Omega)$ if and only if
there exists a $\delta>0$ and $M>0$ such that for all $A\subset\Omega$ with $\diam A<\delta$ one has 
\begin{equation}\label{eq:E-bdy-est}
    \Hm(\mbdy A\cap\partial\Omega)\le M\Hm(\mbdy A\cap\Omega).
\end{equation}
Moreover, if these equivalent conditions hold, then
\[
    u^*(z) = \lim_{r\to 0+} \frac{1}{\meas{B(z,r)\cap\Omega}} \int_{B(z,r)\cap\Omega} u(y)\dx[y]
\]
for $\Hm$-a.e.\ $z\in\partial\Omega$.
\end{theorem}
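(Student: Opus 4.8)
The plan is to follow the route of Maz$'$ya and Burago: reduce both sides of~\eqref{eq:trace-int} to integrals over $t\in\RR$ of quantities attached to the superlevel sets $[u>t]$, using the coarea formula in the interior and a Fubini argument on the boundary, and then match them level by level. The first and most delicate step is a \emph{level-set description of the rough trace}: I claim that for every $u\in\BV(\Omega)$ and $\Hm$-a.e.\ $z\in\partial\Omega$ one has
\[
   z\in\mbdy[u>t]\iff\overline D([u>t],z)>0,
\]
and consequently $u^*(z)=\sup\{t\in\RR:\overline D([u>t],z)>0\}$, so that $\one\{z\in\mbdy[u>t]\}=\one\{t<u^*(z)\}$ for $(\Hm\otimes\mathcal{L}^1)$-a.e.\ $(z,t)\in\partial\Omega\times\RR$. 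This rests on two observations: since $t\mapsto\one_{[u>t]}$ is monotone, the upper and lower densities of $[u>t]$ at a fixed point are monotone in $t$, with $\underline D([u>t],z)\le\underline D(\Omega,z)$ for all $t$; and for $\Hm$-a.e.\ $z\in\partial\Omega$ one has $z\in\mbdy\Omega$ by hypothesis, hence $\underline D(\Omega,z)<1$, which forces $D([u>t],z)\ne1$ and reduces membership in $\mbdy[u>t]$ to positivity of the upper density. Combined with the coarea formula $\abs{Du}(\Omega)=\int_\RR P([u>t],\Omega)\dx[t]=\int_\RR\Hm(\mbdy[u>t]\cap\Omega)\dx[t]$ and the fact that $\norm{u}_{L^1(\Omega)}$ controls $\int_\RR\abs{[u>t]}\dx[t]$ (splitting at $t=0$), Fubini turns~\eqref{eq:trace-int} into an assertion about $\{\Hm(\mbdy[u>t]\cap\partial\Omega)\}_{t\in\RR}$ versus $\{P([u>t],\Omega)\}_{t\in\RR}$.

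For the implication~\eqref{eq:E-bdy-est}$\Rightarrow$\eqref{eq:trace-int} I would first treat $u$ with $\diam(\supp u)<\delta$. Then the sets $[u>t]$ for $t>0$ and $[u\le t]=\Omega\setminus[u>t]$ for $t<0$ all have diameter $<\delta$, so~\eqref{eq:E-bdy-est} applied to each (using $P([u\le t],\Omega)=P([u>t],\Omega)$) gives $\Hm(\mbdy[u>t]\cap\partial\Omega)\le M\,P([u>t],\Omega)$ and the analogue for $[u\le t]$ for a.e.\ $t$; integrating and using the previous step together with coarea yields $\norm{u^*}_{L^1(\partial\Omega)}\le M\abs{Du}(\Omega)$. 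The general case follows by a partition of unity: cover $\partial\Omega$ by balls $B_i$ of radius $\delta/4$ centred at a maximal $\delta/8$-separated subset of $\partial\Omega$, so that the cover has bounded overlap $N=N(d)$, and pick $\phi_i\in\Cinfc[B_i]$ with $\phi_i\ge0$, $\sum_i\phi_i\equiv1$ near $\partial\Omega$ and $\norm{\nabla\phi_i}_\infty\lesssim\delta^{-1}$. From $\{u>t\}\subset\bigcup_i\{\phi_iu>t/N\}$ for $t>0$ (and the analogue for $-u$) one obtains $\abs{u^*}\le N\sum_i\abs{(\phi_iu)^*}$ $\Hm$-a.e.\ on $\partial\Omega$, while $\abs{D(\phi_iu)}(\Omega)\le\abs{Du}(\Omega\cap B_i)+c\delta^{-1}\norm{u}_{L^1(\Omega\cap B_i)}$ by the Leibniz rule; summing over $i$ and using bounded overlap gives~\eqref{eq:trace-int} with $C=C(d,M,\delta)$.

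For the converse I would test~\eqref{eq:trace-int} on $u=\one_A$ for an arbitrary set $A\subset\Omega$ of finite perimeter: the level-set description gives $u^*=\one_{\mbdy A}$ $\Hm$-a.e.\ on $\partial\Omega$, so~\eqref{eq:trace-int} becomes $\Hm(\mbdy A\cap\partial\Omega)\le C\bigl(\abs{A}+\Hm(\mbdy A\cap\Omega)\bigr)$. If moreover $\diam A<\delta$, then $\abs{A}\le c_d(\diam A)^{d}$ and the isoperimetric inequality $\abs{A}^{(d-1)/d}\le c_d'\Hm(\mbdy A)$ combine to $\abs{A}\le c_d''\,\delta\,\Hm(\mbdy A)=c_d''\,\delta\,\bigl(\Hm(\mbdy A\cap\Omega)+\Hm(\mbdy A\cap\partial\Omega)\bigr)$, the last term being finite by $\Hm(\partial\Omega)<\infty$; choosing $\delta$ so small that $Cc_d''\delta\le\tfrac12$ and absorbing yields~\eqref{eq:E-bdy-est} with $M=3C$. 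Finally, for the \emph{moreover}: at $\Hm$-a.e.\ $z\in\partial\Omega$ the set $\Omega$ has density $\tfrac12$ and an approximate tangent hyperplane, and~\eqref{eq:E-bdy-est} supplies a relative isoperimetric (hence Poincar\'e-type) inequality on $B(z,r)\cap\Omega$ controlling the oscillation of $r\mapsto\meas{B(z,r)\cap\Omega}^{-1}\int_{B(z,r)\cap\Omega}u$ over dyadic radii; a telescoping argument shows this average converges, and by the level-set description its limit must equal $u^*(z)$.

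The main obstacle is the first step — passing from the pointwise definition of $u^*$ through the measure theoretic boundaries $\mbdy[u>t]$ to the Fubini-able statement $\one\{z\in\mbdy[u>t]\}=\one\{t<u^*(z)\}$, which is exactly where the hypothesis $\Hm(\partial\Omega\setminus\mbdy\Omega)=0$ enters — together with the relative Poincar\'e estimate underlying the \emph{moreover}. The whole scheme closely parallels~\cite[Theorems~9.5.4 and~9.6.2]{Maz2011}; the only genuinely new points over the classical formulation are the use of the measure theoretic rather than the reduced boundary, which costs nothing since $\Hm(\partial\Omega)<\infty$ makes the two coincide $\Hm$-a.e.\ on $\partial\Omega$, and the passage from Maz$'$ya's smooth test sets to arbitrary $A\subset\Omega$ in~\eqref{eq:E-bdy-est}, handled by a standard smooth approximation of sets of finite perimeter.
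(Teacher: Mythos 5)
The paper does not actually prove Theorem~\ref{thm:mazbur}: it is quoted from Maz$'$ya \cite[Section~9.5.1, Theorems~9.5.4 and~9.6.2]{Maz2011}, with only the remark that replacing the reduced boundary by the measure theoretic boundary is inconsequential. So there is no in-paper argument to compare yours against; what you have written is a reconstruction of the cited proof. For the equivalence of \eqref{eq:trace-int} and \eqref{eq:E-bdy-est} your outline is sound and is indeed the Maz$'$ya--Burago route: the observation that $\Hm$-a.e.\ $z\in\partial\Omega$ lies in $\mbdy\Omega$ (so $\underline{D}(\Omega,z)<1$, membership in $\mbdy[u>t]$ reduces to $\overline{D}([u>t],z)>0$, and $t\mapsto\one\{z\in\mbdy[u>t]\}$ becomes the indicator of a half-line ending at $u^*(z)$) is correct and is exactly where the hypothesis $\Hm(\partial\Omega\setminus\mbdy\Omega)=0$ enters; the coarea/layer-cake reduction, the bounded-overlap partition of unity, and the test with $u=\one_A$ followed by absorption of $\abs{A}$ via the isoperimetric inequality all go through essentially as you describe. (For the negative levels note that what you need is a bound on $\Hm(\{u^*\le t\})=\Hm(\partial\Omega)-\Hm(\mbdy[u>t]\cap\partial\Omega)$, which you get from $\Hm(\mbdy[u\le t]\cap\partial\Omega)$ using that a.e.\ boundary point has density $\tfrac12$; this is presumably what ``the analogue for $[u\le t]$'' is meant to cover, but it deserves a line.)

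The genuine gap is the final clause. Your one-sentence sketch for $u^*(z)=\lim_{r\to0+}\meas{B(z,r)\cap\Omega}^{-1}\int_{B(z,r)\cap\Omega}u$ asserts that \eqref{eq:E-bdy-est} ``supplies a relative isoperimetric (hence Poincar\'e-type) inequality on $B(z,r)\cap\Omega$'', but it does not do so directly: \eqref{eq:E-bdy-est} controls $\Hm(\mbdy A\cap\partial\Omega)$ by $\Hm(\mbdy A\cap\Omega)$, i.e.\ by the perimeter of $A$ relative to all of $\Omega$, which for $A\subset B(z,r)\cap\Omega$ includes the part of $\mbdy A$ lying on $\partial B(z,r)\cap\Omega$; a Poincar\'e inequality on the half-ball requires control by $P(A,B(z,r)\cap\Omega)$ alone, and bridging that difference is where the work lies. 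Moreover, neither the convergence of the averages (your ``telescoping argument'') nor the identification of the limit with $u^*(z)$ is actually argued; this is precisely the nontrivial content of \cite[Theorem~9.6.2]{Maz2011}. As it stands the last assertion of the theorem is not established by your outline; you should either carry out that argument in detail or cite it, as the paper does.
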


So if $\Hm(\partial\Omega)<\infty$, $\Hm(\partial\Omega\setminus\mbdy\Omega)=0$ and~\eqref{eq:E-bdy-est} holds for $\Omega$, then every element of $\smash{\Wt[1]}$ has a unique approximative trace in $L^1(\partial\Omega)$ by~\eqref{eq:trace-int} as one has $\norm{u}_{\BV(\Omega)}=\norm{u}_{1,1}$ and $\restrict{u}{\partial\Omega}=u^*$ for all $\smash{u\in\Wone[1]\cap\Cbar}$.
In~\cite[Theorem~1.3]{AtE2011:DtN} there is a version of~\eqref{eq:trace-int} for the case $p=2$.

We finally point out two applications for the results obtained in this paper.
In~\cite[Section~4, Robin boundary conditions]{DD2009} it is of interest when the approximative trace with values in $L^p(\Gamma)$ in $\Wone$ is unique, in which case they call the domain $\Omega$ admissible.
The present paper provides geometric criteria for when this is the case.

Moreover, the results and techniques developed in this paper will be helpful for the program
suggested in~\cite[Section~6]{BD2010:alt-fk} to extend their proof of the Faber--Krahn inequality for the Robin Laplacian to general domains, where the Robin Laplacian on general domains is defined as in~\cite{Dan2000:robin-bvp} and~\cite{AW03}.
While the validity of the Faber--Krahn inequality has been established more generally in~\cite{BG2010}, we strongly expect that the approximative trace considered here is the appropriate notion for the setting of~\cite{BD2010:alt-fk}.

\subsection*{Acknowledgements}
The author would like to thank Tom ter~Elst and Wolfgang Arendt for their support and interest in this project.

\end{document}